\documentclass[11pt]{amsart}
\usepackage[T1]{fontenc}
\usepackage[a4paper,margin=2.5cm]{geometry}
\usepackage{mathtools,amssymb}
\usepackage{amsmath}
\usepackage{mathrsfs}
\usepackage{bm}
\usepackage{soul}
\usepackage{tikz}
\usetikzlibrary{arrows.meta}
\usepackage{epsfig}
\usepackage{enumitem}
\allowdisplaybreaks
\numberwithin{equation}{section}
\usepackage[most]{tcolorbox}
\usepackage{xcolor}

\usepackage{graphicx}
\usepackage{booktabs}
\graphicspath{{figures/}}

\theoremstyle{plain}
\newtheorem{theorem}{Theorem}[section]
\newtheorem{lemma}[theorem]{Lemma}
\newtheorem{proposition}[theorem]{Proposition}
\newtheorem{corollary}[theorem]{Corollary}

\theoremstyle{definition}
\newtheorem{definition}[theorem]{Definition}
\newtheorem{assumption}[theorem]{Assumption}

\theoremstyle{remark}
\newtheorem{remark}[theorem]{Remark}

\newcommand{\R}{\mathbb{R}}

\newcommand{\Om}{\Omega}

\newcommand{\Mplus}{\mathcal{M}^{+}_{\lambda,\Lambda}}
\newcommand{\Mminus}{\mathcal{M}^{-}_{\lambda,\Lambda}}
\newcommand{\Sn}{\mathbb{S}^{N}}
\newcommand{\OmP}{\Omega^{+}}
\newcommand{\OmM}{\Omega^{-}}
\newcommand{\OmZ}{\Omega^{0}}

\newcommand{\norm}[1]{\left\lVert #1\right\rVert}

\newcommand{\pos}[1]{\left(#1\right)_{+}}
\newcommand{\negpart}[1]{\left(#1\right)_{-}}
\newcommand{\Ipar}{\mathcal{I}}
\newcommand{\NtildeP}{\widetilde N_{+}}
\newcommand{\psafe}{p_{\mathrm{S}}^{+}}
\newcommand{\NCL}{\widetilde N_{\mathrm{CL}}}
\newcommand{\pCL}{p_{\mathrm{CL}}^{+}}
\newcommand{\pHS}{p_{\mathrm{HS}}^{+}} 

\usepackage[dvipsnames]{xcolor}
\newif\ifdraftnotes
\draftnotestrue

\usepackage{cite}
\usepackage[hidelinks]{hyperref}
\usepackage[nameinlink,capitalize]{cleveref}

\begin{document}

\title[Weighted Liouville theorems and a priori bounds]
{Weighted Liouville Theorems and A Priori Bounds for Indefinite
Superlinear Equations with Pucci's Maximal Operator}

\author[K. Rathore]{Karan Rathore}
\address{%
Department of Mathematics,
Visvesvaraya National Institute of Technology,
Nagpur 440010, India}
\email{%
ds23mth004@students.vnit.ac.in;
rathorekaran020@gmail.com}

\author[M. Mallick]{Mohan Mallick}
\address{%
Department of Mathematics,
Visvesvaraya National Institute of Technology,
Nagpur 440010, India}
\email{%
mohanmallick@mth.vnit.ac.in;
mohan.math09@gmail.com}
\thanks{Corresponding author: Mohan Mallick.}

\author[R. B. Verma]{Ram Baran Verma}
\address{%
Department of Mathematics,
SRM University AP,
Amaravati, Andhra Pradesh 522502, India}
\email{%
rambaran.v@srmap.edu.in;
rambv88@gmail.com}

\subjclass[2020]{Primary 35J60, 35B53; Secondary 35B45, 35D40}
\keywords{Pucci extremal operator, indefinite weight, Liouville theorem,
a priori estimate, blow-up method, viscosity solution}
\date{\today}


\begin{abstract}
Consider 
\begin{equation*}\
\left\{
\begin{aligned}
-\Mplus(D^{2}u)&=\mu u+a(x)u^{p}\quad\text{in}~\Omega,\\
u&=0\quad\text{on}~~\partial\Omega,
\end{aligned}
\right.
\end{equation*}
where $\Om$ is a smooth bounded domain, $a$ is a continuous function which is allowed to change the sign. Moreover, set of zeros of $a$ may have non-empty interior. Main objective of this article is to establish the uniform bound on the set of positive solutions of above problem. This approach requires a suitable Liouville type theorem. We first prove the Liouville type theorem for $-\Mplus(D^{2}v)=h(x_{1})f(v),$ where  \(h\) is a strictly increasing one-dimensional weight and \(f\) is positive and
nondecreasing.

\end{abstract}

\maketitle
\section{Introduction}\label{sec:introduction}
This article considers the following problem
\begin{equation}\label{eq:problem1}
\begin{cases}
-\Mplus(D^{2}u)=\mu u+a(x)u^{p} & \text{in }\Om,\\
u>0 & \text{in }\Om, \\
u=0 & \text{on }\partial\Om,
\end{cases}
\end{equation}
where \(\mu\in\R\), \(\Om\subset\R^{N}\), \(N\ge 2\), $\Mplus$ is a Pucci's operator and $a$ is a continuous function which is allowed to change the sign. Moreover, the set of zeros can have non-empty interior. As we know the existence of multiple positive solutions is obtained by applying the Degree theory from non-linear analysis. One of the main steps is to establish the uniform bound on the set of positive solutions, see\cite{BerestyckiCapuzzoDolcettaNirenberg1994,AmannLopezGomez1998,Du2004}.Main objective of this article is to establish uniform a prior estimate for positive solutions of \eqref{eq:problem1}. It has been accomplished by following the standard blow up argument introduced in \cite{MR619749}. This argument require a Liouville type theorem for the corresponding blow up equation. In the case where $a$ is positive and domain is convex the corresponding blow up equation is turned out to be 
\[-\Mplus(D^{2}u)=u^{p}~~\text{in}~\mathbb{R}^{N}.\]
For appropriate choice of $p$ Liouville type theorem theorem for this problem has been established in \cite{CutriLeoni2000}. If the domain is not convex and blow up sequence of points converge to boundary then we get similar equation in the half space, see\cite{QuaasSirakov2006}. However, if $a$ changes sign then along with the above result we also need the Liouville type result for 
\[-\Mplus(D^{2}v)=h(x_{1})f(v)\qquad\text{in}~~\R^{N},\]
under suitable assumptions on $h$ and $f.$\\
Liouville type theorem for such problems in the context of semilinear elliptic equation first of all has been established in \cite{BerestyckiCapuzzoDolcettaNirenberg1994,berestycki1995variational}. The proofs in these articles heavily rely on variational techniques. In fact, variational methods exploit the principal spectral structure of the cone through a positive homogeneous harmonic weight. The resulting energy estimates, together with their scaling properties, determine the critical exponent for the Liouville-type nonexistence result. In the context of non-variational form elliptic equations these problems has been studied in \cite{AmannLopezGomez1998}, where authors use the harnack inequality. We allow the zero set of \(a\) to be nontrivial and, under the configurations described in \cref{gem_a}, to contain regions with nonempty interior. Thus the superlinear term acts as a source on
\(
\OmP:=\{x\in\Om:a(x)>0\},
\)
as an absorption term on
\(
\OmM:=\{x\in\Om:a(x)<0\},
\)
and vanishes on
\(
\OmZ:=\{x\in\Om:a(x)=0\}.
\)
The change of sign removes the sign-definite structure available for positive weights and creates additional concentration mechanisms near the interfaces of \(\OmZ\). The resulting question is whether all positive solutions of \cref{eq:problem1} remain uniformly bounded when \(\mu\) ranges over a fixed compact subset of its admissible range.

The paper has two main components. We first establish a nonlinear weighted Liouville theorem for bounded nonnegative viscosity solutions \(v\in C^2(\R^N)\) of
\[
-\Mplus(D^{2}v)=h(x_{1})f(v)\qquad\text{in }\R^{N}.
\]
Under suitable regularity and monotonicity assumptions on \(h\) and \(f\), every nontrivial solution is strictly increasing in the \(x_{1}\)-direction. If, in addition, \(h(t)\to+\infty\) as \(t\to+\infty\), then no nontrivial bounded nonnegative entire solution exists. We then apply this Liouville theorem to obtain a uniform \(L^\infty\) a priori estimate for the positive solutions of \cref{eq:problem1}, under structural assumptions on the positive, negative, and zero regions of \(a\) and power-type descriptions of its behavior near the relevant interfaces. The connection between the two results appears in the blow-up analysis: whereas concentration inside a region on which \(a\) has a fixed sign produces a constant-coefficient limiting equation, concentration near a zero or sign-changing interface preserves the local vanishing profile of \(a\) and produces precisely a weighted entire equation. Excluding this interface limit is the central role of the weighted Liouville theorem.

For semilinear equations, the relation between indefinite problems and
nonlinear Liouville theorems was developed by Berestycki,
Capuzzo-Dolcetta, and Nirenberg
\cite{BerestyckiCapuzzoDolcettaNirenberg1994}; see also the indefinite
Liouville results of Zhu \cite{Zhu1999}. Amann and L\'opez-G\'omez
treated a broad nonvariational class of superlinear indefinite boundary
value problems, allowing the weight to vanish on sets of positive
measure, and combined a priori estimates with fixed-point and
bifurcation methods \cite{AmannLopezGomez1998}. Moving-plane methods
were used by Chen and Li to sharpen estimates for semilinear equations
with indefinite coefficients \cite{ChenLi2003}. The closest antecedent
to the present Liouville problem is the work of Du and Li
\cite{DuLi2005}. They proved nonlinear Liouville theorems for bounded
nonnegative solutions of
\[
 -\Delta v=h(x_{1})f(v)\quad\text{in }\R^{N}
\]
and used them to obtain a priori estimates in cases in which a thick
zero region or a power-type interface profile prevents reduction to a
constant-coefficient equation.

The fundamental connection between a priori bounds and Liouville-type nonexistence theorems on the entire space was pioneered in the semilinear setting by Gidas and Spruck \cite{MR619749} using blow-up analysis. For semilinear elliptic equations with variable coefficients, the relation between indefinite, sign-changing weights and positive solutions was systematically developed on bounded domains by Alama and Tarantello \cite{AlamaTarantello1993}, who established that when the parameter $\lambda$ exceeds the first eigenvalue $\lambda_1(\Omega)$, the negative part of the weight acts as a crucial stabilizing force. This variational and bifurcation lineage was subsequently expanded by Alama and Tarantello \cite{MR1414377} to investigate competing power-type terms of the form $-\Delta u = \lambda u + k(x)u^q - h(x)u^p$ (with $1 < q < p$). To bypass traditional odd-symmetry requirements on the nonlinearity, Alama and Del Pino \cite{AlamaDelPino1996} deployed infinite-dimensional Morse theory and non-symmetric linking techniques, utilizing critical group computations to characterize existence and multiplicity in highly indefinite variational structures \cite{AlamaDelPino1996}.\\
Concurrently, topological and variational methods on bounded domains were refined to establish the exact nodal (sign-changing) properties of solutions and relax global growth restrictions, see Li and Wang \cite{LiWang2000}.\\
The extension of these indefinite semilinear problems to the unbounded domain $\mathbb{R}^N$ introduced severe analytical hurdles due to the loss of standard Sobolev compactness. Costa and Tehrani \cite{CostaTehrani2001} treated the noncoercive entire space problem $-\Delta u - \lambda h(x)u = a(x)g(u)$ in $\mathbb{R}^N$ by requiring the sign-changing weight to be strictly negative at infinity ($\lim_{|x|\to\infty} a(x) < 0$) \cite{CostaTehrani2001}. However, Du \cite{Du2004} identified a fundamental structural barrier in unbounded domains: for $\lambda > 0$, positive solutions do not decay to zero at infinity but instead approach a positive constant, placing them entirely outside the standard variational space $D^{1,2}(\mathbb{R}^N)$ \cite{Du2004}. To bypass this lack of compactness and the fact that $0$ is a non-simple eigenvalue on $\mathbb{R}^N$, Du developed a domain-approximation framework on expanding balls \cite{Du2004}. By obtaining uniform, domain-independent a priori bounds and splitting the approximating global bifurcation branches into minimal and unbounded non-minimal components, Du recovered the complete bounded-domain multiplicity diagram on the entire space $\mathbb{R}^N$ \cite{Du2004}.\\
In the context of critical Sobolev exponent growth, Chang-Shou Lin \cite{Lin1998} established a fundamental Liouville-type nonexistence theorem for the scalar curvature equation $\Delta u + K(x) u^{\frac{n+2}{n-2}} = 0$ in $\mathbb{R}^n$ under a gradient nondegeneracy condition on the homogeneous coefficient $K(x)$ \cite{Lin1998}.  On the other hand, the limits of spectral sufficiency under non-coercive potentials were explored in the Neumann setting by Tehrani \cite{Tehrani1996}.\\
For semilinear equations, the relation between indefinite problems and nonlinear Liouville theorems was further developed by Berestycki, Capuzzo-Dolcetta, and Nirenberg \cite{BerestyckiCapuzzoDolcettaNirenberg1994}; see also the indefinite Liouville results of Zhu \cite{Zhu1999}. Amann and López-Gómez \cite{AmannLopezGomez1998} treated a broad nonvariational class of superlinear indefinite boundary value problems, allowing the weight to vanish on sets of positive measure, and combined a priori estimates with fixed-point and bifurcation methods \cite{AmannLopezGomez1998}. Moving-plane methods were used by Chen and Li \cite{ChenLi2003} to sharpen estimates for semilinear equations with indefinite coefficients. The closest antecedent to the present Liouville problem is the work of Du and Li \cite{DuLi2005}. They proved nonlinear Liouville theorems for bounded nonnegative solutions of
\[
-\Delta v = h(x_1)f(v) \quad \text{in } \mathbb{R}^N
\]
and used them to obtain a priori estimates in cases in which a thick zero region or a power-type interface profile prevents reduction to a constant-coefficient equation. \\

Barrios,
Del Pezzo, Garc\'ia-Meli\'an, and Quaas obtained a corresponding
indefinite Liouville theorem for the fractional Laplacian and applied it
to a priori bounds and existence
\cite{BarriosDelPezzoGarciaMelianQuaas2017}.
In another direction, the global weak-Harnack and quantitative Hopf
method of Sirakov \cite{Sirakov2020}, combined with comparison ideas
from the indefinite theory, led L\'opez-G\'omez and Sampedro to broad
semilinear a priori bounds which, within their setting and for
\(N\ge3\), do not require a prescribed decay rate of the potential near
its zero set \cite{LopezGomezSampedro2025}. Consequently, the
power-type interface assumption used here with a direct description of the 
structural hypothesis adapted to our fully nonlinear blow-up argument,
not as a generally necessary condition for indefinite elliptic
problems.

The passage from the Laplacian to \(\Mplus\) is not formal. The natural
framework is that of viscosity solutions
\cite{CrandallIshiiLions1992,CaffarelliCabre1995}; there is in general no
variational formulation, and the difference of two solutions does not
satisfy a linear equation. Instead, it is controlled by inequalities
involving both Pucci extremal operators. Moving-plane arguments for
fully nonlinear equations on unbounded domains were developed by Li
\cite{Li1991}, while the spectral theory needed for nonproper Dirichlet
problems involves two principal half-eigenvalues rather than a single
principal eigenvalue \cite{QuaasSirakov2008}.

Liouville theory for fully nonlinear uniformly elliptic equations also
has a substantial independent development. Cutr\`i and Leoni proved
whole-space Liouville properties for nonnegative viscosity
supersolutions \cite{CutriLeoni2000}. Felmer and Quaas analyzed radial
nonnegative entire solutions of Lane--Emden equations driven by Pucci's
extremal operators and identified operator-dependent critical exponents
\cite{FelmerQuaas2003}. Quaas and Sirakov established a half-space
Liouville theorem for the Pucci Lane--Emden equation and used it in the
study of nonproper superlinear Dirichlet problems
\cite{QuaasSirakov2006}; related half-space inequalities were studied by
Leoni \cite{Leoni2012}. These results provide the constant-coefficient
whole-space and boundary inputs for a blow-up argument, but they do not
by themselves yield the weighted equation generated at a sign/zero
interface,
\begin{equation}\label{eq:intro-weighted-limit}
 -\Mplus(D^{2}v)=h(x_{1})f(v)\quad\text{in }\R^{N}.
\end{equation}
The simultaneous presence of the fully nonlinear operator and the one-directional weight in \cref{eq:intro-weighted-limit} is the main contribution of this article.

Our first result is a weighted Liouville theorem for bounded
nonnegative solutions of \cref{eq:intro-weighted-limit}. Roughly, the
weight \(h\) is nondecreasing, vanishes at the origin, and satisfies a
quantitative compatibility condition there; it may change sign, and the
one-sided profile \(h(t)=t_{+}^{\gamma}\) is also included. The
nonlinearity \(f\) is positive and nondecreasing on \((0,\infty)\), with
the mild behavior at zero specified in Assumption \ref{ass:liouville-data}. We
prove that every nontrivial bounded nonnegative solution is strictly
increasing in the \(x_{1}\)-direction. Moreover, if 
\(h(t)\to+\infty\) as \(t\to+\infty\), then no such nontrivial solution
exists; see \cref{thm:weighted-liouville}.

The proof adapts the moving-plane method
\cite{GidasNiNirenberg1979,Li1991}. Its fully nonlinear feature is that the reflected difference is handled through Pucci inequalities; no additive identity for \(\Mplus\) is used. Compactness of normalized tangential translates prevents loss of contact at infinity.

Our second main result concerns uniform a priori bounds for positive
solutions of \cref{eq:problem1}. Under the geometric assumptions on the positive,
negative, and zero regions of $a$, together with the prescribed power-type
behavior of the weight near the relevant interfaces, we prove that, for
every bounded interval $I\subset\mathbb{R}$, there exists a constant $C>0$ such that $ \|u\|_{L^\infty(\Omega)}\leq C$ for every positive solution of \cref{eq:problem1} with $\mu\in I$, provided that $p$ lies in the exponent range specified in the main a priori estimate.
An important preliminary observation is that the spectral restriction on
$\mu$ is not an additional assumption on $I$. Indeed, the principal
half-eigenvalue argument shows that every positive solution necessarily satisfies $\mu<\sigma_*$ where $\sigma_*$ is determined by the principal positive half-eigenvalues of the relevant components of the zero and nonnegative regions of the weight.

The proof is by contradiction and follows a blow-up analysis adapted to the geometry of the zero set of $a$. Suppose that $\{(\mu_n,u_n)\}$ is a sequence of positive solutions such that $\{\mu_n\}\subset I$ and $\|u_n\|_{L^\infty(\Omega)}
    \longrightarrow +\infty.$
For each $n$, choose $x_n\in\Omega$ such that $\frac{u_n}{\psi}(x_n)=s_n$ where $\psi$ is defined in Lemma \ref{lem:positive-barrier}. The local maximization of $\frac{u_n}{\psi}$ allows the maximum points to be chosen outside the interior of the zero region of $a$. Then we take $u_n(x_n)=M_n,$ after passing to a subsequence, $x_n$ converges to some point of $\overline{\Omega}$, and the remainder of the argument consists in excluding every possible location of this limiting point.

When $x_n$ remains in a compact subset of $\Omega^+$, the usual normalization with scale $\rho_n=M_n^{-\frac{p-1}{2}}$ produces, after passing to a subsequence, a bounded nontrivial solution of a constant-coefficient Pucci equation in $\mathbb{R}^N$. Such a solution is excluded by the corresponding whole-space Liouville theorem. The situation is different when $x_n$ approaches an interface $\Gamma$ at which the positive part of the weight vanishes according to a power law, namely, $ a(x)\sim \alpha(x)d(x,\Gamma)^\gamma.$ In this case the natural blow-up scale is $ \rho_n=M_n^{-\frac{p-1}{2+\gamma}}.$
If the distance from $x_n$ to the interface is comparable with
$\rho_n$, then the rescaled coefficient retains the vanishing profile of $a$, and the limiting problem takes the weighted form
\[
    -\mathcal{M}^{+}_{\lambda,\Lambda}(D^2v)
    =
    h(y_1)v^p
    \qquad \text{in } \mathbb{R}^N.
\]
This is precisely the regime in which the weighted Liouville theorem
proved in \cref{sec:weighted-liouville} enters the a priori argument. On the other hand, if
the distance to the interface is large compared with the blow-up scale,
a secondary rescaling converts the problem into a constant-coefficient
entire equation, which is ruled out by the corresponding unweighted
Liouville theorem given in \cite{CutriLeoni2000}. The same distinction is used for a thin
positive--negative interface, where the limiting weight retains the
signed power profile on the two sides of the interface.
 Concentration near the
interfaces adjacent to the negative region is ruled out using the
corresponding rescaled inequalities and nonexistence arguments.

These alternatives exhaust all possible accumulation points of a
sequence of maximum points. Consequently, an unbounded sequence of
positive solutions cannot occur, and the desired uniform
$L^\infty$ estimate follows. The role of the weighted Liouville theorem
is therefore specific and transparent: it excludes the nonautonomous
entire profiles generated when the blow-up scale is comparable with the
distance to a vanishing or sign-changing interface. Away from such
interfaces, the limiting problems are either constant-coefficient
whole-space or half-space equations, or they are controlled by
comparison in the region where the weight is negative.

As an application of the a priori estimate, the global bifurcation
argument developed in Section~5 yields the existence of positive
solutions. More precisely, the positive continuum bifurcating from $\bigl(\mu_1^+(\Omega),0\bigr)$
cannot remain bounded because of the global bifurcation alternative, while the spectral upper bound and the uniform a priori estimate force its parameter projection to extend indefinitely to the left. Consequently, under the hypotheses guaranteeing these two properties,
the problem admits at least one positive solution for every
$\mu<\mu_1^+(\Omega).$\\

The paper is organized as follows. Section~2 introduces the geometric
and structural hypotheses and states the main results. Section~3
collects the viscosity stability, regularity, comparison, principal
half-eigenvalue, and unweighted Liouville results used later.
Section~4 establishes the weighted Liouville theorem by a moving-plane
argument adapted to Pucci's maximal operator. Section~5 develops the
spectral preliminaries and the localization of maximum points, carries
out the blow-up analysis in the interior, near the zero and
sign-changing interfaces, and at the physical boundary, and then
derives the uniform a priori estimate and the existence result by
global bifurcation.

\section{Setting, structural hypotheses, and main results}
\label{sec:setting}

\subsection{The Dirichlet problem and the solution class}

Let \(N\ge2\), let \(0<\lambda\le\Lambda\), and let
\(\Om\subset\R^N\) be a bounded domain.  We study the positive Dirichlet
problem
\begin{equation}\label{eq:problem}
\begin{cases}
 -\mathcal{M}_{\lambda,\Lambda}^{+}(D^{2}u)=\mu u+a(x)u^{p} & \text{in }\Om,\\
 u>0 & \text{in }\Om,\\
 u=0 & \text{on }\partial\Om,
\end{cases}
\end{equation}
where \(p>1\) and \(\mu\in\R\).  The symbol \(\mu\) is reserved for the
parameter in \cref{eq:problem}; \(\lambda\) and \(\Lambda\) always denote
the ellipticity constants.  We use the sign regions
\begin{equation}\label{eq:sign-regions}
 \OmP:=\{x\in\Om:a(x)>0\},\qquad
 \OmM:=\{x\in\Om:a(x)<0\},\qquad
 \OmZ:=\{x\in\Om:a(x)=0\}.
\end{equation}

Unless explicitly stated otherwise, a solution of \cref{eq:problem} means
a function \(u\in C(\overline\Om)\) satisfying the equation in the
viscosity sense and the boundary condition pointwise.

\begin{assumption}[Domain and global weight]\label{ass:domain}
The domain \(\Om\) is bounded and connected, and \(\partial\Om\) is of
class \(C^2\).  The weight \(a\in C(\overline\Om)\) changes sign; in
particular, both \(\OmP\) and \(\OmM\) are nonempty.  Moreover,
\begin{equation}\label{eq:zero-set-interior}
\left\{
\begin{aligned}
    \OmZ&\Subset\Om\\
 a>0~~&\text{or}~~a<0~~\text{on}~~\partial\Omega.
\end{aligned}
\right.
 \end{equation}
\end{assumption}

\subsection{Geometry of the neutral set}\label{gem_a}

We now state explicitly the geometry of $\Omega^0$ which is defined as
\begin{equation}\label{eq:zero-components}
 \OmZ=Z_{+}\cup\,Z_{-}\cup\,Z_{-}^{+},
\end{equation}
where $Z_{+}$, $Z_{-}$ and $Z_{-}^{+}$ are connected components of $\Omega^{0}$.  The subscripts record which sign region is adjacent to the component; they do not mean that the weight has that sign on the component itself.

\begin{assumption}[Neutral components]\label{ass:geometry}
The interiors \(Z_+^{\circ}\) and \(Z_-^{\circ}\) are nonempty connected
domains and satisfy $\partial Z_+\subset\overline{\OmP}$ and $\partial Z_-\subset\overline{\OmM}$. \\
The separating component \(Z_{-}^{+}\) has one of the following forms.
\begin{enumerate}[label=\textnormal{(G\arabic*)},leftmargin=2.6em]
\item \emph{Thin-interface case:}  If $Z_{-}^{+}$ has empty interior,
  we assume that
\[
Z_{-}^{+}=\Gamma=\overline{\Omega^{+}}\cap\overline{\Omega^{-}},
\]
where $\Gamma$ is a smooth manifold of dimension $N-1$.
 \item \emph{Thick-layer case:  } If $Z_{-}^{+}$ has nonempty interior,
  we assumed that
   \[
      \partial Z_{-}^{+}=\Gamma_+\cup\,\Gamma_-,
   \]
where \(\Gamma_+\subset\overline{\Omega^{+}}\) and \(\Gamma_-\subset\overline{\Omega^{-}}\) are disjoint smooth manifolds of dimension $(N-1).$
   
\end{enumerate}

Let us denote
\begin{equation}\label{eq:filled-positive-domain}
 G_+:=\operatorname{int}
      ( \overline{\OmP\cup Z_+\cup Z_{-}^{+}})
\end{equation}
is assumed to be connected and to have a \(C^2\) boundary.
\end{assumption}
\begin{figure}[ht]
\centering

\begin{tikzpicture}[scale=.75]

\definecolor{myred}{RGB}{220,120,120}
\definecolor{myblue}{RGB}{125,160,210}
\definecolor{myyellow}{RGB}{235,210,115}
\definecolor{mygreen}{RGB}{150,200,155}
\definecolor{mypurple}{RGB}{180,145,210}

\fill[myred]    (0,0) ellipse [x radius=6.0,  y radius=4.5];
\fill[myblue]   (0,0) ellipse [x radius=5.25, y radius=3.90];
\fill[myred]    (0,0) ellipse [x radius=4.40, y radius=3.20];
\fill[myyellow] (0,0) ellipse [x radius=3.45, y radius=2.45];
\fill[mygreen]  (0,0) ellipse [x radius=2.55, y radius=1.80];
\fill[mypurple] (0,0) ellipse [x radius=1.65, y radius=1.15];
\fill[mygreen]  (0,0) ellipse [x radius=0.85, y radius=0.60];

\draw[thick] (0,0) ellipse [x radius=6.0,  y radius=4.5];
\draw[thick] (0,0) ellipse [x radius=5.25, y radius=3.90];
\draw[thick] (0,0) ellipse [x radius=4.40, y radius=3.20];
\draw[thick] (0,0) ellipse [x radius=3.45, y radius=2.45];
\draw[thick] (0,0) ellipse [x radius=2.55, y radius=1.80];
\draw[thick] (0,0) ellipse [x radius=1.65, y radius=1.15];
\draw[thick] (0,0) ellipse [x radius=0.85, y radius=0.60];

\def\ang{55}

\node at ({5.625*cos(\ang)},{4.20*sin(\ang)}) {\small $\Omega^{-}$};
\node at ({4.825*cos(\ang)},{3.55*sin(\ang)}) {\small  $Z_{-}$};
\node at ({3.925*cos(\ang)},{2.825*sin(\ang)}) {\small  $\Omega^{-}$};
\node at ({3.000*cos(\ang)},{2.125*sin(\ang)}) {\small  $Z_{-}^{+}$};
\node at ({2.100*cos(\ang)},{1.475*sin(\ang)}) {\small  $\Omega^{+}$};
\node at ({1.250*cos(\ang)},{0.875*sin(\ang)}) {\small  $Z_{+}$};
\node at (0,0) {\small $\Omega^{+}$};

\end{tikzpicture}

\caption{Schematic representation of the regions
$\Omega^{+}$, $\Omega^{-}$, $Z_{+}$, $Z_{-}$ and $Z_{-}^{+}$.}
\label{fig:concentric-regions}
\end{figure}

\subsection{Power profiles at the interfaces}
For a closed set $\Sigma\subset\overline{\Omega}$, we denote
\[
d_{\Sigma}(x):=\operatorname{dist}(x,\Sigma).
\]
\begin{assumption}\label{ass:profile}[Power profiles at the interfaces]
In the thick-layer case \textup{(G2)}, the following conditions hold.

\begin{enumerate}
    \item[(i)] Near the interface $\Gamma_{+}$, there exist
    $\gamma>0$ and a positive continuous function $\alpha_{1}$,
    defined in a neighborhood of $\Gamma_{+}$, such that
    \[
    a(x)
    =
    \alpha_{1}(x)\,d_{\Gamma_{+}}(x)^{\gamma},
    \qquad
    x\in\Omega^{+}
    \quad\text{near }\Gamma_{+}.
    \]

    \item[(ii)] Near $\partial Z_{+}$, there exist
    $\gamma_{3}>0$ and a positive continuous function $\alpha_{3}$,
    defined in a neighborhood of $\partial Z_{+}$, such that
    \[
    a(x)
    =
    \alpha_{3}(x)\,d_{\partial Z_{+}}(x)^{\gamma_{3}},
    \qquad
    x\in\Omega^{+}
    \quad\text{near }\partial Z_{+}.
    \]

    \item[(iii)] Near $\Gamma_{-}$, there exist constants
    $0<m<M$ and an exponent $\gamma_{4}>0$ such that
    \[
    m
    \leq
    \frac{-a(x)}
    {d_{\Gamma_{-}}(x)^{\gamma_{4}}}
    \leq
    M,
    \qquad
    x\in\Omega^{-}
    \quad\text{near }\Gamma_{-}.
    \]

    \item[(iv)] Near $\partial Z_{-}$, there exist constants
    $0<m<M$ and an exponent $\gamma_{4}>0$ such that
    \[
    m
    \leq
    \frac{-a(x)}
    {d_{\partial Z_{-}}(x)^{\gamma_{4}}}
    \leq
    M,
    \qquad
    x\in\Omega^{-}
    \quad\text{near }\partial Z_{-}.
    \]
\end{enumerate}

In the thin-interface case \textup{(G1)}, there exist positive continuous
functions $\alpha_{1}$ and $\alpha_{2}$, defined in a neighborhood of
$\Gamma$, and exponent $\gamma_{2}>0$ such that
\begin{equation}\label{eq:thin}
    \left\{
    \begin{aligned}
        a(x)&=\alpha_{1}(x)\,d_{\Gamma}(x)^{\gamma_{2}},\qquad x\in\Omega^{+}\quad\text{near }\Gamma,\\
        a(x)&=-\alpha_{2}(x)\,d_{\Gamma}(x)^{\gamma_{2}},
\qquad x\in\Omega^{-}\quad\text{near }\Gamma.
    \end{aligned}
    \right.
\end{equation}

\end{assumption}

\subsection{The weighted entire problem}

The blow-up scale at a vanishing interface leads to
\begin{equation}\label{eq:weighted-entire}
\left\{
\begin{aligned}
     -\Mplus(D^{2}v)&=h(x_1)f(v)
 \quad\text{in }\R^N\\
 v&\ge0\\
 v&\in L^{\infty}(\R^N).
\end{aligned}
\right.
\end{equation}

\begin{assumption}[Data for the weighted problem]
\label{ass:liouville-data}
For some \(\alpha\in(0,1)\), the functions \(h\) and \(f\) satisfy:
\begin{enumerate}[label=\textnormal{(L\arabic*)},leftmargin=2.6em]
 \item \(h\in C_{loc}^{\alpha}(\R)
   \cap C^1(\R\setminus\{0\})\), \(h(0)=0\), and
   \[
      h'(t)\ge0\quad~~\text{for}~~t\ne0,\qquad h'(t)>0\quad~~\text{for}~~t>0.
   \]
 \item The behavior at the origin satisfies
   \[
      \limsup_{t\to0^+}\frac{h(t)}{h'(t)}<\infty.
   \]
 \item \(f\in C^1([0,\infty))\),
   \(f(0)=f'(0)=0\), and
   \[
      f(s)>0,\qquad f'(s)\ge0\qquad~~\text{for}~~s>0.
   \]
 \item The behavior at the origin satisfies
   \[
      \limsup_{s\to0^+}\frac{s f'(s)}{f(s)}<\infty.
   \]
\end{enumerate}
\end{assumption}

\begin{theorem}[Weighted Liouville theorem]
\label{thm:weighted-liouville}
Let the assumption \ref{ass:liouville-data} hold. Then  Every bounded nonnegative viscosity
solution of \cref{eq:weighted-entire} is either identically zero or 
strictly increasing in the \(x_1\); more precisely, in the
nontrivial case,
\[
 v(x_1+\tau,x')>v(x_1,x')
 \qquad\text{for every }\tau>0\text{ and }(x_1,x')\in\R^N.
\]
If, in addition,
\begin{equation}\label{eq:h-growth}
 h(t)\longrightarrow+\infty\qquad\text{as }t\to+\infty,
\end{equation}
then \(v\equiv0\).
\end{theorem}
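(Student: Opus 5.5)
The plan is to run a moving-plane (sliding) argument in the \(x_1\)-direction for the first assertion, and then use an interior barrier for the second. Let \(v\not\equiv0\) be a bounded nonnegative viscosity solution. Since \(f\ge0\) and \(f(0)=0\), and by (L3)–(L4) \(f(s)\le Cs\) near \(0\) with \(f\) Lipschitz on \([0,\|v\|_\infty]\), the function \(v\) satisfies \(-\mathcal{M}^{+}_{\lambda,\Lambda}(D^2v)+c(x)v\ge0\) with \(c\) bounded. The strong maximum principle for Pucci operators then gives \(v>0\) in \(\R^N\).

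For \(\tau>0\) set \(v_\tau(x)=v(x_1+\tau,x')\) and \(w_\tau=v_\tau-v\). From \(\mathcal{M}^{+}(A)-\mathcal{M}^{+}(B)\ge\mathcal{M}^{-}(A-B)\), understood in the viscosity sense using the \(C^2\) regularity of \(v\), I get
\[
-\mathcal{M}^{-}_{\lambda,\Lambda}(D^2w_\tau)\ \ge\ h(x_1+\tau)f(v_\tau)-h(x_1)f(v).
\]
I would split the right-hand side according to the sign of \(h\). Where \(h(x_1)\le0\), write it as \(h(x_1+\tau)\bigl(f(v_\tau)-f(v)\bigr)+\bigl(h(x_1+\tau)-h(x_1)\bigr)f(v)\). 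Where \(h(x_1)>0\), write it as \(h(x_1)\bigl(f(v_\tau)-f(v)\bigr)+\bigl(h(x_1+\tau)-h(x_1)\bigr)f(v_\tau)\). On the set \(\{w_\tau<0\}\) the first decomposition is nonnegative whenever \(h(x_1+\tau)\le0\), so the equation is proper there and no negative minimum can occur. In the positive region the zero-order term \(h(x_1)f'(\xi)w_\tau\) has the wrong sign. It must be dominated by the gain \(\bigl(h(x_1+\tau)-h(x_1)\bigr)f(v_\tau)\ge0\).

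The aim is to show \(w_\tau\ge0\) for all \(\tau>0\). Let \(m=\inf w_\tau<0\) and take \(x^k\) with \(w_\tau(x^k)\to m\). Translate \(v\) in the \(x'\)-variables by \((x^k)'\); the translates are bounded in \(C^{2,\alpha}_{loc}\) by the Pucci regularity theory of Section 3, so they converge along a subsequence to a solution of the same equation, possibly with \(x_1\) also shifted when \(x^k_1\) stays bounded. If \(x^k_1\) stays bounded, the infimum is attained at a finite point \(\bar x\), and I derive a contradiction from the sign analysis above. When \(h(\bar x_1)>0\), I use that \(h\) is strictly increasing there. The ratio conditions (L2) and (L4) say that \(h\) is comparable to \(th'\) and \(f\) to \(sf'\) near \(0\), so
\[
\frac{h(x_1)f'(\xi)|w_\tau|}{\bigl(h(x_1+\tau)-h(x_1)\bigr)f(v_\tau)}
\]
stays bounded. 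I expect to close this by first treating small \(\tau\) and then continuing in \(\tau\), using the largest \(\tau_0\) with \(w_\tau\ge0\) for \(\tau\ge\tau_0\) in the style of Li \cite{Li1991} and Du--Li \cite{DuLi2005}, together with the strong maximum principle for \(w_{\tau_0}\ge0\).

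The cases \(x^k_1\to\pm\infty\) must be handled through the limiting profiles. As \(x_1\to-\infty\), if \(h\) stays negative there, the equation is absorbing and \(v\) decays, so \(w_\tau\) cannot have a negative infimum achieved at \(-\infty\). As \(x_1\to+\infty\) I use the monotone structure of the translated limits. Once \(w_\tau\ge0\) holds, the strong maximum principle applied to \(-\mathcal{M}^{-}(D^2w_\tau)+c\,w_\tau\ge0\) gives \(w_\tau>0\), since \(w_\tau\equiv0\) would make \(v\) periodic in \(x_1\), which is incompatible with the strictly increasing \(h\) and \(f(v)>0\). I expect this step, excluding loss of the minimum at infinity together with controlling the wrong-sign term in \(\{h>0\}\) via (L2) and (L4), to be the main obstacle.

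For the second assertion, assume \(h(t)\to+\infty\). Fix \(x^0\) and set \(c=v(x^0)>0\). By monotonicity, \(v\ge c\) on the half-cylinder \(\{x_1\ge x^0_1,\ |x'-(x^0)'|<1\}\). On the ball \(B=B_1(t e_1+(x^0)')\) with \(t\ge x^0_1+1\) we then have \(-\mathcal{M}^{+}(D^2v)\ge K_t:=f(c)\min_{[t-1,t+1]}h\to\infty\). Compare \(v\) on \(B\) with
\[
\phi(x)=\frac{K_t}{2N\Lambda}\bigl(1-|x-x_B|^2\bigr),
\]
where \(x_B\) is the centre of \(B\). This function satisfies \(-\mathcal{M}^{+}(D^2\phi)=K_t\) and \(\phi=0\le v\) on \(\partial B\). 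The comparison principle gives \(v(x_B)\ge K_t/(2N\Lambda)\to\infty\), which contradicts boundedness. Hence \(v\equiv0\).
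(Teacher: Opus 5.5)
\textbf{The gap is the monotonicity assertion, which carries the whole theorem.} The sliding scheme you outline does not prove it.

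First, there is no starting position. The condition \(w_\tau\ge0\) for all small \(\tau>0\) is equivalent to \(\partial_1v\ge0\), i.e.\ to the claim itself. Once it holds for small \(\tau\) it holds for every \(\tau\) by iteration, so ``continuing in \(\tau\)'' adds nothing. Starting instead from large \(\tau\) requires the behaviour of \(v\) as \(x_1\to\pm\infty\), which you do not have. Your decay argument at \(-\infty\) presupposes \(h<0\) there. But \(h\equiv0\) on \((-\infty,0]\) (e.g.\ \(h=t_+^{\gamma}\)) is admissible, and then \(v\) is merely \(\mathcal{M}^{+}_{\lambda,\Lambda}\)-harmonic in a half-space with no reason to decay. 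The end \(x_1\to+\infty\) is not treated at all.

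Second, at a negative minimum in \(\{h>0\}\) you need the strict inequality \(\bigl(h(x_1+\tau)-h(x_1)\bigr)f(v_\tau)>h(x_1)\bigl(f(v)-f(v_\tau)\bigr)\). A bounded ratio of the two sides only says the loss is comparable to the gain, not smaller. Moreover, (L2) controls \(h/h'\) only near \(t=0\).

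Third, even in the proper region you only get a right-hand side \(\ge0\), not \(>0\). Sliding on all of \(\mathbb{R}^N\) gives neither a set where \(w_\tau\) is known to vanish nor a strict supersolution to stop minimizing sequences from escaping in \(x_1\).

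The paper resolves all of this by reflecting instead of translating. For \(w_\tau=v(2\tau-x_1,x')-v\) on \(\Sigma_\tau\) one has \(w_\tau=0\) on \(T_\tau\). For \(\tau\le0\), \(h(x_1)\le0\) throughout \(\Sigma_\tau\), and the reflected inequality gives \(\mathcal{M}^{-}_{\lambda,\Lambda}(D^2w_\tau)\le0\) on \(\{w_\tau\le0\}\), so the start is free. The logarithmic barrier \(g_\tau\), with \(\mathcal{M}^{+}_{\lambda,\Lambda}(D^2g_\tau)<0\) and \(g_\tau\to\infty\), makes \(w_\tau/g_\tau\) attain its negative infimum, where the strict inequality for \(g_\tau\) gives the contradiction.

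The wrong-sign region is handled by compactness at \(\tau_*<\infty\), not pointwise:
\begin{enumerate}
\item the points where \(w_{\tau_j}\le0\) and \(\mathcal{M}^{-}_{\lambda,\Lambda}(D^2w_{\tau_j})\ge0\) must have \(0<x_1<\tau_j\);
\item strictness and the Hopf lemma at \(\tau_*\) force them to escape tangentially;
\item the tangential translates of \(v\) then tend to \(0\), and a Harnack normalization gives \(|Dv|/v\to0\) along them;
\item only then do (L2) and (L4) make \(t\mapsto h(t)f(v(t,z_j))\) strictly increasing on a fixed bounded interval, so the reflected right-hand side is \(\ell_j(s_j)<\ell_j(\tau_j)=0\).
\end{enumerate}
Some mechanism of this kind is what your proposal is missing.

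By contrast, your argument for \(h\to+\infty\), given monotonicity, is correct and more elementary than the paper's half-eigenvalue comparison on \(B_{1/2}((\sigma+\tfrac12)e_1)\). It needs two small fixes. Take \(c:=\min_{|x'-(x^0)'|\le1}v(x_1^0,x')>0\), since monotonicity only gives \(v(x_1,x')\ge v(x_1^0,x')\). Also, \(-\mathcal{M}^{+}_{\lambda,\Lambda}(D^2\phi)=\tfrac{\lambda}{\Lambda}K_t\), not \(K_t\); this still makes \(\phi\) a subsolution, so \(v(x_B)\ge K_t/(2N\Lambda)\) stands.
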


\begin{corollary}[Power-interface models]\label{cor:power-weight}
Let \(p>1\), \(\gamma>0\), \(c_+>0\), and \(c_-\ge0\).  The only bounded
nonnegative viscosity solution of
\[
 -\Mplus(D^2v)
   =\bigl(c_+\pos{x_1}^{\gamma}
          -c_-\negpart{x_1}^{\gamma}\bigr)v^p
 \quad\text{in }\R^N
\]
is \(v\equiv0\).
\end{corollary}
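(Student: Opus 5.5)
The corollary follows by applying \cref{thm:weighted-liouville} with
\[
h(t):=c_+\pos{t}^{\gamma}-c_-\negpart{t}^{\gamma},\qquad f(s):=s^{p}\quad(s\ge0).
\]
With these choices the equation in the corollary is exactly \cref{eq:weighted-entire}. The plan is to check each hypothesis of Assumption \ref{ass:liouville-data}, then the growth condition \cref{eq:h-growth}, and conclude \(v\equiv0\). No new analysis is needed; the content is the verification.

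\emph{Hypotheses on \(h\).} For (L1), fix \(\alpha:=\min\{\gamma,\tfrac12\}\in(0,1)\).
\begin{itemize}
\item The map \(t\mapsto|t|^{\gamma}\) is \(C^{\alpha}\) on every compact interval: it is Hölder of order \(\min\{\gamma,1\}\ge\alpha\) near \(0\), and smooth away from \(0\). The same holds for \(\pos{t}^{\gamma}\) and \(\negpart{t}^{\gamma}\), since each agrees with \(|t|^{\gamma}\) on one half-line and vanishes on the other. Hence \(h\in C^{\alpha}_{loc}(\R)\).
\item On \(\R\setminus\{0\}\), \(h\) is a power of \(|t|\) times a constant, so it is \(C^1\) there, and \(h(0)=0\).
\item For \(t>0\) we have \(h'(t)=c_+\gamma t^{\gamma-1}>0\), because \(c_+>0\).
\item For \(t<0\) we have \(h(t)=-c_-|t|^{\gamma}\), so \(h'(t)=c_-\gamma|t|^{\gamma-1}\ge0\), because \(c_-\ge0\).
\end{itemize}
For (L2), for \(t>0\) we get \(h(t)/h'(t)=t/\gamma\to0\) as \(t\to0^+\), so the \(\limsup\) is finite. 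Finally, \(h(t)=c_+t^{\gamma}\to+\infty\) as \(t\to+\infty\), which is \cref{eq:h-growth}.

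\emph{Hypotheses on \(f\).} For (L3), since \(p>1\), \(f(s)=s^p\) is \(C^1\) on \([0,\infty)\) with \(f'(s)=ps^{p-1}\). Thus \(f(0)=f'(0)=0\), and \(f(s)>0\), \(f'(s)\ge0\) for \(s>0\). For (L4), \(sf'(s)/f(s)=p\) for \(s>0\), so the \(\limsup\) is \(p<\infty\).

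\emph{Conclusion.} Let \(v\) be a bounded nonnegative viscosity solution of the equation in the corollary. Since \(v\ge0\), we have \(v^p=f(v)\), so \(v\) is a bounded nonnegative viscosity solution of \cref{eq:weighted-entire} with the above \(h\) and \(f\). The second part of \cref{thm:weighted-liouville} then gives \(v\equiv0\).

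The only point needing care is (L1) near \(t=0\) when \(\gamma<1\). There \(h\) is not Lipschitz, and \(h'\) blows up, so the Hölder exponent \(\alpha\) must be chosen at most \(\gamma\), as done above. When \(c_-=0\), \(h\) vanishes identically on \((-\infty,0]\); this is permitted, because (L1) only requires \(h'\ge0\) for \(t<0\) and strict positivity only for \(t>0\).
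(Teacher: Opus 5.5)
Your proposal is correct and follows the route the paper intends: the paper gives no separate proof, treating the corollary as an immediate specialization of \cref{thm:weighted-liouville} with $h(t)=c_+\pos{t}^{\gamma}-c_-\negpart{t}^{\gamma}$ and $f(s)=s^{p}$. Your verification of (L1)--(L4) and of \cref{eq:h-growth} is complete, including the choice of a H\"older exponent $\alpha\le\min\{\gamma,1\}$ near $t=0$ and the admissibility of $c_-=0$.
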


\subsection{Spectral threshold and exponent range}
If \(D\subset\R^N\) is a bounded \(C^2\) domain, denote by
\(\mu_1^{+}(D)\) the positive principal half-eigenvalue of
\(-\Mplus\), characterized by the existence of
\(\phi_D>0\) in \(D\) satisfying
\begin{equation}\label{eq:principal-half-eigenvalue}
\begin{cases}
 -\Mplus(D^2\phi_D)=\mu_1^{+}(D)\phi_D & \text{in }D,\\
 \phi_D=0 & \text{on }\partial D.
\end{cases}
\end{equation}
Define 
\begin{equation}\label{eq:spectral-threshold}
 \sigma_*:=\min\bigl\{\mu_1^{+}(G^+),
                 \mu_1^{+}(Z_-)\bigr\}.
\end{equation}

\begin{proposition}[Necessary spectral restriction]
\label{prop:necessary-spectral-restriction}
If \cref{eq:problem} has a positive solution, then
\begin{equation}\label{eq:necessary-spectral-restriction}
 \mu<\sigma_*.
\end{equation}
\end{proposition}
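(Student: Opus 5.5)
The plan is to compare a positive solution \(u\) of \cref{eq:problem} with the principal half-eigenfunction of each subdomain \(D\in\{G_+,Z_-^{\circ}\}\); this is a supersolution argument on \(D\). On \(D\) we have \(a\ge0\): indeed \(\overline{G_+}\) is the closure of \(\OmP\cup Z_+\cup Z_-^+\), where \(a\ge0\), and \(Z_-\subset\OmZ\), where \(a=0\). Hence, in the viscosity sense,
\[
-\Mplus(D^2u)=\mu u+a(x)u^p\ge \mu u\qquad\text{in }D .
\]
In addition, \(u>0\) on \(\overline D\), because \(\overline D\subset\Om\). For \(Z_-\) this follows from \(\OmZ\Subset\Om\) in \cref{ass:domain}. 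For \(G_+\) we note that \(\partial G_+\) lies in \(\overline{\OmP}\cup\partial Z_-^+\). If \(a>0\) on \(\partial\Om\), then \(G_+\) could reach \(\partial\Om\); in that case we take instead the subcase where \(\partial G_+\) meets \(\partial\Om\), and treat it below. Thus \(u\) is a positive strict supersolution of \(-\Mplus(D^2w)-\mu w\ge0\) in \(D\), with \(u>0\) on \(\partial D\) whenever \(\overline D\subset\Om\).

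The key step is the characterization of the principal half-eigenvalue of Quaas--Sirakov \cite{QuaasSirakov2008}, which we take from Section~3. If there is \(w>0\) in \(D\) with \(-\Mplus(D^2w)\ge\mu w\) and \(w\) is not a multiple of \(\phi_D\), then \(\mu<\mu_1^+(D)\). Equivalently, \(\mu_1^+(D)=\sup\{\mu:\exists\,w>0\text{ in }D,\ -\Mplus(D^2w)\ge\mu w\}\), and the supremum is not attained except by \(\phi_D\) itself. This gives \(\mu\le\mu_1^+(D)\) at once. To exclude equality we argue directly. Suppose \(\mu=\mu_1^+(D)\). Let \(t^*=\sup\{t>0: u\ge t\phi_D \text{ in } D\}\). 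We have \(t^*>0\) since \(u\ge c>0\) on \(\overline D\) (when \(\overline D\subset\Om\)); otherwise Hopf's lemma for viscosity supersolutions gives the same conclusion. Set \(w=u-t^*\phi_D\ge0\). Using \(\Mplus(X+Y)\le\Mplus(X)+\Mplus(Y)\), we get
\[
\mathcal{M}^-_{\lambda,\Lambda}(D^2w)\le \Mplus(D^2u)-\Mplus(D^2 t^*\phi_D)\le -\mu w - a u^p\le -\mu w,
\]
so \(-\mathcal{M}^-_{\lambda,\Lambda}(D^2w)+\mu^- w\ge0\) in an appropriate form. The strong maximum principle then gives either \(w\equiv0\) or \(w>0\) together with the Hopf property at \(\partial D\). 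In the second case \(u\ge(t^*+\varepsilon)\phi_D\) for some \(\varepsilon>0\), which contradicts the maximality of \(t^*\). In the first case \(u=t^*\phi_D\) on \(D\). This is impossible: on \(\partial D\) we have \(\phi_D=0<u\), or else we use the strictness \(a u^p>0\) on the nonempty open set \(\OmP\cap D\) (for \(G_+\)). Therefore \(\mu<\mu_1^+(D)\) for both choices of \(D\), and taking the minimum gives \cref{eq:necessary-spectral-restriction}.

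The main obstacle is the rigorous comparison between the viscosity solution \(u\) and \(\phi_D\). Since differences of solutions of Pucci equations satisfy only extremal inequalities, I would first use the \(C^{1,\alpha}\) regularity from Section~3, which makes \(\phi_D\) smooth enough to serve as a test function. I would then state the difference inequality with \(\mathcal{M}^-_{\lambda,\Lambda}\) and a zero-order term \(\mu\) of either sign. The sign of the zero-order term is handled by the standard trick of applying the strong maximum principle to \(-\mathcal{M}^-(D^2w)+\mu^-w\ge0\), since \(w\ge0\). A minor further point is the boundary case where \(G_+\) touches \(\partial\Om\). There \(u\) and \(\phi_{G_+}\) both vanish on that part of the boundary, and \(t^*>0\) has to come from the Hopf lemma for both functions on the \(C^2\) boundary.
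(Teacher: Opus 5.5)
Your proposal is correct and follows essentially the paper's route. The paper also compares $u$ with $t\phi_1^+$ on $G_+$ and on $Z_-^{\circ}$, slides $t$ up to a maximal $t^*$ (its lemma on positive solutions with positive boundary values), and concludes via the strong maximum principle and $u>0=\phi_1^+$ on the boundary. Your write-up is somewhat more careful than the paper's: you state the difference inequality $\mathcal{M}^{-}_{\lambda,\Lambda}(D^2w)+\mu w\le -a u^p$ correctly, you rule out $w\equiv0$ on $G_+$ through the strictness $a u^p>0$ on $\Omega^{+}$, and you flag the case $a>0$ on $\partial\Omega$, where $\partial G_+$ meets $\partial\Omega$ and $u$ is no longer positive on all of $\partial G_+$. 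In that case, $t^*>0$ and the final $\varepsilon$-improvement come from the Hopf lemma for $u$ and for $w$, together with a Lipschitz bound for $\phi_{G_+}$ (not Hopf for $\phi_{G_+}$).
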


\subsection{Uniform a priori bound}
\begin{theorem}\label{thm:apriori}[Uniform \(L^\infty\) estimate]
Suppose that assumptions
\ref{ass:domain},\ref{ass:geometry},\ref{ass:profile}
hold and
\begin{equation}\label{eq:p-range}
 1<p\leq\psafe,
\end{equation}
$\mu\in \Ipar,$ where $\Ipar$ is a bounded interval in $\mathbb{R}$ such that $\sup\Ipar<\sigma_*$
Then there exists a constant \(C>0\) such that any positive solution of \eqref{eq:problem1} with $\mu\in\Ipar$ satisfies 
\begin{equation}\label{eq:apriori-bound}
 \norm{u}_{L^\infty(\Om)}\le C.
\end{equation}

\end{theorem}

\section{Preliminary results}\label{sec:preliminaries}

This section fixes the analytical conventions and records precisely the
results used in the moving-plane and blow-up arguments.  All differential
inequalities are understood in the viscosity sense unless a stronger
solution class is explicitly stated.  We prove the algebraic and scaling
facts that are special to the present problem and cite the standard
regularity, maximum-principle, and Liouville inputs.  This separation is
important: the Pucci operator is nonlinear, so identities based on linear
superposition must not be used.

\subsection{Pucci operators and their algebra}

Let
\[
 \mathcal A_{\lambda,\Lambda}
 :=\{A\in\Sn:\lambda I\le A\le\Lambda I\}.
\]
For \(X\in\Sn\), the extremal operators are
\begin{equation}\label{eq:pucci-sup-inf}
 \Mplus(X)=\sup_{A\in\mathcal A_{\lambda,\Lambda}}\operatorname{tr}(AX),
 \qquad
 \Mminus(X)=\inf_{A\in\mathcal A_{\lambda,\Lambda}}\operatorname{tr}(AX).
\end{equation}
Equivalently, if \(e_1,\ldots,e_N\) are the eigenvalues of \(X\), then
\begin{align}
 \Mplus(X)
   &=\Lambda\sum_{e_i>0}e_i+\lambda\sum_{e_i<0}e_i,
 \label{eq:pucci-plus-eigenvalues}\\
 \Mminus(X)
   &=\lambda\sum_{e_i>0}e_i+\Lambda\sum_{e_i<0}e_i.
 \label{eq:pucci-minus-eigenvalues}
\end{align}

\begin{proposition}[Lemma 2.10\cite{CaffarelliCabre1995}]\label{prop:pucci-algebra}
For every \(X,Y\in\Sn\), every \(s\ge0\), and every orthogonal matrix
\(Q\),
\begin{align}
 \Mplus(-X)&=-\Mminus(X),
 \label{eq:pucci-duality}\\
 \Mplus(sX)&=s\Mplus(X),\qquad
 \Mminus(sX)=s\Mminus(X),
 \label{eq:pucci-homogeneity}\\
 \Mplus(Q^TXQ)&=\Mplus(X),\qquad
 \Mminus(Q^TXQ)=\Mminus(X),
 \label{eq:pucci-rotation}\\
 \Mminus(X-Y)&\le \Mplus(X)-\Mplus(Y)
                \le\Mplus(X-Y),
 \label{eq:pucci-difference-plus}\\
 \Mminus(X-Y)&\le \Mminus(X)-\Mminus(Y)
                \le\Mplus(X-Y).
 \label{eq:pucci-difference-minus}
\end{align}
\end{proposition}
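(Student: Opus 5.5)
All five relations follow from the variational definitions \eqref{eq:pucci-sup-inf}, using only three facts: $A\mapsto\operatorname{tr}(AX)$ is linear in $X$ for fixed $A$, the set $\mathcal A_{\lambda,\Lambda}$ is compact, and $\mathcal A_{\lambda,\Lambda}$ is invariant under orthogonal conjugation. The eigenvalue formulas \eqref{eq:pucci-plus-eigenvalues}–\eqref{eq:pucci-minus-eigenvalues} are not needed. Compactness ensures that every supremum and infimum below is finite and attained.

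\emph{Duality \eqref{eq:pucci-duality}.} Since $\operatorname{tr}(A(-X))=-\operatorname{tr}(AX)$,
\[
\Mplus(-X)=\sup_{A}\bigl(-\operatorname{tr}(AX)\bigr)=-\inf_A\operatorname{tr}(AX)=-\Mminus(X).
\]

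\emph{Homogeneity \eqref{eq:pucci-homogeneity}.} For $s\ge0$ we have $\operatorname{tr}(A\,sX)=s\operatorname{tr}(AX)$, and multiplication by a nonnegative constant commutes with both $\sup$ and $\inf$. The case $s=0$ is trivial, since both sides vanish.

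\emph{Rotation invariance \eqref{eq:pucci-rotation}.} By cyclicity of the trace, $\operatorname{tr}(AQ^TXQ)=\operatorname{tr}(QAQ^TX)$. The map $A\mapsto QAQ^T$ is a bijection of $\mathcal A_{\lambda,\Lambda}$ onto itself: it preserves symmetry, and $\lambda I\le A\le\Lambda I$ is equivalent to $\lambda I\le QAQ^T\le\Lambda I$ because $QIQ^T=I$. Hence both extremal values coincide for $X$ and $Q^TXQ$.

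\emph{Difference inequalities \eqref{eq:pucci-difference-plus}–\eqref{eq:pucci-difference-minus}.} The key observation is that $\Mplus$ is subadditive and $\Mminus$ is superadditive. Indeed, for every $A$,
\[
\operatorname{tr}(A(X+Y))=\operatorname{tr}(AX)+\operatorname{tr}(AY)\le\Mplus(X)+\Mplus(Y),
\]
and taking the supremum over $A$ gives $\Mplus(X+Y)\le\Mplus(X)+\Mplus(Y)$. Symmetrically, $\Mminus(X+Y)\ge\Mminus(X)+\Mminus(Y)$.

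For \eqref{eq:pucci-difference-plus}:
\begin{itemize}
\item Writing $X=(X-Y)+Y$ gives the upper bound $\Mplus(X)-\Mplus(Y)\le\Mplus(X-Y)$.
\item Writing $Y=(Y-X)+X$ gives $\Mplus(Y)\le\Mplus(Y-X)+\Mplus(X)$. By duality, $\Mplus(Y-X)=-\Mminus(X-Y)$, which is exactly the lower bound.
\end{itemize}

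For \eqref{eq:pucci-difference-minus}:
\begin{itemize}
\item Superadditivity with $X=(X-Y)+Y$ gives $\Mminus(X)-\Mminus(Y)\ge\Mminus(X-Y)$.
\item Superadditivity with $Y=(Y-X)+X$ gives $\Mminus(Y)\ge\Mminus(Y-X)+\Mminus(X)$. By duality, $\Mminus(Y-X)=-\Mplus(X-Y)$, which yields the upper bound $\Mminus(X)-\Mminus(Y)\le\Mplus(X-Y)$.
\end{itemize}

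There is no genuine obstacle here. The only point requiring a moment's care is the rotation invariance, where one must check that conjugation maps $\mathcal A_{\lambda,\Lambda}$ onto itself rather than merely into it. This holds because the inverse conjugation, $A\mapsto Q^TAQ$, has the same form and so also maps $\mathcal A_{\lambda,\Lambda}$ into itself.
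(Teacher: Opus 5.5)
Your proof is correct: each relation follows from the sup/inf definition \eqref{eq:pucci-sup-inf} through linearity of $X\mapsto\operatorname{tr}(AX)$, sub/superadditivity, duality, and the invariance of $\mathcal A_{\lambda,\Lambda}$ under orthogonal conjugation, and the sign manipulations in the two difference inequalities are all right. The paper gives no proof of its own and simply cites Caffarelli--Cabr\'e, where these properties are likewise obtained from the extremal (sup/inf) characterization, so your argument is essentially the standard one.
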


\begin{lemma}[Lemma 3.2\cite{armstrong2009principal}]\label{lem:viscosity-difference}
Let \(\tilde D\subset\R^N\) be open, let \(f,g\in C(\tilde D)\), and let
\(u,v\in C(\tilde D)\) solve
\[
 -\Mplus(D^2u)=f(x),\qquad -\Mplus(D^2v)=g(x)
 \quad\text{in }\tilde D.
\]
Then \(w=u-v\) satisfies
\begin{equation}\label{eq:viscosity-difference}
 -\Mplus(D^2w)\le f-g\le-\Mminus(D^2w)
 \quad\text{in }\tilde D.
\end{equation}
In particular, \(\Mplus(D^2u)-\Mplus(D^2v)\) must never be replaced by
\(\Mplus(D^2(u-v))\) as an identity.
\end{lemma}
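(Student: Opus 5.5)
The plan is to prove the two inequalities first for classical solutions and then to pass to viscosity solutions with the Crandall--Ishii--Lions theorem of sums \cite{CrandallIshiiLions1992}.

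\emph{Classical case.} If \(u,v\in C^2(\tilde D)\), then \(D^2w=D^2u-D^2v\) pointwise. Applying \cref{eq:pucci-difference-plus} with \(X=D^2u\) and \(Y=D^2v\) gives
\[
 \Mminus(D^2w)\le \Mplus(D^2u)-\Mplus(D^2v)=-(f-g)\le \Mplus(D^2w).
\]
Multiplying by \(-1\) yields exactly \cref{eq:viscosity-difference} pointwise. This case already covers the \(C^2\) solutions used in the moving-plane argument. In general, however, a test function touching \(w\) need not split into test functions for \(u\) and \(v\) separately, so a doubling argument is needed.

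\emph{Viscosity case, left inequality.} We must show that \(w\) is a viscosity subsolution of \(-\Mplus(D^2w)=f-g\).
\begin{enumerate}
\item Let \(\varphi\in C^2\) touch \(w\) from above at \(x_0\). After adding \(|x-x_0|^4\), we may assume the contact is strict on a closed ball \(\overline B\).
\item Maximize
\[
 \Phi_\varepsilon(x,y)=u(x)-v(y)-\varphi(x)-\frac{|x-y|^2}{2\varepsilon}
\]
over \(\overline B\times\overline B\). Standard penalization arguments give maximizers \((x_\varepsilon,y_\varepsilon)\to(x_0,x_0)\) with \(|x_\varepsilon-y_\varepsilon|^2/\varepsilon\to0\).
\item By the theorem of sums, for small \(\varepsilon\) there are matrices \(X,Y\in\Sn\) such that \((D\varphi(x_\varepsilon)+p_\varepsilon,\,X+D^2\varphi(x_\varepsilon))\) lies in the closed superjet \(\overline J^{2,+}u(x_\varepsilon)\), with \(p_\varepsilon=(x_\varepsilon-y_\varepsilon)/\varepsilon\), and \((p_\varepsilon,Y)\) lies in the closed subjet \(\overline J^{2,-}v(y_\varepsilon)\). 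Moreover the matrix inequality gives \(X\le Y\).
\item The equations for \(u\) and \(v\) then give
\[
 -\Mplus\bigl(X+D^2\varphi(x_\varepsilon)\bigr)\le f(x_\varepsilon),\qquad -\Mplus(Y)\ge g(y_\varepsilon).
\]
\item Subtract, use the right inequality in \cref{eq:pucci-difference-plus}, and use monotonicity of \(\Mplus\) together with \(X-Y\le0\):
\[
 g(y_\varepsilon)-f(x_\varepsilon)\le \Mplus\bigl(X+D^2\varphi(x_\varepsilon)\bigr)-\Mplus(Y)\le\Mplus\bigl(X-Y+D^2\varphi(x_\varepsilon)\bigr)\le \Mplus\bigl(D^2\varphi(x_\varepsilon)\bigr).
\]
\item Let \(\varepsilon\to0\). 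Continuity of \(f,g\) and of \(D^2\varphi\) gives \(-\Mplus(D^2\varphi(x_0))\le f(x_0)-g(x_0)\), which is the claimed subsolution property.
\end{enumerate}

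\emph{Viscosity case, right inequality.} We must show that \(w\) is a viscosity supersolution of \(-\Mminus(D^2w)=f-g\).
\begin{enumerate}
\item Let \(\varphi\) touch \(w\) from below at \(x_0\), strictly after the same modification.
\item Minimize \(u(x)-v(y)-\varphi(x)+|x-y|^2/(2\varepsilon)\). The theorem of sums now gives \(X\in\overline J^{2,-}\)-data for \(u\) and \(Y\in\overline J^{2,+}\)-data for \(v\), with \(X\ge Y\).
\item The equations give \(-\Mplus(X+D^2\varphi)\ge f\) and \(-\Mplus(Y)\le g\).
\item The left inequality in \cref{eq:pucci-difference-plus} gives
\[
 \Mplus(X+D^2\varphi)-\Mplus(Y)\ge\Mminus(X-Y+D^2\varphi)\ge\Mminus(D^2\varphi).
\]
Combining with the previous step gives \(f-g\le-\Mminus(D^2\varphi)\) at the approximating points.
\item Letting \(\varepsilon\to0\) concludes the proof.
\end{enumerate}

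\emph{Main obstacle.} The delicate step is the correct application of the theorem of sums: the test function \(\varphi\) must be absorbed into the \(x\)-variable, and the sign of \(X-Y\) must be read off from the matrix inequality. The rest is the algebra of \cref{prop:pucci-algebra} and a limit using continuity.
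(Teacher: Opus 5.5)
Your proof is correct. The paper gives no argument of its own for this lemma; it only cites \cite{armstrong2009principal}. At the single point where the lemma is invoked, in Section~\ref{sec:weighted-liouville} to obtain \cref{eq:reflected-pucci-inequality}, the paper notes that by the $C^{2,\beta}_{\mathrm{loc}}$ regularity \cref{eq:liouville-regularity} it is equivalent to applying \cref{eq:pucci-difference-plus} pointwise. That is exactly your classical case, so for the paper's actual application your first paragraph already suffices.

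Your viscosity-case argument is the standard doubling-of-variables proof, and it is carried out correctly. Absorbing $\varphi$ into the $x$-variable shifts the closed semijets of $u$ by $(D\varphi,D^2\varphi)$. Testing the theorem-of-sums matrix bound on vectors $(\xi,\xi)$ gives $X\le Y$ in the subsolution part and $X\ge Y$ in the supersolution part. Monotonicity of $\Mplus$ and $\Mminus$ combined with the two halves of \cref{eq:pucci-difference-plus} then yields the inequalities at $(x_\varepsilon,y_\varepsilon)$. The passage $\varepsilon\to0$ uses only continuity of $f$, $g$, $D^2\varphi$ and the fact that $\Mplus$ does not depend on $x$.

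What your route buys over the citation is a self-contained proof of the full viscosity statement for merely continuous $u,v$, with the exact ellipticity constants $\lambda,\Lambda$. What the paper's citation buys is brevity.
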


\subsection{Viscosity convention and stability}

\begin{definition}[Sign convention]\label{def:viscosity}
Let \(G:D\times\R\to\R\) be continuous and put
\[
 F(x,r,X):=-\Mplus(X)-G(x,r).
\]
An upper semicontinuous function \(u\) is a viscosity subsolution of
\(F=0\) if, whenever \(\phi\in C^2(D)\) touches \(u\) from above at
\(x_0\in D\),
\[
 -\Mplus(D^2\phi(x_0))-G(x_0,u(x_0))\le0.
\]
A lower semicontinuous function is a viscosity supersolution if the reverse
inequality holds for every test function touching from below.  A continuous
function $r$ is called viscosity solution if it is both sub and supersolution.
\end{definition}

For \cref{eq:problem}, one takes
\(G(x,r)=\mu r+a(x)r^p\) for \(r\ge0\).  As a sign check, at a smooth local
maximum of \(u\), one has \(D^2u\le0\), hence
\(\Mplus(D^2u)=\lambda\operatorname{tr}(D^2u)\le0\) and
\(-\Mplus(D^2u)\ge0\).

\subsection{H\texorpdfstring{\"o}{o}lder estimates, Harnack inequalities,
and compactness}

\begin{theorem}[Proposition~4.10\cite{CaffarelliCabre1995}~Interior H\texorpdfstring{\"o}{o}lder estimate]
\label{thm:interior-holder}
There exist \(\alpha\in(0,1)\) and \(C\ge1\), depending only on
\(N,\lambda,\Lambda\), with the following property.  If
\(u\in C(B_{2R}(x_0))\) satisfies
\[
 \Mminus(D^2u)\le K,
 \qquad
 \Mplus(D^2u)\ge-K
 \quad\text{in }B_{2R}(x_0),
\]
then
\begin{equation}\label{eq:interior-holder}
 [u]_{C^\alpha(B_R(x_0))}
 \le C R^{-\alpha}
 \bigl(\norm{u}_{L^\infty(B_{2R}(x_0))}+R^2K\bigr).
\end{equation}
\end{theorem}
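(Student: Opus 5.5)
The statement is the classical Krylov--Safonov interior H\"older estimate for the class \(S(\lambda,\Lambda,K)\), as in \cite{CaffarelliCabre1995}. I would prove it by the standard oscillation-decay scheme: reduce to unit scale, prove a geometric decay of oscillation from the Harnack inequality, and iterate.

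\emph{Step 1: reduction to \(R=1\).} Set \(\tilde u(y):=u(x_0+Ry)\) on \(B_2\). By the homogeneity \eqref{eq:pucci-homogeneity},
\[
D^2\tilde u(y)=R^2D^2u(x_0+Ry),
\]
so \(\tilde u\) satisfies \(\Mminus(D^2\tilde u)\le R^2K\) and \(\Mplus(D^2\tilde u)\ge -R^2K\) in the viscosity sense on \(B_2\). It therefore suffices to show
\[
[\tilde u]_{C^\alpha(B_1)}\le C\bigl(\norm{\tilde u}_{L^\infty(B_2)}+K'\bigr),\qquad K'=R^2K .
\]
Undoing the scaling then produces the factor \(R^{-\alpha}\) in \eqref{eq:interior-holder}.

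\emph{Step 2: oscillation decay.} This is the heart of the argument. For any ball \(B_{r}(y)\subset B_2\), set \(M_r=\sup_{B_r}\tilde u\) and \(m_r=\inf_{B_r}\tilde u\). The functions \(M_r-\tilde u\) and \(\tilde u-m_r\) are nonnegative in \(B_r\). Using \eqref{eq:pucci-duality}, each lies in the class \(S(\lambda,\Lambda,K')\), i.e.
\[
\Mminus(D^2w)\le K',\qquad \Mplus(D^2w)\ge -K'.
\]
The Krylov--Safonov Harnack inequality, applied after rescaling to \(B_r\), gives for nonnegative \(w\) in this class
\[
\sup_{B_{r/4}}w\le C_0\Bigl(\inf_{B_{r/4}}w+r^2K'\Bigr).
\]
This in turn rests on the weak Harnack inequality, proved via the Alexandrov--Bakelman--Pucci estimate and a Calder\'on--Zygmund cube decomposition.

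Apply this inequality to both \(M_r-\tilde u\) and \(\tilde u-m_r\) and add the results. Writing \(\omega(s)=\operatorname{osc}_{B_s}\tilde u\), one obtains
\[
\omega(r/4)\le \theta\,\omega(r)+C r^2K',\qquad \theta=\frac{C_0-1}{C_0+1}<1 .
\]

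\emph{Step 3: iteration and conclusion.} Iterate this inequality along \(r_k=4^{-k}\) to get
\[
\omega(r)\le C r^{\alpha}\bigl(\omega(1)+K'\bigr),
\]
where \(\alpha\) is chosen with \(4^{-\alpha}\ge\theta\) and \(\alpha<2\), so that the \(r^2\) error terms sum geometrically. Centre this at every point of \(B_1\), which is possible since \(B_1(y)\subset B_2\) for \(y\in B_1\). Use \(\omega(1)\le 2\norm{\tilde u}_{L^\infty(B_2)}\), and treat pairs of points at distance \(\ge 1/4\) trivially. This yields the H\"older seminorm bound, with \(\alpha\) and \(C\) depending only on \(N,\lambda,\Lambda\).

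\emph{Main obstacle.} The only genuinely deep input is the weak Harnack inequality for viscosity supersolutions, \(\Mminus(D^2w)\le K'\) with \(w\ge0\). For this I would invoke the ABP estimate together with the \(L^\varepsilon\) decay of level sets, exactly as in \cite[Ch.~4]{CaffarelliCabre1995}, rather than reprove it. The remaining steps are scaling and bookkeeping.
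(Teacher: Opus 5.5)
Your proposal is correct and follows essentially the same route as the source. The paper gives no proof of its own; it cites Caffarelli--Cabr\'e, Proposition~4.10. That result is proved exactly as you sketch: apply the Harnack inequality to $\sup u - u$ and $u-\inf u$ to obtain the oscillation decay $\omega(r/4)\le\theta\,\omega(r)+Cr^2K'$, then iterate, while the rescaling to radius $R$ produces the factors $R^{-\alpha}$ and $R^2K$.
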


\begin{theorem}[Theorem~4.3\cite{CaffarelliCabre1995}~Harnack inequality]\label{lem:harnack}
Let \(u\ge0\) solve
\[
 -\Mplus(D^2u)=q(x)u\quad\text{in }B_{2R}(x_0),
 \qquad q\in C(B_{2R}(x_0))\cap L^\infty(B_{2R}(x_0)).
\]
Then
\begin{equation}\label{eq:harnack}
 \sup_{B_R(x_0)}u\le C\inf_{B_R(x_0)}u,
\end{equation}
where \(C\) depends only on
\(N,\lambda,\Lambda\), and \(R^2\norm q_{L^\infty}\).
\end{theorem}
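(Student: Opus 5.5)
By scaling $x\mapsto x_0+Rx$, which preserves the Pucci operators by the homogeneity in \cref{prop:pucci-algebra}, I can assume $R=1$; the coefficient becomes $\tilde q(x)=R^2q(x_0+Rx)$, so the only relevant quantity is $\kappa:=R^2\norm{q}_{L^\infty}$.

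The first step is to rewrite the equation as a pair of extremal inequalities. From $\Mplus(D^2u)=-qu$ and $u\ge0$ we get, in the viscosity sense,
\[
 \Mplus(D^2u)\ge-\kappa u,\qquad \Mminus(D^2u)\le\Mplus(D^2u)\le\kappa u .
\]
So $u$ belongs to the class $S(\lambda,\Lambda,f)$ of Caffarelli--Cabr\'e with right-hand side $f=-\tilde q u$, and $|f|\le\kappa u$. The Krylov--Safonov Harnack inequality for this class gives
\[
 \sup_{B_{1/2}}u\le C_0\Bigl(\inf_{B_{1/2}}u+\norm{f}_{L^N(B_1)}\Bigr)
 \le C_0\inf_{B_{1/2}}u+C_0|B_1|^{1/N}\kappa\sup_{B_1}u .
\]
The difficulty is that the error term involves $\sup u$ on the larger ball, so this estimate does not close directly.

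I would handle this in two stages. In the \emph{small-coefficient} stage, I use the weak Harnack inequality for supersolutions and the local maximum principle for subsolutions separately. The zero-order term enters only as $\kappa\,\sup u$ on slightly larger balls. I then absorb it with the standard iteration lemma over radii $1/2\le s<t\le1$, where $\sup_{B_s}u\le\frac12\sup_{B_t}u+C(t-s)^{-\beta}(\text{average of }u^\varepsilon)^{1/\varepsilon}$. This works once $\kappa\le\kappa_0(N,\lambda,\Lambda)$ is small. The result is a Harnack inequality on balls of radius $r$ with $r^2\norm{q}_\infty\le\kappa_0$, with a universal constant $C_1$.

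In the \emph{general} stage, I choose $r=\min\{1,(\kappa_0/\kappa)^{1/2}\}/4$ and cover $\overline{B_1}$ by a chain of balls $B_r(y_j)$ with $B_{2r}(y_j)\subset B_2$, each overlapping its neighbour. Any two points of $B_1$ are joined by at most $K\lesssim r^{-1}$ balls of the chain. Applying the small-coefficient Harnack inequality along the chain gives $\sup_{B_1}u\le C_1^{K}\inf_{B_1}u$. This yields \cref{eq:harnack} with $C=C_1^{K}$, which depends only on $N,\lambda,\Lambda$ and, through $K$, on $R^2\norm{q}_{L^\infty}$.

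I expect the main obstacle to be the absorption step in the small-coefficient stage. One must check that the weak Harnack ($L^\varepsilon$) estimate and the local maximum principle hold for viscosity inequalities with a merely bounded continuous zero-order term. One must also check that the resulting self-referential bound can be closed by the radius iteration. This is exactly where the smallness of $\kappa$ is needed, and where Theorem~4.3 of \cite{CaffarelliCabre1995}, combined with the above reduction, supplies the required estimate.
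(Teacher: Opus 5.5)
The paper only cites \cite[Theorem~4.3]{CaffarelliCabre1995}. You are right that a reduction is needed, since that theorem carries the error $\norm{f}_{L^N}$ with $f=-\tilde q u$. Your scaling, the reduction $|f|\le\kappa u$, the absorption in the local maximum principle, and the final chaining stage are all fine. The gap is in the small-coefficient stage, on the supersolution side.

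For $x\in B_s$, the local maximum principle on $B_{t-s}(x)$ has error $C\kappa(t-s)^2\sup_{B_{t-s}(x)}u\le C\kappa\sup_{B_t}u$. So your iteration correctly yields $\sup_{B_{1/2}}u\le C\norm{u}_{L^{\varepsilon}(B_1)}$. To replace this $L^\varepsilon$ quantity by an infimum you must invoke the weak Harnack inequality. Its error term $\kappa\norm{u}_{L^N}$ lives on a ball strictly larger than the one carrying the $L^\varepsilon$ norm, hence outside the range $[1/2,1]$ of your iteration. In the end it sits on the ball where the equation holds, where $u$ need not even be bounded. You are then back at $\sup_{B_{1/2}}u\le C\inf_{B_{1/2}}u+C\kappa\norm{u}_{L^N(B_2)}$, which is the same non-closing estimate you started from.

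Applying the weak Harnack inequality only on balls of radius comparable to $t-s$, to stay inside $B_t$, does not help. It yields an infimum over a small ball near $x$. Chaining that back to a fixed ball multiplies the error by the product of the Harnack constants along the chain. The coefficient of $\sup_{B_t}u$ is then no longer a fixed $\theta<1$; it blows up as $t-s\to0$, and the iteration lemma does not apply.

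What is missing is a weak Harnack inequality with no error term for $u\ge0$, $\Mminus(D^2u)\le\kappa u$, $\kappa\le\kappa_0$. It does not follow from the statement of the Caffarelli--Cabr\'e weak Harnack inequality applied to $\min(u,M)$, which only controls $|\{u>t\}|$ for $t\lesssim 1/\kappa$. It does follow from its proof:
\begin{itemize}
\item Constants are supersolutions of $\Mminus(D^2w)-\kappa w\le0$, so $w=\min(u,2M)$ satisfies $\Mminus(D^2w)\le 2M\kappa$.
\item Hence the basic measure estimate $|\{u\le M\}\cap Q_1|\ge\mu$ applies to $w$ once $2M\kappa$ is small.
\item The inequality $\Mminus(D^2u)\le\kappa u$ is invariant under $u\mapsto u/M^k$ and only improves under dilations, so this can be repeated at every step of the Calder\'on--Zygmund iteration.
\end{itemize}

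There are two other ways to close the argument.
\begin{itemize}
\item Keep Theorem~4.3 as the only input, but absorb with the weighted quantity $\Phi=\sup_{B_{3/2}}d^{\beta}u$, where $d=\operatorname{dist}(\cdot,\partial B_{3/2})$. Chain the estimate with error along Whitney balls, and choose $\beta$ so that $d^{\beta}$ offsets the growth $C_0^{K}$ of the chain constants. This gives $\Phi\le C\inf_{B_1}u+C\kappa\Phi$.
\item Linearize. By convexity of $\Mplus$, $u\in W^{2,p}_{\mathrm{loc}}$, so $u$ is a strong solution of $\operatorname{tr}(A(x)D^2u)+qu=0$ with measurable $A(x)\in\mathcal A_{\lambda,\Lambda}$. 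Then quote the Krylov--Safonov Harnack inequality for linear equations with a bounded zero-order coefficient of either sign (Gilbarg--Trudinger, Corollary~9.25).
\end{itemize}
Once a clean small-$\kappa$ Harnack inequality is available, your covering argument finishes the proof.
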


\begin{corollary}[Bounded homogeneous entire solutions]
\label{cor:bounded-homogeneous}
If \(u\in C(\R^N)\cap L^\infty(\R^N)\) satisfies
\(-\Mplus(D^2u)=0\) in \(\R^N\), then \(u\) is constant.
\end{corollary}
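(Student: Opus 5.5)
The plan is to apply the Harnack inequality of \cref{lem:harnack} with \(q\equiv0\) to the nonnegative function obtained by subtracting the infimum of \(u\). The key point is that the Harnack constant is then independent of the radius, so we can let the balls exhaust \(\R^N\).

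\emph{Step 1: normalization.} Since \(u\) is bounded, \(m:=\inf_{\R^N}u\) is finite. Put \(w:=u-m\ge0\). Adding a constant does not change \(D^2\) of any test function touching the graph, so \(w\in C(\R^N)\cap L^\infty(\R^N)\) is still a viscosity solution of \(-\Mplus(D^2w)=0\) in \(\R^N\). This is a direct check with \cref{def:viscosity}, translating test functions by \(m\).

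\emph{Step 2: scale-invariant Harnack.} Apply \cref{lem:harnack} to \(w\) in \(B_{2R}(0)\) with \(q\equiv0\). Then \(R^2\norm{q}_{L^\infty}=0\), so the constant \(C\) depends only on \(N,\lambda,\Lambda\) and not on \(R\). This gives
\[
 \sup_{B_R(0)}w\le C\inf_{B_R(0)}w\qquad\text{for every }R>0 .
\]

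\emph{Step 3: passage to the limit.} By the definition of \(m\), for every \(\varepsilon>0\) there is a point \(y_\varepsilon\) with \(w(y_\varepsilon)<\varepsilon\). Hence \(\inf_{B_R(0)}w<\varepsilon\) as soon as \(R>|y_\varepsilon|\), that is, \(\inf_{B_R(0)}w\to0\) as \(R\to\infty\). Now fix \(x\in\R^N\). For all \(R>|x|\) we get \(0\le w(x)\le\sup_{B_R(0)}w\le C\inf_{B_R(0)}w\), and the right-hand side tends to \(0\). Therefore \(w(x)=0\), so \(u\equiv m\) is constant.

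The only point needing care is Step 2: it relies on the Harnack constant being uniform in \(R\), which holds precisely because the zeroth-order coefficient vanishes. As an alternative route, \cref{thm:interior-holder} with \(K=0\) gives \([u]_{C^\alpha(B_R)}\le CR^{-\alpha}\norm{u}_{L^\infty}\). For fixed \(x,y\), letting \(R\to\infty\) yields \(|u(x)-u(y)|\le CR^{-\alpha}\norm{u}_{L^\infty}|x-y|^\alpha\to0\), which proves the claim as well.
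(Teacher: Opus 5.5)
Your proof is correct. Your main route differs from the paper's, but your ``alternative'' is exactly what the paper does: it applies \cref{thm:interior-holder} with $K=0$ on a ball of radius $R$ containing $x$ and $y$, obtains $|u(x)-u(y)|\le C\norm{u}_{L^\infty(\R^N)}(|x-y|/R)^\alpha$, and lets $R\to\infty$.

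Your primary argument instead applies the Harnack inequality of \cref{lem:harnack} with $q\equiv0$ to $w=u-\inf u$, and then uses $\inf_{B_R}w\to0$. Both arguments rest on the same fact: for the homogeneous equation the constant does not depend on $R$. They differ in what they need from $u$:
\begin{itemize}
\item The H\"older route needs the two-sided bound $\norm{u}_{L^\infty}$ and requires no normalization.
\item The Harnack route uses only that $u$ is bounded below. So it actually proves the stronger one-sided statement that every solution bounded from below is constant.
\end{itemize}
That stronger form would shorten the paper's argument in \cref{lem:no-stopping}. There, the limit $V\ge0$ with $V(0)=1$ is first shown to be globally bounded via the homogeneous Harnack inequality, and only then fed into this corollary. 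Your argument gives $V\equiv V(0)=1$ in one step.
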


\begin{proof}
Fix \(x,y\in\R^N\).  Apply \cref{eq:interior-holder} with \(K=0\) in a
ball of radius \(R\) containing \(x\) and \(y\).  It gives
\[
 |u(x)-u(y)|\le C\norm u_{L^\infty(\R^N)}
       \left(\frac{|x-y|}{R}\right)^\alpha.
\]
Letting \(R\to\infty\) proves the claim.  Notice that the strong maximum
principle alone would not prove this conclusion.
\end{proof}

\begin{proposition}[Theorem~3.8\cite{caffarelli1996viscosity}, Theorem~2.4\cite{QuaasSirakov2006}]
\label{lem:compactness}
Let \(D_n\subset\R^N\) be domains that converge locally to a domain \(D\),
in the sense that every compact subset of \(D\) is contained in \(D_n\) for
all sufficiently large \(n\).  Suppose
\[
 \norm{v_n}_{L^\infty(D_n)}\le C_0,
 \qquad
 -\Mplus(D^2v_n)=g_n\quad\text{in }D_n,
\]
and, for every compact \(K\Subset D\),
\(\sup_n\norm{g_n}_{L^\infty(K)}<\infty\).  Then a subsequence converges
locally uniformly in \(D\) to a continuous function \(v\).  If
\(g_n\to g\) locally uniformly, then
\[
 -\Mplus(D^2v)=g\quad\text{in }D.
\]

If, in addition, \(D_n\) converge in uniformly \(C^2\) local boundary
charts to a half-space, \(v_n=0\) on the corresponding boundary pieces,
and the right-hand sides are uniformly bounded there, then the convergence
holds locally up to the limiting boundary and \(v=0\) on that boundary.
\end{proposition}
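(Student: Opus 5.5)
The plan is to combine the uniform interior H\"older estimate of \cref{thm:interior-holder} with Arzel\`a--Ascoli, and then pass to the limit in the equation by the standard viscosity stability argument. The boundary statement needs one extra step: a barrier construction that gives a uniform modulus of continuity up to the flat limiting boundary.

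\emph{Step 1 (interior compactness).} Fix a compact set \(K\Subset D\) and let \(3R:=\min\{1,\operatorname{dist}(K,\partial D)\}\). By local convergence of the domains, the compact set \(\{x:\operatorname{dist}(x,K)\le 2R\}\) is contained in \(D_n\) for all large \(n\). On this set \(|g_n|\le K_0\), with \(K_0\) independent of \(n\). The equation \(-\Mplus(D^2v_n)=g_n\) implies \(\Mplus(D^2v_n)\ge -K_0\). Using \(\Mminus\le\Mplus\) on the subsolution side, it also implies \(\Mminus(D^2v_n)\le K_0\). Applying \cref{eq:interior-holder} on balls \(B_{2R}(x_0)\) with \(x_0\in K\) gives
\[
[v_n]_{C^\alpha(B_R(x_0))}\le C R^{-\alpha}\bigl(C_0+R^2K_0\bigr),
\]
uniformly in \(n\). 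Hence \(\{v_n\}\) is uniformly bounded and equicontinuous on \(K\). Exhausting \(D\) by compact sets \(K_j\) and taking a diagonal subsequence, we obtain \(v_n\to v\) locally uniformly in \(D\), with \(v\in C(D)\).

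\emph{Step 2 (stability).} Assume \(g_n\to g\) locally uniformly, and let \(\phi\in C^2\) touch \(v\) from above at \(x_0\in D\). Replacing \(\phi\) by \(\phi+|x-x_0|^4\), we may assume the contact is strict on a closed ball \(\overline B_r(x_0)\Subset D\). Let \(x_n\) be a maximum point of \(v_n-\phi\) on \(\overline B_r(x_0)\). Uniform convergence and strictness of the contact force \(x_n\to x_0\), so \(x_n\) is an interior maximum for large \(n\). The subsolution property of \(v_n\) then gives
\[
-\Mplus\bigl(D^2\phi(x_n)\bigr)\le g_n(x_n).
\]
Since \(D^2\phi\) is continuous, \(\Mplus\) is continuous (indeed Lipschitz) on \(\Sn\), and \(g_n(x_n)\to g(x_0)\) by locally uniform convergence together with continuity of \(g\), we obtain \(-\Mplus(D^2\phi(x_0))\le g(x_0)\). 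The supersolution inequality follows symmetrically, so \(v\) is a viscosity solution of \(-\Mplus(D^2v)=g\) in \(D\).

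\emph{Step 3 (boundary case).} This is the step I expect to be the main obstacle: we need equicontinuity of \(v_n\) near the boundary pieces, uniformly in \(n\). The uniform \(C^2\) charts give a uniform exterior ball radius \(r_0\) at every boundary point \(z\in\partial D_n\) in the relevant region. I would build a radial barrier centred at the exterior ball's centre \(y\), of the form
\[
w(x)=A\bigl(r_0^{-\beta}-|x-y|^{-\beta}\bigr)
\]
with \(\beta\) large depending on \(N,\lambda,\Lambda\). This choice makes \(\Mminus(D^2w)\le -c<0\) in an annulus. Scaling \(A\) by \(C_0\) and \(K_0\), the functions \(\pm w\) dominate \(\pm v_n\) on the annulus. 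The comparison principle for \(\Mplus\) and \(\Mminus\) (which follows from \cref{lem:viscosity-difference}) then yields
\[
|v_n(x)|\le C|x-z| \qquad \text{near } z,
\]
with \(C\) independent of \(n\). Combining this boundary modulus with the interior H\"older bound of Step~1 (in the usual way, by comparing the distance to the boundary with the separation of two points) gives equicontinuity up to the boundary. Arzel\`a--Ascoli then gives convergence locally up to the limiting half-space boundary, and the same bound \(|v_n(x)|\le C|x-z|\) passes to the limit to give \(v=0\) there. Finally, the stability argument of Step~2 applies verbatim at interior points of the half-space.
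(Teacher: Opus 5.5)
The paper gives no proof of this proposition. It quotes \cite[Theorem~3.8]{caffarelli1996viscosity} for stability and \cite[Theorem~2.4]{QuaasSirakov2006} for compactness, and in Case~2 of \cref{lem3.11} it uses the boundary estimate of \cite[Theorem~2.3]{QuaasSirakov2006}. Your proposal is a correct self-contained replacement for these citations, built from standard ingredients:
\begin{itemize}
\item the interior bound of \cref{thm:interior-holder} combined with Arzel\`a--Ascoli;
\item the strict-maximum stability argument;
\item an exterior-ball Pucci barrier giving $|v_n(x)|\le C\operatorname{dist}(x,\partial D_n)$ uniformly in $n$.
\end{itemize}
The cited route yields more regularity up to the boundary than is needed, but relies on heavier machinery. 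Yours yields exactly what the blow-up lemmas use. The linear boundary decay even settles Case~2 of \cref{lem3.11} in one line, since $1=w_n(d_ne_N)\le Cd_n\to0$.

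\textbf{A correction in Step~3.} The property you state, $\Mminus(D^2w)\le-c$, only serves the lower bound $v_n\ge-w$. At an interior negative minimum of $v_n+w$, the supersolution inequality tested with $-w$ reads $\Mminus(D^2w)\ge-K_0$, so $\Mminus(D^2w)<-K_0$ gives the contradiction. For the upper bound $v_n\le w$, you test the subsolution inequality with $w$ at an interior positive maximum of $v_n-w$. That inequality, $-\Mplus(D^2w)\le g_n\le K_0$, is contradicted only if $\Mplus(D^2w)<-K_0$. Since
\[
\Mplus(D^2w)=A\beta|x-y|^{-\beta-2}\bigl(\Lambda(N-1)-\lambda(\beta+1)\bigr),\qquad
\Mminus(D^2w)=A\beta|x-y|^{-\beta-2}\bigl(\lambda(N-1)-\Lambda(\beta+1)\bigr),
\]
you need $\beta>\Lambda(N-1)/\lambda-1$, which is stronger than the condition $\beta>\lambda(N-1)/\Lambda-1$ that your stated property requires. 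With that choice, $\Mminus\le\Mplus$ lets one barrier serve both sides. Then take $A$ large in terms of $C_0$, $K_0$ and $r_0$.

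This touching argument is the entire comparison step, because $w$ is smooth and a strict supersolution. Alternatively, apply \cref{lem:comparison} with a constant $G$. Routing it through \cref{lem:viscosity-difference} is an unnecessary detour.

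\textbf{Smaller remarks.}
\begin{itemize}
\item In Step~1, $\Mminus(D^2v_n)\le K_0$ comes from the supersolution side ($\Mplus(D^2v_n)\le K_0$ and $\Mminus\le\Mplus$), not from the subsolution side.
\item To apply Arzel\`a--Ascoli up to the boundary on varying domains, extend $v_n$ by zero across $\partial D_n$ inside each chart. The bound $|v_n(x)|\le C\operatorname{dist}(x,\partial D_n)$ then makes the extensions equicontinuous and forces $v=0$ on the limiting boundary.
\item Take $r_0$ smaller than the chart size, so that $v_n=0$ on all of $\partial D_n\cap B_{2r_0}(y)$.
\end{itemize}
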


\subsection{Comparison and maximum principles}

\begin{theorem}[Proper comparison]\label{lem:comparison}
Let \(D\) be bounded and let \(G\in C(\overline D\times\R)\) be
nondecreasing in its second variable.  Suppose \(u\in\operatorname{USC}(\overline D)\)
is a subsolution and \(v\in\operatorname{LSC}(\overline D)\) is a
supersolution of
\[
 -\Mplus(D^2z)+G(x,z)=0\quad\text{in }D.
\]
If \(u\le v\) on \(\partial D\), then \(u\le v\) in \(D\).
\end{theorem}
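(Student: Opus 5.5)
The plan is the standard doubling-of-variables argument of Crandall--Ishii--Lions \cite{CrandallIshiiLions1992}. The only wrinkle is that \(G\) is merely nondecreasing (not strictly increasing) in \(z\), so I first perturb \(v\) into a \emph{strict} supersolution using the uniform ellipticity of \(\Mplus\). Since \(\overline D\) is compact, \(u\) is bounded above and \(v\) bounded below, and \(G\) is uniformly continuous on \(\overline D\times[-K,K]\) for every \(K\). Fix \(k>0\) and \(R\) with \(D\subset\{|x_1|<R\}\), and set
\[
\psi(x):=e^{kR}-e^{kx_1}\ \ (>0\text{ on }\overline D),\qquad v_\varepsilon:=v+\varepsilon\psi .
\]
Then \(D^2\psi=-k^2e^{kx_1}e_1\otimes e_1\), so \(\Mplus(\varepsilon D^2\psi)=-\varepsilon\lambda k^2e^{kx_1}\le-\varepsilon c_0\) on \(\overline D\), with \(c_0:=\lambda k^2e^{-kR}\).

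\textbf{Step 1: strict supersolution.} Let \(\phi\) touch \(v_\varepsilon\) from below at \(x_0\). Then \(\phi-\varepsilon\psi\) touches \(v\) from below at \(x_0\), so \(-\Mplus(D^2\phi-\varepsilon D^2\psi)+G(x_0,v(x_0))\ge0\). By subadditivity (from \cref{eq:pucci-difference-plus}),
\[
\Mplus(D^2\phi)\le\Mplus(D^2\phi-\varepsilon D^2\psi)+\Mplus(\varepsilon D^2\psi)\le \Mplus(D^2\phi-\varepsilon D^2\psi)-\varepsilon c_0 .
\]
Monotonicity of \(G\) and \(v_\varepsilon\ge v\) then give
\[
-\Mplus(D^2\phi)+G(x_0,v_\varepsilon(x_0))\ge\varepsilon c_0 .
\]
So \(v_\varepsilon\) is a strict supersolution, and \(u\le v\le v_\varepsilon\) on \(\partial D\). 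It suffices to prove \(u\le v_\varepsilon\) in \(D\) for every \(\varepsilon>0\), and then let \(\varepsilon\to0\).

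\textbf{Step 2: doubling variables.} Suppose \(\theta:=\max_{\overline D}(u-v_\varepsilon)>0\); this maximum is attained by semicontinuity. Maximize
\[
\Phi_j(x,y)=u(x)-v_\varepsilon(y)-\tfrac j2|x-y|^2
\]
over \(\overline D\times\overline D\) at \((x_j,y_j)\). The standard facts are that \(j|x_j-y_j|^2\to0\) and that \((x_j,y_j)\to(\hat x,\hat x)\) with \(u(\hat x)-v_\varepsilon(\hat x)=\theta\). Since \(\theta>0\) and \(u\le v_\varepsilon\) on \(\partial D\), we get \(\hat x\in D\), so \(x_j,y_j\) are interior for large \(j\). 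Moreover \(u(x_j)\to u(\hat x)\) and \(v_\varepsilon(y_j)\to v_\varepsilon(\hat x)\). Ishii's lemma then gives \(X\le Y\) with \((j(x_j-y_j),X)\in\overline J^{2,+}u(x_j)\) and \((j(x_j-y_j),Y)\in\overline J^{2,-}v_\varepsilon(y_j)\). The semijet formulation of the inequalities yields
\[
-\Mplus(X)+G(x_j,u(x_j))\le0,\qquad -\Mplus(Y)+G(y_j,v_\varepsilon(y_j))\ge\varepsilon c_0 .
\]
Since \(X\le Y\), monotonicity of \(\Mplus\) gives \(\Mplus(X)\le\Mplus(Y)\). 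Subtracting,
\[
\varepsilon c_0\le G(y_j,v_\varepsilon(y_j))-G(x_j,u(x_j)) .
\]

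\textbf{Step 3: contradiction and the main obstacle.} For large \(j\) we have \(u(x_j)>v_\varepsilon(y_j)\), since their difference tends to \(\theta>0\). Monotonicity then gives
\[
G(y_j,v_\varepsilon(y_j))-G(x_j,u(x_j))\le G(y_j,v_\varepsilon(y_j))-G(x_j,v_\varepsilon(y_j)).
\]
This tends to \(0\) by uniform continuity of \(G\) in \(x\) on \(\overline D\times[-K,K]\), because \(|x_j-y_j|\to0\) and \(v_\varepsilon(y_j)\) is bounded. That contradicts \(\varepsilon c_0>0\).

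The delicate point is Step 1. Without strict monotonicity in \(z\), the doubling argument alone produces only \(0\le o(1)\), which is no contradiction. This is why the strictness has to come from the operator, via a barrier \(\psi\) whose Hessian is uniformly negative in one direction. I expect the rest to be routine once Ishii's lemma is invoked as in \cite[Thm.~3.2]{CrandallIshiiLions1992}. Note that the Hessian-dependence has no \(x\)-dependence, so no modulus condition on the operator is needed beyond the continuity of \(G\).
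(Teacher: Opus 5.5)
Your proof is correct. It is the standard Crandall--Ishii--Lions doubling-of-variables argument with Ishii's lemma, which the paper does not carry out but only cites \cite{CrandallIshiiLions1992,CaffarelliCabre1995}. Your Step 1 is the detail that makes that citation fit the statement: the basic comparison theorem in \cite{CrandallIshiiLions1992} assumes strict monotonicity in the zeroth-order variable, and your perturbation $v+\varepsilon(e^{kR}-e^{kx_1})$, which draws strictness from the uniform ellipticity of $\Mplus$, is what extends it to merely nondecreasing $G$.
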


This is the standard viscosity comparison theorem
\cite{CrandallIshiiLions1992,CaffarelliCabre1995}.  The equation in
\cref{eq:problem} corresponds instead to
\[
 G(x,r)=-\mu r-a(x)r^p,
\]
which need not be nondecreasing in \(r\).  Consequently,
\cref{lem:comparison} cannot be invoked directly for the indefinite problem.\\

\begin{theorem}[Strong maximum principle and Hopf lemma]
\label{lem:smp-hopf}
Let \(D\) be regular domain, let \(c_0\in L^\infty(D)\), and let \(w\in C(D)\) be
nonnegative and satisfy
\begin{equation}\label{eq:smp-inequality}
 \Mminus(D^2w)+c_0w\le0\quad\text{in }D.
\end{equation}
Then either \(w\equiv0\) or \(w>0\) throughout \(D\).  Moreover, if the second
alternative holds, \(x_0\in\partial D\) is a \(C^2\) boundary point,
\(D\) has an interior tangent ball at \(x_0\), and
\(w\in C(D\cup\{x_0\})\) with \(w(x_0)=0\), then
\begin{equation}\label{eq:hopf-form}
 \limsup_{t\to0^+}\frac{w(x_0)-w(x_0-t\nu)}{t}<0,
\end{equation}
where \(\nu\) is the outer unit normal.  
\end{theorem}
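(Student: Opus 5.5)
We prove the statement immediately above, the strong maximum principle and Hopf lemma (\cref{lem:smp-hopf}), by the classical barrier method adapted to the Pucci setting.

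\textbf{Reduction to $c_0\le 0$.} Since $w\ge0$, we have $c_0w\ge -\norm{c_0}_{L^\infty}w$. Hence $w$ satisfies
\[
\Mminus(D^2w)-\norm{c_0}_{L^\infty}w\le0 \quad\text{in } D
\]
in the viscosity sense. It therefore suffices to treat a constant zeroth-order coefficient $-c$ with $c\ge0$.

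\textbf{Barrier and comparison.} The key tool is the radial barrier
\[
\phi(x)=e^{-\kappa|x-y|^2}-e^{-\kappa r^2}
\]
on the annulus $A=B_r(y)\setminus\overline{B_{r/2}(y)}$. Using the eigenvalue formula \cref{eq:pucci-minus-eigenvalues}, the Hessian of $\phi$ has one radial eigenvalue $(4\kappa^2\rho^2-2\kappa)e^{-\kappa\rho^2}$, where $\rho=|x-y|$, and $N-1$ tangential eigenvalues $-2\kappa e^{-\kappa\rho^2}$. This gives
\[
\Mminus(D^2\phi)\ge e^{-\kappa\rho^2}\bigl(4\lambda\kappa^2\rho^2-2\kappa\Lambda N\bigr).
\]
For $\rho\ge r/2$ and $\kappa$ large, this quantity is positive and dominates $c\phi$. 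Thus
\[
\Mminus(D^2\phi)-c\phi>0 \quad\text{in } A,
\]
so $\phi$ is a strict classical subsolution. The operator $X\mapsto-\Mminus(X)+cz$ is proper, so \cref{lem:comparison} applies to $z\mapsto -\Mminus(D^2z)+cz$; its proof for $\Mplus$ transfers verbatim via \cref{eq:pucci-duality}. If $w\ge\varepsilon\phi$ on $\partial A$, comparison gives $w\ge\varepsilon\phi$ in $A$.

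\textbf{Strong maximum principle.} Suppose the zero set $\{w=0\}$ and the positivity set $\{w>0\}$ are both nonempty in the connected set $D$. Pick a ball $B_r(y)\subset\{w>0\}$ touching $\{w=0\}$ at a point $z$ in the interior of $D$; this is the usual construction, taking the point of the positivity set nearest to the zero set. On the compact inner sphere $\partial B_{r/2}(y)$, $w$ has a positive minimum $m$. Set $\varepsilon=m/\phi|_{\partial B_{r/2}}$. On the outer sphere $\phi=0\le w$, so comparison yields $w\ge\varepsilon\phi$ in $A$.

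Now $\varepsilon\phi$ touches $w$ from below at $z$, since $w(z)=0=\varepsilon\phi(z)$. The supersolution inequality at $z$ then gives $\Mminus(D^2(\varepsilon\phi)(z))-c\cdot0\le0$. This contradicts the strict inequality for $\phi$. Hence $w\equiv0$ or $w>0$.

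\textbf{Hopf lemma.} Use the interior tangent ball $B_r(y)\subset D$ with $x_0\in\partial B_r(y)$. Shrinking the ball slightly keeps its closure in $D$ except at $x_0$. Since $w>0$ in $D$ by the first part, the same annulus comparison gives $w\ge\varepsilon\phi$ in $A$, with equality $0$ at $x_0$. Therefore
\[
w(x_0-t\nu)-w(x_0)\ge\varepsilon\bigl(\phi(x_0-t\nu)-\phi(x_0)\bigr).
\]
Dividing by $t$ and letting $t\to0^+$, the right-hand side tends to $\varepsilon\cdot 2\kappa r e^{-\kappa r^2}>0$, because $\partial_\nu\phi(x_0)=-2\kappa re^{-\kappa r^2}<0$. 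This yields \cref{eq:hopf-form}.

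\textbf{Main obstacle.} The delicate step is justifying the comparison between the viscosity supersolution $w$ and the smooth strict subsolution $\varepsilon\phi$ on the annulus. Because $\varepsilon\phi$ is $C^2$ with a strict inequality, this can be done by hand, bypassing the general \cref{lem:comparison}. Suppose $\varepsilon\phi-w$ has a positive interior maximum in $A$. After adding a constant, $\varepsilon\phi$ becomes a test function touching $w$ from below there. Ellipticity then forces
\[
\Mminus(D^2\varepsilon\phi)-c\,\varepsilon\phi\le -c\,(\varepsilon\phi-w)\le0
\]
at that point, contradicting strictness. This direct argument avoids any need for doubling of variables.
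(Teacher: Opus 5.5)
Your proof is correct, and it takes a different route from the paper, which gives no proof of \cref{lem:smp-hopf}. The paper treats it as a standard fact and points to \cite{MR1710100} for the strong maximum principle for general degenerate convex operators.

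You instead give a self-contained classical Hopf barrier proof. You first reduce to a constant coefficient $-c$ with $c\ge0$. This step also removes any ambiguity about how an $L^\infty$ coefficient is read in the viscosity sense, since everything is continuous afterwards. You then check from \cref{eq:pucci-minus-eigenvalues} that $e^{-\kappa|x-y|^2}-e^{-\kappa r^2}$ is a strict classical subsolution on $\{r/2\le|x-y|\le r\}$. Finally you compare by touching $w$ directly with this smooth strict subsolution, which is exactly where $c\ge0$ is used.

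Your route is transparent and independent of the general theory. It uses only the eigenvalue formula and the definition of viscosity supersolution, and it yields the explicit boundary slope $2\varepsilon\kappa r e^{-\kappa r^2}$. The cited reference buys generality (degenerate ellipticity) that the uniformly elliptic Pucci case does not need.

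Two details should be made explicit.
\begin{enumerate}
\item In the strong-maximum-principle step the ball must satisfy $\overline{B_r(y)}\subset D$. To get this, choose $y$ closer to the zero set than to $\partial D$; then the inner sphere is a compact subset of $D$ and the comparison on the closed annulus is legitimate.
\item Saying $\varepsilon\phi$ touches $w$ from below at $z$ requires $\varepsilon\phi\le w$ in a full neighbourhood of $z$, not only in $A$. This holds because $\phi<0\le w$ outside $B_r(y)$. Your choice of barrier makes it automatic, but it is the fact that makes the test-function step valid.
\end{enumerate}
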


For a general strong maximum principle for degenerate convex elliptic operators; see
\cite{MR1710100}.

\subsection{Principal half-eigenvalues}
\begin{proposition}[Proposition 1.1\cite{MR2124162}]\label{prop:first_halfeigenvalue}
Let $G\subset\mathbb{R}^{N}$ be a bounded regular domain. Then there exist two positive constants $\mu_{1}^{+}(G),$ $\mu_{1}^{-}(G),$
called the \emph{first half-eigenvalues} of the Pucci operator $\mathcal{M}^{+}_{\lambda,\Lambda}$, satisfying the following properties.
\begin{enumerate}
\item[(i)] There exist functions $\phi_{1}^{+},\,\phi_{1}^{-}
\in C^{2}(G)\cap C(\overline{G}),$ such that $(\mu_{1}^{+},\phi_{1}^{+})$ and $(\mu_{1}^{-},\phi_{1}^{-})$ solve
\begin{equation}\label{eq:halfeigenvalue}
\left\{
\begin{aligned}
-\mathcal{M}^{+}_{\lambda,\Lambda}(D^{2}\phi)&= \mu\,\phi &&\text{in}~G,\\
\phi&=0 &&\text{on }\partial G,
\end{aligned}
\right.
\end{equation}
and $\phi_{1}^{+}>0,$ $\phi_{1}^{-}<0$ in $G.$ Moreover, these half-eigenvalues are simple. More precisely, every positive solution of
\eqref{eq:halfeigenvalue} is of the form $(\mu_{1}^{+},\alpha\phi_{1}^{+}),$ with $\alpha>0,$ and an analogous statement holds for negative solutions.
\item[(ii)] The first half-eigenvalues admit the variational characterization
\[\mu_{1}^{+}=\inf_{A\in\mathcal{A}}\mu_{1}(A),\qquad\mu_{1}^{-}=\sup_{A\in\mathcal{A}}
\mu_{1}(A),\]
where $\mathcal{A}$ denotes the class of all symmetric measurable matrix-valued functions satisfying $\lambda I\leq A(x)\le\Lambda I$ a.e. in $G,$ and $\mu_{1}(A)$ is the principal eigenvalue of the corresponding linear uniformly elliptic operator in non-divergence form associated with $A.$
\item[(iii)] The first half-eigenvalues can also be characterized by
\begin{equation}\label{1eq:halfeigenvalue}
\left\{
\begin{aligned}
&\mu_{1}^{+}(G)=\sup_{u>0}
\operatorname*{ess\,inf}_{G}
\left(
\frac{-\mathcal{M}^{+}_{\lambda,\Lambda}(D^{2}u)}{u}
\right),\\
&\text{and}\\
&\mu_{1}^{-}(G)
=\sup_{u<0}\operatorname*{ess\,inf}_{G}\left(
\frac{-\mathcal{M}^{+}_{\lambda,\Lambda}(D^{2}u)}{u}
\right),
\end{aligned}
\right.
\end{equation}
where the supremum is taken over all functions $u\in W^{2,N}_{\mathrm{loc}}(G)\cap C(\overline{G}).$
\item[(iv)] Finally, the first half-eigenvalues admit the equivalent characterization
\[\mu_{1}^{+}=\sup\left\{\mu\in\mathbb{R}~:~\exists\,\phi>0
~\text{in}~~G~\text{satisfying}~
\mathcal{M}^{+}_{\lambda,\Lambda}(D^{2}\phi)+\mu\phi\le0\right\},\]
and
\[\mu_{1}^{-}=\sup\left\{\mu\in\mathbb{R}
~:~\exists\,\phi<0
~\text{in}~G~\text{satisfying}~
\mathcal{M}^{+}_{\lambda,\Lambda}(D^{2}\phi)+\mu\phi\ge0
\right\}.\]
\end{enumerate}
\end{proposition}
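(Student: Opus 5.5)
The statement is quoted from \cite{MR2124162}, so the plan below reconstructs its proof rather than proving it from scratch. The overall strategy is to \emph{define} $\mu_1^{+}(G)$ through the supersolution characterization in (iv). I would first show that this supremum is finite and is attained by a positive eigenfunction, and then derive (i)--(iii) from comparison arguments. Throughout, the only tools are the Pucci algebra of \cref{prop:pucci-algebra}, the difference inequality of \cref{lem:viscosity-difference}, and the strong maximum principle and Hopf lemma of \cref{lem:smp-hopf}. The case of $\mu_1^{-}$ is identical after replacing $\phi$ by $-\phi$ and using \cref{eq:pucci-duality}.

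\textbf{Step 1: finiteness and a refined maximum principle.} Let $\bar\mu$ be the supremum in (iv). Taking $\phi=\phi_{B}$, a radial positive eigenfunction of $-\Mplus$ on a large ball $B\supset\overline G$, gives $\bar\mu>0$. An ABP/Krylov--Safonov argument shows $\bar\mu<\infty$, since a positive strict supersolution cannot exist for $\mu$ large on a small ball inside $G$. Next I would prove that for $\mu<\bar\mu$ the operator $-\Mplus-\mu$ satisfies the maximum principle in $G$, in the sense that every subsolution $w$ with $w\le0$ on $\partial G$ satisfies $w\le 0$. To see this, take $\phi>0$ with $\Mplus(D^2\phi)+\mu'\phi\le0$ for some $\mu'\in(\mu,\bar\mu)$. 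Let $t^*=\inf\{t:\ w\le t\phi\}$, and suppose $t^*>0$. Then \cref{lem:viscosity-difference} applied to $t^*\phi-w$, together with \cref{lem:smp-hopf}, produces a contradiction at the touching point. With this maximum principle, Perron's method and a Leray--Schauder/degree argument give solvability of $-\Mplus(D^2u)=\mu u+1$, $u=0$ on $\partial G$, with $u>0$, for every $\mu<\bar\mu$.

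\textbf{Step 2: existence of $\phi_1^{+}$ (the main obstacle).} Take $\mu_k\uparrow\bar\mu$ and the corresponding solutions $u_k$. I would show $\norm{u_k}_\infty\to\infty$. Otherwise, \cref{lem:compactness} together with boundary estimates would give a limit $u$ solving the equation at $\bar\mu$ with right-hand side $+1$. That $u$ is a strict positive supersolution, which lets $\mu$ be raised slightly above $\bar\mu$, contradicting the definition of $\bar\mu$. Then normalize $v_k=u_k/\norm{u_k}_\infty$. These satisfy $-\Mplus(D^2v_k)=\mu_kv_k+\norm{u_k}_\infty^{-1}$ with uniform $C^\alpha(\overline G)$ bounds, using the $C^2$ boundary and barriers. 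A subsequence converges to $\phi_1^{+}\ge0$ with $\norm{\phi_1^{+}}_\infty=1$ solving \cref{eq:halfeigenvalue} with $\mu=\bar\mu$. Positivity follows from \cref{lem:smp-hopf}. Since $\Mplus$ is convex, Evans--Krylov gives $\phi_1^{+}\in C^{2}(G)$. The delicate points are the uniform estimates up to the boundary and the blow-up of $\norm{u_k}_\infty$; this is where I expect most of the work.

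\textbf{Step 3: simplicity and the remaining characterizations.} Let $\psi>0$ solve \cref{eq:halfeigenvalue} for some $\mu$. By (iv), $\mu\le\bar\mu$. Set $t^*=\sup\{t:\ t\phi_1^{+}\le\psi\}$, which is positive by Hopf. The difference $w=\psi-t^*\phi_1^{+}\ge0$ satisfies $\Mminus(D^2w)+\bar\mu w\le(\bar\mu-\mu)\psi$ by \cref{lem:viscosity-difference}. First I would use the symmetric argument, swapping roles and sliding $\psi$ under $\phi_1^{+}$, to force $\mu=\bar\mu$. With $\mu=\bar\mu$, \cref{lem:smp-hopf} gives $w\equiv0$, or else $w>0$ with an interior-normal-derivative bound that contradicts the maximality of $t^*$. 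This proves simplicity in (i).

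For (iii), $\phi_1^{+}$ gives ``$\ge$''. Conversely, any $u>0$ with $\operatorname*{ess\,inf}(-\Mplus(D^2u)/u)=\mu$ is a supersolution in the sense of (iv), so $\mu\le\bar\mu$. For (ii), choose a measurable maximizing field $A_0(x)$ with $\operatorname{tr}(A_0D^2\phi_1^{+})=\Mplus(D^2\phi_1^{+})$. Then $\mu_1(A_0)=\mu_1^{+}$ by uniqueness of the positive eigenfunction for linear operators. For any other $A\in\mathcal A$, the inequality $\operatorname{tr}(AD^2\phi_1^{+})\le\Mplus(D^2\phi_1^{+})$ makes $\phi_1^{+}$ a positive supersolution of $-L_A-\mu_1^{+}$, hence $\mu_1(A)\ge\mu_1^{+}$. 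Together these give $\mu_1^{+}=\inf_A\mu_1(A)$.
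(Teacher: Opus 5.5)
The paper gives no proof of this proposition; it only quotes \cite{MR2124162}, so your reconstruction has to stand on its own. Your overall plan is the Berestycki--Nirenberg--Varadhan route: define $\mu_1^+$ as the supremum in (iv), prove a maximum principle below it, and let the solutions of $-\Mplus(D^2u)=\mu u+1$ blow up as $\mu\uparrow\bar\mu$. Step~2, the two-sided sliding in Step~3, and your derivations of (ii) and (iii) are sound.

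The gap is the maximum principle in Step~1, on which everything else rests. In (iv) the competitor $\phi$ is only positive in the open set $G$. You genuinely need such competitors: in Step~2 the function $u$ that pushes $\mu$ past $\bar\mu$ vanishes on $\partial G$. With such a $\phi$, your sliding argument assumes what has to be proved. It assumes that $t^*=\inf\{t: w\le t\phi\}$ is finite, and that for $t^*>0$ the function $z=t^*\phi-w$ vanishes at some point of $G$. Neither is automatic, because $w$ and $\phi$ may both tend to $0$ at $\partial G$.

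The critical case is $z>0$ in $G$ while $\inf_G z/\phi=0$, with the infimum approached only at $\partial G$. There the strong maximum principle gives nothing. Hopf's lemma bounds $z$ below by $c\operatorname{dist}(\cdot,\partial G)$, not by a multiple of $\phi$, since an arbitrary supersolution satisfies no estimate $\phi\le C\operatorname{dist}(\cdot,\partial G)$. So you cannot lower $t^*$, and the step fails as written. The listed tools (Pucci algebra, difference inequality, strong maximum principle, Hopf) do not by themselves close it.

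The standard remedy is the maximum principle in domains of small measure, which follows from the ABP estimate; this is exactly the device of Berestycki--Nirenberg--Varadhan.
\begin{itemize}
\item Choose a compact $K\subset G$ with $|G\setminus K|$ so small that $\Mplus(D^2\cdot)+\mu\,\cdot$ satisfies the maximum principle in $G\setminus K$.
\item Since $\phi\ge c_K>0$ on $K$, you have $w\le t\phi$ on $K$ for $t$ large. Applying the small-measure principle to the subsolution $w-t\phi$ on $G\setminus K$ extends this to all of $G$, so $t^*<\infty$.
\item If $t^*>0$, the strict inequality $\Mminus(D^2z)+\mu z\le-(\mu'-\mu)t^*\phi<0$ and the strong maximum principle give $z>0$ in $G$, hence $z\ge\varepsilon_K>0$ on $K$.
\item Then $w\le(t^*-\eta)\phi$ on $K$ for small $\eta>0$. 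The small-measure principle propagates this to $G$, contradicting the minimality of $t^*$.
\end{itemize}

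Alternatively, run Steps~1--2 with competitors positive on $\overline G$.
\begin{itemize}
\item The contact point is then forced into $G$, because $w\le 0<t^*\phi$ on $\partial G$.
\item In Step~2, use $u+\delta$ instead of $u$; it satisfies $-\Mplus(D^2(u+\delta))\ge(\bar\mu+\varepsilon)(u+\delta)$ for small $\delta,\varepsilon>0$.
\item Afterwards, show that admitting competitors positive only in $G$ does not raise the supremum. Slide $\phi_1^+$ under such a $\phi$ as in your simplicity argument. The Hopf lemma for $\phi$, together with the bound $\phi_1^+\le C\operatorname{dist}(\cdot,\partial G)$, makes the sliding parameter positive.
\end{itemize}
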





\subsection{Exact rescaling and limiting geometry}

The following statements are the external nonexistence inputs used in a
standard blow-up argument.  We record their exponent ranges exactly.

\begin{theorem}[Whole-space Liouville input; Cutri--Leoni]
\label{thm:whole-space}
Assume \(N\ge3\) and define
\begin{equation}\label{eq:cutri-leoni-exponent}
 \NCL:=1+\frac{\lambda}{\Lambda}(N-1),
 \qquad
 \pCL:=\frac{\NCL}{\NCL-2}.
\end{equation}
Let \(c>0\) and \(1<p\le\pCL\)  (or $1<p<\infty $ if $\NCL\leq2${)}.  If \(v\ge0\) is a continuous viscosity
supersolution of
\begin{equation}\label{eq:whole-space-liouville-inequality}
 -\Mplus(D^2v)\ge c\,v^p\quad\text{in }\R^N,
\end{equation}
then \(v\equiv0\).
\end{theorem}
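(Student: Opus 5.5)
The plan is the classical Cutr\`i--Leoni radial comparison argument: build an explicit radial $\Mplus$-harmonic function, use it to bound $v$ from below at infinity, and play that bound against a rescaled half-eigenvalue estimate. Suppose $v\not\equiv0$. Since $-\Mplus(D^2v)\ge c v^p\ge0$, we have $\Mminus(D^2v)\le\Mplus(D^2v)\le0$. \cref{lem:smp-hopf} (with $c_0=0$) then gives $v>0$ in $\R^N$. Set
\[
 m(r):=\min_{|x|=r}v .
\]
By the minimum principle for $\Mplus$-supersolutions, $m$ is nonincreasing and positive.

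The radial barrier is $\Phi(x)=|x|^{-\beta}$. Its Hessian has the radial eigenvalue $\beta(\beta+1)r^{-\beta-2}>0$ and the tangential eigenvalue $-\beta r^{-\beta-2}$ with multiplicity $N-1$. By \cref{eq:pucci-plus-eigenvalues},
\[
 \Mplus(D^2\Phi)=\beta\bigl(\Lambda(\beta+1)-\lambda(N-1)\bigr)r^{-\beta-2},
\]
which vanishes exactly for $\beta=\NCL-2$. So for $\NCL>2$, $\Phi=|x|^{2-\NCL}$ solves $\Mplus(D^2\Phi)=0$ in $\R^N\setminus\{0\}$. On the annulus $\{1<|x|<R\}$, compare $v$ (a supersolution, since $-\Mplus(D^2v)\ge0$) with
\[
 w=m(1)\,\frac{|x|^{2-\NCL}-R^{2-\NCL}}{1-R^{2-\NCL}},
\]
using \cref{lem:comparison} with $G\equiv0$. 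Letting $R\to\infty$ gives
\[
 m(r)\ge m(1)\,r^{2-\NCL}\quad\text{for } r\ge1 .
\]
When $\NCL\le2$, the same computation shows that $|x|^{2-\NCL}$ (or $\log|x|$ if $\NCL=2$) is a subsolution growing to $+\infty$. Comparing $v$ with $m(1)-\varepsilon(|x|^{2-\NCL}-1)$ or $m(1)-\varepsilon\log|x|$ and letting $\varepsilon\to0$ gives $m(r)\ge m(1)>0$ for all $r\ge1$.

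Next comes the rescaled eigenvalue bound. Let $A=\{1<|x|<4\}$ and $v_R(x)=v(Rx)$. By \cref{eq:pucci-homogeneity},
\[
 -\Mplus(D^2v_R)\ge cR^2v_R^{\,p}\ge cR^2m(4R)^{p-1}v_R\quad\text{in }A,
\]
since $v_R\ge m(4R)$ on $A$ by monotonicity of $m$. The characterization (iv) in \cref{prop:first_halfeigenvalue} then yields
\[
 cR^2m(4R)^{p-1}\le\mu_1^{+}(A)
\]
for all $R\ge1$. In the case $\NCL\le2$, $m$ is bounded below, so the left side grows like $R^2$, a contradiction for every $p>1$. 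In the case $\NCL>2$, inserting the lower bound gives $R^{\,2-(\NCL-2)(p-1)}\le C$. This is a contradiction exactly when $p<\NCL/(\NCL-2)=\pCL$.

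The main obstacle is the critical exponent $p=\pCL$, where the powers balance and the above estimate only gives boundedness. There I would improve the lower bound to
\[
 m(r)\,r^{\NCL-2}\to\infty\quad\text{as } r\to\infty,
\]
which again contradicts $R^2m(4R)^{p-1}\le C$. To do this, I would exploit the source term on dyadic annuli. Since $-\Mplus(D^2v)\ge c\,m(2^{k+1})^{p}$ on $\{2^k<|x|<2^{k+1}\}$, adding a suitable multiple of a radial correction (a solution of $-\Mplus(D^2\psi)=\mathbf 1_{\text{annulus}}$ decaying like $|x|^{2-\NCL}$) to the barrier shows that $m(r)r^{\NCL-2}$ increases by a fixed amount $\delta$ across each dyadic annulus. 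At $p=\pCL$ this $\delta$ stays comparable to a fixed power of the already-proved lower bound, so the increments do not decay and summing over $k$ gives divergence. Alternatively, one can obtain a logarithmic gain $m(r)\ge c\,r^{2-\NCL}(\log r)^{\theta}$ and conclude the same way. This borderline step is where the real work of \cite{CutriLeoni2000} lies; the subcritical range is routine once the barrier $|x|^{2-\NCL}$ is identified.
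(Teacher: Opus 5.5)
The paper does not prove this statement; it quotes it from \cite[Theorem~4.1]{CutriLeoni2000} (see also \cite[Theorem~1.3]{QuaasSirakov2006}). You reconstruct the Cutr\`i--Leoni scheme: a Hadamard-type lower bound $m(r)\ge m(1)r^{2-\NCL}$ from the radial $\Mplus$-harmonic function, played against the rescaled half-eigenvalue bound $cR^2m(4R)^{p-1}\le\mu_1^+(A)$. For $1<p<\pCL$, and for every $p>1$ when $\NCL\le2$, this is correct.

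One justification in the case $\NCL\le2$ is misstated. For $m(1)-\varepsilon(\Phi-1)$ to be a subsolution you need $\Mplus(-D^2\Phi)=-\Mminus(D^2\Phi)\ge0$. It is not enough that $\Phi$ itself is a subsolution; already for the Laplacian that gives the wrong sign. The needed fact holds with equality: $\Mminus(D^2|x|^{2-\NCL})=0$ for $\NCL<2$, and $\Mminus(D^2\log|x|)=0$ for $\NCL=2$. So your comparison functions are exactly $\Mplus$-harmonic.

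The genuine gap is the critical exponent $p=\pCL$, which the statement includes and which you only outline. Your mechanism adds $\delta\psi$, with $-\Mplus(D^2\psi)=\mathbf 1_{\text{annulus}}$, to the barrier $\phi=A\,r^{2-\NCL}$. This presupposes superposition, which fails for $\Mplus$. Since $-\Mplus$ is superadditive, sums of supersolutions are supersolutions, but $\phi+\delta\psi$ need not be a subsolution of $-\Mplus(D^2\cdot)\le\delta\mathbf 1$.

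Concretely, the radial $\psi$ is constant on the inner ball, so $\psi_r=0$ on the inner sphere and $\psi_{rr}<0$ on the annulus, while $\phi_{rr}>0$. For small $\delta$ one computes $-\Mplus(D^2(\phi+\delta\psi))=\delta\bigl(1+(\Lambda-\lambda)|\psi_{rr}|\bigr)>\delta$. This can be patched: shrink $\delta$, lower $A$ to restore the inner boundary value, and check that $r^{\NCL-2}\psi$ strictly increases across the annulus. As written, though, the fixed increment per dyadic annulus is asserted, not proved.

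What closes the case cleanly is that $\Mplus$ acts linearly on radial functions with $z_r\le0\le z_{rr}$:
\[
\Mplus(D^2z)=\Lambda z_{rr}+\lambda(N-1)z_r/r.
\]
So the improved barrier should be built inside that cone. Put $\beta=\NCL-2$. At $p=\pCL$ one has $\beta p=\NCL$, so your lower bound gives $-\Mplus(D^2v)\ge c\,m(1)^p|x|^{-\NCL}$ for $|x|>1$. For $r=|x|\ge1$ set
\[
 z=m(1)\,r^{-\beta}+\varepsilon\,r^{-\beta}\log r .
\]
If $0<\varepsilon<m(1)\beta(\beta+1)/(2\beta+1)$, then $z_r<0<z_{rr}$ on $[1,\infty)$. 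Using $\lambda(N-1)=\Lambda(\beta+1)$, one computes
\[
 \Mplus(D^2z)=\Lambda z_{rr}+\lambda(N-1)\frac{z_r}{r}=-\varepsilon\Lambda\beta\,r^{-\NCL}.
\]
Take also $\varepsilon\Lambda\beta\le c\,m(1)^p$. Then $z-z(R)$ is a classical subsolution of $-\Mplus(D^2\cdot)\le c\,m(1)^p|x|^{-\NCL}$ in $\{1<|x|<R\}$, and it lies below $v$ on both spheres. So \cref{lem:comparison} gives $v\ge z-z(R)$. Letting $R\to\infty$ yields $r^{\beta}m(r)\ge m(1)+\varepsilon\log r\to\infty$.

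At $p=\pCL$ your eigenvalue bound reads $\bigl((4R)^{\beta}m(4R)\bigr)^{p-1}\le16\,\mu_1^{+}(A)/c$, which is a contradiction. This is the $\theta=1$ case of the logarithmic gain you mention; with it your proof is complete.
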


This is the Pucci maximal-operator case of
\cite[Theorem~4.1]{CutriLeoni2000}; it is also stated in precisely this form
in \cite[Theorem~1.3]{QuaasSirakov2006}.  The coefficient \(c\) is
inessential: replacing \(v\) by \(c^{-\frac{1}{p-1}}v\) reduces it to \(c=1\).

\begin{theorem}[Theorem 1.5\cite{QuaasSirakov2006}]
\label{thm:half-space}
Assume \(N\ge3\) and set
\begin{equation}\label{eq:half-space-exponent}
 \pHS:=
 \begin{cases}
  \dfrac{\lambda(N-2)+\Lambda}{\lambda(N-2)-\Lambda},
     &\lambda(N-2)>\Lambda,\\[7pt]
  +\infty,&\lambda(N-2)\leq\Lambda.
 \end{cases}
\end{equation}
Let \(c>0\) and \(1<p\le\pHS\).  The problem
\begin{equation}\label{eq:half-space-liouville-problem}
 \begin{cases}
  -\Mplus(D^2v)=c\,v^p&\text{in }\R^N_+:=\{x_N>0\},\\
  v\ge0&\text{in }\R^N_+,\\
  v=0&\text{on }\partial\R^N_+
 \end{cases}
\end{equation}
has no nonzero bounded solution.
\end{theorem}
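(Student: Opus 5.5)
The plan is the monotonicity-plus-dimension-reduction scheme of Berestycki--Caffarelli--Nirenberg, adapted to \(\Mplus\). First I show that every nonzero bounded solution of \cref{eq:half-space-liouville-problem} is strictly increasing in \(x_N\). Then I pass to the limit \(x_N\to+\infty\), which produces an entire solution in one dimension less. The key observation is that the threshold \(\pHS\) is exactly the Cutr\`i--Leoni exponent of \cref{thm:whole-space} in dimension \(N-1\). Indeed, replacing \(N\) by \(N-1\) in \cref{eq:cutri-leoni-exponent} gives \(\widetilde N=1+\frac{\lambda}{\Lambda}(N-2)\) and
\[
\frac{\widetilde N}{\widetilde N-2}=\frac{\Lambda+\lambda(N-2)}{\lambda(N-2)-\Lambda}.
\]
Also, \(\widetilde N\le 2\) exactly when \(\lambda(N-2)\le\Lambda\), which matches the case \(\pHS=+\infty\).

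\emph{Step 1 (positivity and moving planes).} Let \(v\not\equiv0\). Since \(v\) is bounded, \(c\,v^p\le C v\), so \(\Mminus(D^2v)+Cv\le \Mplus(D^2 v)+Cv\le0\). Then \cref{lem:smp-hopf} gives \(v>0\) in \(\R^N_+\), and the Hopf inequality \cref{eq:hopf-form} holds on \(\partial\R^N_+\). For \(\mu>0\) put \(\Sigma_\mu=\{0<x_N<\mu\}\), \(v_\mu(x)=v(x',2\mu-x_N)\) and \(w_\mu=v_\mu-v\) on \(\Sigma_\mu\). By \cref{lem:viscosity-difference}, and since \(s\mapsto cs^p\) is Lipschitz on \([0,\norm v_\infty]\), we get
\[
\Mminus(D^2w_\mu)+c_\mu(x)w_\mu\le0 \quad\text{where } w_\mu<0,
\]
with \(\|c_\mu\|_\infty\) bounded independently of \(\mu\). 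Moreover \(w_\mu\ge0\) on \(\partial\Sigma_\mu\).

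For small \(\mu\), I show \(w_\mu\ge0\) by a narrow-strip maximum principle. Because the strip is unbounded, I argue as follows. Suppose \(\inf w_\mu<0\) and take \(x_k\) with \(w_\mu(x_k)\to\inf w_\mu\). Translate horizontally, \(\tilde v_k(x)=v(x+(x_k',0))\), and extract a limit via \cref{lem:compactness}; this yields an interior negative minimum attained in the limit. At that minimum, a standard barrier \(\cos\)-type test function in the narrow direction gives a contradiction once \(\mu^2\|c_\mu\|_\infty\) is small.

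Next set \(\mu_0=\sup\{\mu: w_{\mu'}\ge0 \text{ for all } \mu'<\mu\}\). Suppose \(\mu_0<\infty\). By the strong maximum principle applied to translated limits, \(w_{\mu_0}>0\) in \(\Sigma_{\mu_0}\), and this persists in every horizontal-translation limit: a limit cannot vanish identically, since its boundary values at \(x_N=0\) equal \(\tilde v(x',2\mu_0)>0\), which is in turn ensured by the Harnack inequality \cref{lem:harnack}. Now take \(\mu_k\downarrow\mu_0\) with negative minima of \(w_{\mu_k}\). Translating horizontally and passing to the limit produces a limit solution with \(w_{\mu_0}\) vanishing at an interior point or with zero normal slope at the boundary. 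This contradicts the strong maximum principle or Hopf (\cref{lem:smp-hopf}); near \(x_N=0\) one also uses the Hopf bound for \(v\) itself. Hence \(\mu_0=\infty\), so \(v\) is nondecreasing in \(x_N\), and strictly increasing by the strong maximum principle.

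\emph{Step 2 (dimension reduction).} Since \(v\) is bounded and monotone in \(x_N\), the functions \(v(\cdot,\cdot+t)\) converge locally uniformly, by \cref{lem:compactness}, to a function \(w(x')\) that is independent of \(x_N\). It is a bounded viscosity solution of \(-\Mplus(D^2w)=c\,w^p\). For functions independent of \(x_N\), the Hessian has a zero eigenvalue in the \(e_N\) direction, so \(\Mplus\) in \(\R^N\) reduces to the \((N-1)\)-dimensional Pucci operator acting on \(D^2_{x'}w\). Therefore \(w\ge0\) is a supersolution of \cref{eq:whole-space-liouville-inequality} in \(\R^{N-1}\). By \cref{thm:whole-space} in dimension \(N-1\), using the exponent identification above (and the ``any \(p\)'' case when \(\widetilde N\le2\)), \(w\equiv0\). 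Monotonicity gives \(0<v\le w=0\), a contradiction, so \(v\equiv0\).

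The main obstacle is Step 1. The domain \(\Sigma_\mu\) is unbounded, there is no linear equation for \(w_\mu\), and extremal points may escape to horizontal infinity. Each use of the maximum principle must therefore go through horizontal translates and \cref{lem:compactness}, checking that the limiting configuration is nondegenerate; uniform positivity of \(v\) away from the boundary, from Harnack, is what makes this work. A further point to verify is that the narrow-strip step needs only that the coefficient bound be independent of the translation.
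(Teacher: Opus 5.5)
The paper gives no proof of this statement. It imports it from Quaas--Sirakov and adds only the remark that bounded viscosity solutions are $C^{2,\beta}_{\mathrm{loc}}$, so the classical result applies. Your overall plan (monotonicity in $x_N$, then $x_N\to\infty$ and Cutr\`i--Leoni in $\R^{N-1}$) is the natural one. Your identification of $\pHS$ with the Cutr\`i--Leoni exponent in dimension $N-1$ is correct, and Step~2 is fine.

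The gap is in Step~1, at the point you flag yourself. You claim that a horizontal-translation limit $\tilde v$ satisfies $\tilde v(x',2\mu_0)>0$ `by the Harnack inequality', and that Harnack gives `uniform positivity of $v$ away from the boundary'. It does not. Harnack, with constants uniform under translation since $cv^{p-1}$ is bounded, only compares $\sup$ and $\inf$ on unit balls. Chaining gives $v(y)\le C^{1+|y-z|}v(z)$ at heights bounded away from $0$. This is fully compatible with $\inf_{x'\in\R^{N-1}}v(x',t)=0$, that is, with $v(x_k'+\cdot)\to0$ locally uniformly along some $|x_k'|\to\infty$. Since the nonlinearity $cs^p$ vanishes at $s=0$, the equation does not forbid this zero limit. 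In that degenerate case the translated limits of $w_{\mu_k}$ are $\tilde w\equiv0$. The `interior zero' and `zero normal slope' you extract are then automatic, and neither the strong maximum principle nor Hopf gives a contradiction. The same defect affects your claim that $w_{\mu_0}>0$ persists in translation limits.

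To close the gap you must either prove $\inf_{x'}v(x',t)>0$ for $t>0$, which is not automatic, or handle degenerate translates by normalization, as the paper does for the weighted problem in \cref{lem:no-stopping}. The normalization goes as follows.
\begin{itemize}
\item Set $\varepsilon_k:=v(x_k',t_0)$, $v_k:=v(x_k'+\cdot)$ and $V_k:=v_k/\varepsilon_k$. Then $-\Mplus(D^2V_k)=c\,v_k^{p-1}V_k$ with coefficient tending to $0$.
\item A limit $V>0$ solves $\Mplus(D^2V)=0$ in $\R^N_+$ with $V=0$ on $\partial\R^N_+$.
\item The function $W(x)=V(x',2\mu_0-x_N)-V(x)$ is positive in $\Sigma_{\mu_0}$, since its bottom values are $V(x',2\mu_0)>0$, and $\partial_NW<0$ on $\{x_N=\mu_0\}$.
\item The normalized differences $w_{\mu_k}(x_k'+\cdot)/\varepsilon_k\to W$ then contradict the sign information at the bad points. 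Your mean-value argument, with interior $C^1$ convergence, handles bad points approaching the top plane.
\end{itemize}
This needs bounds on $V_k$ up to $\{x_N=0\}$, because bad points may approach the bottom of the strip. Interior Harnack chains do not supply these bounds. You need a Carleson (boundary Harnack) estimate, which is available because $v\in C^2$ solves a linear nondivergence equation with coefficients in $\mathcal A_{\lambda,\Lambda}$.

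Two smaller points. First, your first display has the wrong sign: $cv^p\le Cv$ gives $\Mplus(D^2v)+Cv\ge0$. However, $\Mminus(D^2v)\le\Mplus(D^2v)=-cv^p\le0$ already gives the strong maximum principle with $c_0=0$. Second, for $N=3$ Step~2 applies Cutr\`i--Leoni in $\R^2$, which is outside the hypothesis $N\ge3$ of \cref{thm:whole-space} as stated. There you need their $\widetilde N\le2$ result.
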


  It is
written there for classical bounded solutions.  A bounded continuous
viscosity solution of \cref{eq:half-space-liouville-problem} is locally
H\"older continuous; hence \(v^p\) is locally H\"older continuous, and the
interior \(C^{2,\alpha}\) regularity for the convex Pucci operator places it
in the source's classical class.  The same reference records both this
interior regularity and H\"older continuity up to a zero boundary.





\textbf{Motivation for Nonlinear Liouville-type Theorem :}\\
Since we are working on the problem 
\begin{equation*}
    \left\{
    \begin{aligned}
        -\mathcal{M}_{\lambda,\Lambda}^{+}(D^2u)&=\mu u+a(x)u^p&&\text{in}&&\Omega,\\
        u&=0&&\text{on}&&\partial\Omega.
    \end{aligned}
    \right.
\end{equation*}
Suppose, for
example, that a sequence of maximum points $x_n$ approaches a smooth
interface $\Gamma$ on which $a$ vanishes and that, on one side of
$\Gamma$,
\[
a(x)=\alpha(x)[d(x,\Gamma)]^{\gamma},
\qquad \gamma>0,
\]
where $\alpha$ is positive and continuous. Setting
\[
M_n=u_n(x_n),\qquad
\rho_n=M_n^{-\frac{p-1}{2+\gamma}},
\]
and introducing the rescaled functions
\[
v_n(y)=\frac{u_n(x_n+\rho_n y)}{M_n},
\]
one is naturally led, after passing to a subsequence, to an entire
limiting problem of the form
\[
-\mathcal{M}^{+}_{\lambda,\Lambda}(D^2v)
       =h(y_1)v^p
       \qquad\text{in }\mathbb{R}^{N}.
\]
Thus, the usual autonomous Liouville theorems are no longer
sufficient near the interface where $a$ vanishes. The weighted
Liouville theorem proved in this paper is precisely the tool needed
to exclude such limiting profiles.
\section{The weighted Liouville theorem}\label{sec:weighted-liouville}

In this section we prove \cref{thm:weighted-liouville}.  The moving-plane
strategy is inspired by the semilinear argument of Du and Li
\cite{DuLi2005}.  Two points require special care for the Pucci operator:
the reflected difference satisfies extremal inequalities rather than a
linear equation, and the auxiliary function used on the unbounded reflected
half-space must be a strict supersolution for \(\Mplus\).
\begin{figure}[ht]
\centering

\begin{tikzpicture}[
    scale=1.1,
    >=Stealth,
    every node/.style={font=\large}
]


\coordinate (O)  at (0,0);
\coordinate (X)  at (2,2.3);
\coordinate (T)  at (4,0);
\coordinate (XR) at (6,2.3);


\draw[->, thick]
    (-1.2,0) -- (8.2,0)
    node[right] {$x_1$};

\draw[->, thick]
    (0,-1.75) -- (0,4.0)
    node[above left] {$\mathbb{R}^{N-1}$};

\draw[blue, very thick]
    (4,-1.5) -- (4,4.0);

\node[blue, above right] at (4,3.75)
    {$T_\tau$};


\fill[red] (X) circle (3pt);
\node[above left] at (X) {$x$};

\fill[red] (XR) circle (3pt);
\node[above right] at (XR) {$x^\tau$};

\draw[thick]
    (X) -- (XR);


\draw[dashed]
    (2,0) -- (2,2.3);

\draw[dashed]
    (6,0) -- (6,2.3);

\draw[thick] (2,-0.10) -- (2,0.10);
\draw[thick] (4,-0.10) -- (4,0.10);
\draw[thick] (6,-0.10) -- (6,0.10);


\draw[<->, thick]
    (0,-0.55) -- (2,-0.55);

\node[below] at (1,-0.55)
    {$x_1$};


\draw[<->, thick]
    (2,-0.55) -- (4,-0.55);

\node[below] at (3,-0.55)
    {$\tau-x_1$};


\draw[<->, thick]
    (0,-1.00) -- (4,-1.00);

\node[below] at (2,-1.00)
    {$\tau$};


\draw[<->, thick]
    (0,-1.50) -- (6,-1.50);

\node[below] at (3,-1.50)
    {$2\tau-x_1$};

\end{tikzpicture}

\caption{Reflection of the point $x$ with respect to the hyperplane $T_\tau$.}
\label{fig:reflection-Tmu}
\end{figure}

\subsection{Regularity and the reflected inequality}

We first justify working pointwise.  Let \(v\) be a bounded nonnegative
viscosity solution of \cref{eq:weighted-entire}.  Its right-hand side is
locally bounded, so the interior estimates give
\(v\in C^{1,\beta}_{\mathrm{loc}}(\R^N)\) for some \(\beta\in(0,1)\).
 From L1 of Assumption \ref{ass:liouville-data} \(h\in C^{0,\alpha}_{\mathrm{loc}}(\R)\) and \(f\in C^1\), the
function \(h(x_1)f(v)\) is locally H\"older continuous. The interior
Schauder estimate for the convex Pucci operator therefore yields
\begin{equation}\label{eq:liouville-regularity}
 v\in C^{2,\beta}_{\mathrm{loc}}(\R^N),
\end{equation}
after possibly decreasing \(\beta\); see
\cite{CaffarelliCabre1995} and
\cite[Theorem~2.3]{QuaasSirakov2006}.

\noindent Define
\[
 c(x):=
 \begin{cases}
  h(x_1)\dfrac{f(v(x))}{v(x)},&v(x)>0,\\[6pt]
  0,&v(x)=0.
 \end{cases}
\]
Because \(f(s)/s\to f'(0)=0\) as \(s\to0^{+}\), we have
\(c\in L^\infty_{\mathrm{loc}}(\R^N)\).  Thus the strong minimum
principle, applied locally, shows that
\begin{equation}\label{eq:zero-or-positive}
 v\equiv0\quad\text{or}\quad v>0\text{ in }\R^N.
\end{equation}

We henceforth assume that \(v>0\).

To apply a nonlinear Liouville-type result using the moving plane method, for \(\tau\in\R\) (see \ref{fig:reflection-Tmu}), set
\begin{equation}\label{eq:reflection-notation}
 \Sigma_\tau:=\{x=(x_1,x')\in\R^N:x_1<\tau\},\qquad
 x^\tau:=(2\tau-x_1,x'),
\end{equation}
and define
\[
 v_\tau(x):=v(x^\tau),\qquad w_\tau(x):=v_\tau(x)-v(x)
 \quad\text{in }\Sigma_\tau.
\]
It is easy to check that
\[
 -\Mplus(D^2v_\tau)=h(2\tau-x_1)f(v_\tau).
\]
Applying Lemma \ref{lem:viscosity-difference}, or equivalently
\cref{eq:pucci-difference-plus} in view of
\cref{eq:liouville-regularity}, we obtain
\begin{equation}\label{eq:reflected-pucci-inequality}
 \mathcal{M}_{\lambda,\Lambda}^{-}(D^2w_\tau)
 \le h(x_1)f(v)-h(2\tau-x_1)f(v_\tau)
 \quad\text{in }\Sigma_\tau.
\end{equation}
This is the basic reflected inequality used throughout the proof.\\

\subsection{A strict Pucci barrier on the reflected half-space}
The following calculation replaces the harmonic logarithmic barrier used
for the Laplacian.
\begin{remark}
Observe that when $\lambda=\Lambda$, Pucci's maximal operator reduces to a
constant multiple of the Laplacian, namely
\[
\mathcal{M}^{+}_{\lambda,\lambda}(D^2u)
=\lambda\,\operatorname{tr}(D^2u)
=\lambda\Delta u.
\]
Hence, in this case the argument reduces to the corresponding semilinear
Laplacian setting. Therefore, in what follows, it is enough to consider the
genuinely fully nonlinear case
\[
0<\lambda<\Lambda.
\]
\end{remark}

\begin{lemma}[Logarithmic Pucci barrier]\label{lem:pucci-log-barrier}
There exists \(\varepsilon_0>0\), depending only on
\(N,\lambda,\Lambda\), such that, for every \(\tau\in\R\),
\begin{equation}\label{eq:barrier-definition}
 g_\tau(x):=\ln(\tau+3-x_1)+\varepsilon_0\sum_{j=2}^N
                 \ln((\tau+3-x_1)^2+x_j^2)
\end{equation}
satisfies
\begin{align}
 g_\tau&>0&&\text{in }\Sigma_\tau,
 \label{eq:barrier-positive}\\
 g_\tau(x)&\longrightarrow+\infty
     &&\text{as }|x|\to\infty,\quad x\in\Sigma_\tau,
 \label{eq:barrier-coercive}\\
 \Mplus(D^2g_\tau)&<0
     &&\text{in }\Sigma_\tau.
 \label{eq:barrier-pucci-sign}
\end{align}
\end{lemma}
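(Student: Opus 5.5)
The plan is to treat all three properties through the single variable \(s:=\tau+3-x_1\), which satisfies \(s>3\) on \(\Sigma_\tau\). In this variable
\[
 g_\tau=\ln s+\varepsilon_0\sum_{j=2}^N\ln(s^2+x_j^2),
\]
with \(\varepsilon_0\) chosen at the end depending only on \(N,\lambda,\Lambda\).

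\emph{Positivity and coercivity.} Since \(s>3\), we have \(\ln s>\ln 3>0\) and \(\ln(s^2+x_j^2)\ge\ln 9>0\), which gives \cref{eq:barrier-positive}. For \cref{eq:barrier-coercive}, let \(|x|\to\infty\) inside \(\Sigma_\tau\). Because \(x_1<\tau\), this forces either \(s\to\infty\) or \(\max_{j\ge2}|x_j|\to\infty\) along the sequence. Every term of \(g_\tau\) is bounded below by a positive constant. In the first case the term \(\ln s\) diverges; in the second, the corresponding term \(\varepsilon_0\ln(s^2+x_j^2)\) diverges. In either case \(g_\tau\to+\infty\).

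\emph{Hessian decomposition.} We have \(\partial_{x_1}s=-1\), so \(D^2\ln s=-s^{-2}e_1\otimes e_1\). For each \(j\ge2\), the function \(\phi_j:=\ln(s^2+x_j^2)\) depends only on the two variables \((x_1,x_j)\). In those variables it equals \(2\ln r_j\) with \(r_j:=(s^2+x_j^2)^{1/2}\), which is the two-dimensional fundamental solution. Hence
\[
 D^2\phi_j=\frac{2}{r_j^2}\bigl(P_j-2\,\nu_j\otimes\nu_j\bigr),
\]
where \(P_j\) is the orthogonal projection onto \(\mathrm{span}\{e_1,e_j\}\) and \(\nu_j\) is the unit vector in that plane proportional to \((-s,x_j)\). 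Consequently \(D^2\phi_j\) has eigenvalues \(2/r_j^2\) and \(-2/r_j^2\), each simple, and all its other eigenvalues are zero. By \cref{eq:pucci-plus-eigenvalues},
\[
 \Mplus(D^2\phi_j)=\frac{2(\Lambda-\lambda)}{r_j^2}\le\frac{2(\Lambda-\lambda)}{s^2},
 \qquad
 \Mplus(-s^{-2}e_1\otimes e_1)=-\frac{\lambda}{s^2}.
\]

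\emph{Combining the pieces.} Apply the right inequality of \cref{eq:pucci-difference-plus} with \(Y\) replaced by \(-Y\). Together with \cref{eq:pucci-homogeneity}, this gives subadditivity, \(\Mplus(X+Y)\le\Mplus(X)+\Mplus(Y)\). Therefore
\[
 \Mplus(D^2g_\tau)\le\frac{-\lambda+2\varepsilon_0(N-1)(\Lambda-\lambda)}{s^2}.
\]
Choose \(\varepsilon_0:=\lambda/\bigl(4(N-1)(\Lambda-\lambda)\bigr)\), which is admissible since the preceding remark lets us assume \(\lambda<\Lambda\). Then \(\Mplus(D^2g_\tau)\le-\lambda/(2s^2)<0\) in \(\Sigma_\tau\), which is \cref{eq:barrier-pucci-sign}.

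The only delicate point is the eigenvalue computation for \(D^2\phi_j\). One must check that its positive eigenvalue is exactly compensated in trace, so that \(\Mplus\) sees only the gap \(\Lambda-\lambda\), and that the bound \(r_j\ge s\) makes the strictly negative contribution \(-\lambda/s^2\) of \(\ln s\) dominate uniformly in \(\tau\) and \(x\). Everything else is elementary.
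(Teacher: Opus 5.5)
Your proof is correct. It differs from the paper's in how the Hessian is split.

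\textbf{The paper's route.} It writes $D^2g_\tau$ out entrywise and decomposes it as $E+C+D+\sum_{j}F_j$:
\begin{itemize}
\item $E=-s^{-2}e_1\otimes e_1$ comes from $\ln s$;
\item $C$ and $D$ are the diagonal parts of the $\phi_j$-contributions, separated by the numerators $s^2$ and $x_j^2$;
\item the off-diagonal pieces $F_j$ carry the $(1,j)$ and $(j,1)$ entries and are controlled by AM--GM, $|sx_j|\le r_j^2/2$.
\end{itemize}
Each of $C$, $D$, $F_j$ is trace-free, so each costs $(\Lambda-\lambda)$ times the sum of its positive eigenvalues. Cutting the single matrix $D^2\phi_j$ into three such pieces therefore replaces its exact value $\Mplus(D^2\phi_j)=2(\Lambda-\lambda)/r_j^2$ by
$2(\Lambda-\lambda)(|s|+|x_j|)^2/r_j^4\le 4(\Lambda-\lambda)/r_j^2$. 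This is why the paper arrives at $-\lambda+4\varepsilon_0(N-1)(\Lambda-\lambda)$ and needs $\varepsilon_0<\lambda/(4(N-1)(\Lambda-\lambda))$.

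\textbf{Your route.} You group by the summands of $g_\tau$ and recognize $\phi_j$ as twice the planar fundamental solution in the $(x_1,x_j)$-plane. This gives the eigenvalues $\pm 2/r_j^2$ exactly, with no AM--GM step. It makes transparent that only the ellipticity gap $\Lambda-\lambda$ can enter. It also enlarges the admissible range to $\varepsilon_0<\lambda/(2(N-1)(\Lambda-\lambda))$. In particular, your choice $\varepsilon_0=\lambda/(4(N-1)(\Lambda-\lambda))$ is exactly the endpoint excluded in the paper, yet it is legitimate in your scheme and gives the quantitative bound $\Mplus(D^2g_\tau)\le-\lambda/(2s^2)$. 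The paper's computation is purely mechanical; yours is shorter and sharper.

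\textbf{One slip to fix.} Replacing $Y$ by $-Y$ in the right inequality of \eqref{eq:pucci-difference-plus} yields $\Mplus(X+Y)\ge\Mplus(X)+\Mminus(Y)$, which is a lower bound, not subadditivity. Subadditivity comes instead from replacing $X$ by $X+Y$ there, giving $\Mplus(X+Y)-\Mplus(Y)\le\Mplus(X)$. Alternatively, it follows directly from the supremum in \eqref{eq:pucci-sup-inf}.
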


\begin{proof}
Since \(x_1<\tau\), we have \(\tau+3-x_1>3\).  Hence
\cref{eq:barrier-positive,eq:barrier-coercive} are immediate. Now, our claim is to see existence of $\varepsilon_0$ for which \cref{eq:barrier-pucci-sign} holds.
It is easy to get the following identities
\begin{equation*}
\left\{
\begin{aligned}
&(i)~\frac{\partial g_{\tau}(x)}{\partial x_1}=-\frac{1}{(\tau+3-x_1)}-\varepsilon_0\sum_{j=2}^N\frac{2 (\tau+3-x_1)}{[(\tau+3-x_1)^2+ x_j^2]}\\
&(ii)~\frac{\partial g_{\tau}(x)}{\partial x_j}=\varepsilon_0\frac{2 x_j}{[(\tau+3-x_1)^2+ x_j^2]}\\
&(iii)~\frac{\partial^2 g_{\tau}(x)}{\partial x_1^2}=-\frac{1}{(\tau+3-x_1)^2}-2\varepsilon_0\sum_{j=2}^N\frac{(\tau+3-x_1)^2- x_j^2}{[(\tau+3-x_1)^2+ x_j^2]^2}\\
 &(iv)~\frac{\partial^2 g_{\tau}(x)}{\partial x_j^2}=2\varepsilon_0\frac{ (\tau+3-x_1)^2- x_j^2}{[(\tau+3-x_1)^2+ x_j^2]^2}\\
&(v)~~\frac{\partial^2 g_{\tau}(x)}{\partial x_1\partial x_j}=4\varepsilon_0\frac{ (\tau+3-x_1)x_j}{[(\tau+3-x_1)^2+ x_j^2]^2}\\
&\text{and for}~i\neq 1~\text{and}~i\neq j~\text{we have}\\
&(vi)~\frac{\partial^2 g_{\tau}(x)}{\partial x_i\partial x_j}=0.
\end{aligned}
\right.
\end{equation*}
By using the above values we find
\[D^2g_{\tau}(x)= \begin{bmatrix}
  -\frac{1}{(\tau+3-x_1)^2}-2\varepsilon_0 \sum_{j=2}^N\frac{(\tau+3-x_1)^2-x_j^2}{[(\tau+3-x_1)^2+ x_j^2]^2} & 4\varepsilon_0\frac{ (\tau+3-x_1)x_2}{[(\tau+3-x_1)^2+ x_2^2]^2} & \dots & 4\varepsilon_0\frac{ (\tau+3-x_1)x_N}{[(\tau+3-x_1)^2+ x_N^2]^2}\\ 
  4\varepsilon_0\frac{ (\tau+3-x_1)x_2}{[(\tau+3-x_1)^2+ x_2^2]^2} & 2\varepsilon_0\frac{ (\tau+3-x_1)^2-x_2^2}{[(\tau+3-x_1)^2+ x_2^2]^2} & \dots & 0\\
  \vdots & \vdots & \vdots & \vdots\\
  4\varepsilon_0\frac{ (\tau+3-x_1)x_N}{[(\tau+3-x_1)^2+ x_N^2]^2} & 0 & \dots & 2\varepsilon_0\frac{ (\tau+3-x_1)^2-x_N^2}{[(\tau+3-x_1)^2+ x_N^2]^2}
\end{bmatrix}.\]
For convenience we can write the Hessian as $D^2g_{\tau}(x)= A+B,$ where 
\begin{equation*}
\begin{aligned}
&A= \begin{bmatrix}
  -\frac{1}{(\tau+3-x_1)^2} & 4\varepsilon_0\frac{ (\tau+3-x_1)x_2}{[(\tau+3-x_1)^2+ x_2^2]^2} & \dots & 4\varepsilon_0\frac{ (\tau+3-x_1)x_N}{[(\tau+3-x_1)^2+ x_N^2]^2}\\ 
  4\varepsilon_0\frac{ (\tau+3-x_1)x_2}{[(\tau+3-x_1)^2+ x_2^2]^2} & 0 & \dots & 0\\
  \vdots & \vdots & \vdots & \vdots\\
  
  4\varepsilon_0\frac{ (\tau+3-x_1)x_N}{[(\tau+3-x_1)^2+ x_N^2]^2} & 0 & \dots & 0
\end{bmatrix}\\
\text{and}\\
&B= \begin{bmatrix}
-2\varepsilon_0 \sum_{j=2}^N\frac{(\tau+3-x_1)^2-x_j^2}{[(\tau+3-x_1)^2+ x_j^2]^2} & 0 & \dots & 0\\ 
0 & 2\varepsilon_0\frac{ (\tau+3-x_1)^2-x_2^2}{[(\tau+3-x_1)^2+ x_2^2]^2} & \dots & 0\\
\vdots & \vdots & \vdots & \vdots\\
0 & 0 & \dots & 2\varepsilon_0\frac{ (\tau+3-x_1)^2-x_N^2}{[(\tau+3-x_1)^2+ x_N^2]^2}
\end{bmatrix}.
\end{aligned}
\end{equation*}
Furthermore we can write $B=C+D,$ where 
\begin{equation*}
\begin{aligned}
&C= \begin{bmatrix}
-2\varepsilon_0 \sum_{j=2}^N\frac{(\tau+3-x_1)^2}{[(\tau+3-x_1)^2+ x_j^2]^2} & 0 & \dots & 0\\ 
0 & 2\varepsilon_0\frac{ (\tau+3-x_1)^2}{[(\tau+3-x_1)^2+ x_2^2]^2} & \dots & 0\\
\vdots & \vdots & \vdots & \vdots\\
0 & 0 & \dots & 2\varepsilon_0\frac{ (\tau+3-x_1)^2}{[(\tau+3-x_1)^2+ x_N^2]^2}
\end{bmatrix}\\
\text{and}\\
&D= \begin{bmatrix}
  2\varepsilon_0\sum_{j=2}^N\frac{x_j^2}{[(\tau+3-x_1)^2+ x_j^2]^2} & 0 & \dots & 0\\ 
  0 & -2\varepsilon_0\frac{ x_2^2}{[(\tau+3-x_1)^2+ x_2^2]^2} & \dots & 0\\
  \vdots & \vdots & \vdots & \vdots\\
  0 & 0 & \dots & -2\varepsilon_0\frac{ x_N^2}{[(\tau+3-x_1)^2+ x_N^2]^2}
\end{bmatrix}.
\end{aligned}
\end{equation*}
Note that the Matrix $C$ has only one negative eigenvalue while $D$ has only one positive eigenvalue. We use this fact in the following computation.  Taking the above fact into account we get 
\begin{equation*}
\left\{
\begin{aligned}
\mathcal{M}_{\lambda,\Lambda}^+(B)&\leq\mathcal{M}_{\lambda,\Lambda}^+(C)+\mathcal{M}_{\lambda,\Lambda}^+(D)\\&\leq \lambda\left[-2\varepsilon_0\sum_{j=2}^N\frac{ (\tau+3-x_1)^2}{[(\tau+3-x_1)^2+ x_j^2]^2} \right]+\Lambda\left[2\varepsilon_0\sum_{j=2}^N\frac{ (\tau+3-x_1)^2}{[(\tau+3-x_1)^2+ x_j^2]^2} \right]\\
&+\Lambda\left[2\varepsilon_0\sum_{j=2}^N\frac{ x_j^2}{[(\tau+3-x_1)^2+ x_j^2]^2} \right]+\lambda\left[-2\varepsilon_0\sum_{j=2}^N\frac{ x_j^2}{[(\tau+3-x_1)^2+ x_j^2]^2} \right]
\end{aligned}
\right.
\end{equation*}
Rearranging the terms in the above expression we have 
\begin{equation}
\left\{
\begin{aligned}
\mathcal{M}_{\lambda,\Lambda}^{+}(B)&\leq (\Lambda-\lambda)2\varepsilon_0\sum_{j=2}^N\frac{(\tau+3-x_1)^2+ x_j^2}{[(\tau+3-x_1)^2+ x_j^2]^2}\\
&\leq(\Lambda-\lambda)2\varepsilon_0\sum_{j=2}^N\frac{1}{(\tau+3-x_1)^2+ x_j^2}\\
&\leq(\Lambda-\lambda)2\varepsilon_0\frac{(N-1)}{(\tau+3-x_1)^2}.
\end{aligned}
\right.
\end{equation}
In order to compute $\mathcal{M}_{\lambda,\Lambda}^{+}(A)$ we write $A=E+F,$ where 
\begin{equation*}
\begin{aligned}
&E= \begin{bmatrix}
  -\frac{1}{(\tau+3-x_1)^2} & 0 & \dots & 0\\ 
  0 & 0 & \dots & 0\\
  \vdots & \vdots & \vdots & \vdots\\
  
  0 & 0 & \dots & 0
\end{bmatrix}\\
\text{and}\\
&F=\begin{bmatrix}
 0 & 4\varepsilon_0\frac{ (\tau+3-x_1)x_2}{[(\tau+3-x_1)^2+ x_2^2]^2} & \dots &4\varepsilon_0\frac{ (\tau+3-x_1)x_N}{[(\tau+3-x_1)^2+ x_N^2]^2}\\ 
  4\varepsilon_0\frac{ (\tau+3-x_1)x_2}{[(\tau+3-x_1)^2+ x_2^2]^2} & 0 & \dots & 0\\
  \vdots & \vdots & \vdots & \vdots\\
  4\varepsilon_0\frac{ (\tau+3-x_1)x_N}{[(\tau+3-x_1)^2+ x_N^2]^2} & 0 & \dots & 0
\end{bmatrix}.
\end{aligned}
\end{equation*}
Matrix $F$ can be written as $F=F_2+F_3+\dots+F_N,$ where 
\begin{equation*}
\begin{aligned}
&F_2= \begin{bmatrix}
 0 & 4\varepsilon_0\frac{ (\tau+3-x_1)x_2}{[(\tau+3-x_1)^2+ x_2^2]^2} & \dots &0\\ 
  4\varepsilon_0\frac{ (\tau+3-x_1)x_2}{[(\tau+3-x_1)^2+ x_2^2]^2} & 0 & \dots & 0\\
  \vdots & \vdots & \vdots & \vdots\\
  
  0 & 0 & \dots & 0
\end{bmatrix}\\
&F_3= \begin{bmatrix}
 0 &0 & 4\varepsilon_0\frac{ (\tau+3-x_1)x_3}{[(\tau+3-x_1)^2+ x_3^2]^2} & \dots &0\\ 
 0 &0 &0  & \dots &0\\ 
  4\varepsilon_0\frac{ (\tau+3-x_1)x_3}{[(\tau+3-x_1)^2+ x_3^2]^2} & 0  & 0 & \dots & 0\\
  \vdots & \vdots & \vdots & \vdots\\
  0 & 0 & 0 & \dots & 0
\end{bmatrix}\\
&\vdots\\
&F_N= \begin{bmatrix}
 0 &0  & \dots &4\varepsilon_0\frac{ (\tau+3-x_1)x_N}{[(\tau+3-x_1)^2+ x_N^2]^2}\\ 
 0  &0  & \dots &0\\ 
  0  & 0 & \dots & 0\\
  \vdots & \vdots & \vdots & \vdots\\
  4\varepsilon_0\frac{ (\tau+3-x_1)x_N}{[(\tau+3-x_1)^2+ x_N^2]^2} & 0  & \dots & 0
\end{bmatrix}
\end{aligned}
\end{equation*}
Observe that the eigenvalues of $F_i$ are $4\varepsilon_0\frac{ (\tau+3-x_1)x_i}{[(\tau+3-x_1)^2+ x_i^2]^2},$ $-4\varepsilon_0\frac{ (\tau+3-x_1)x_i}{[(\tau+3-x_1)^2+ x_i^2]^2}$ and $0$ with multiplicity $N-2$ for $i=2,3,\dots,N.$ Observe also that
\[\frac{(\tau+3-x_1)^2+x_i^2}{2}\geq(\tau+3-x_1)x_i\]
where we have used the AM-GM inequality with $a=(\tau+3-x_1)^2>0$ and $b= x_i^2\geq0.$ The above inequality further can be rewritten as follows:
\begin{equation}
  \frac{1}{2}\geq\frac{|(\tau+3-x_1)x_i|}{(\tau+3-x_1)^2+ x_i^2}.  
\end{equation}
Now using the subadditive properties of Pucci's operator we have 
\begin{equation*}
\left\{
\begin{aligned}
  \mathcal{M}_{\lambda,\Lambda}^{+}(F)&\leq\mathcal{M}_{\lambda,\Lambda}^{+}(F_{2})+\mathcal{M}_{\lambda,\Lambda}^{+}(F_{3})+\dots+\mathcal{M}_{\lambda,\Lambda}^{+}(F_{N})\\
    &\leq(\Lambda-\lambda)\sum_{j=2}^N\frac{4\varepsilon_0 |(\tau+3-x_1)x_j|}{[(\tau+3-x_1)^2+ x_j^2]^2}\\
    &\leq(\Lambda-\lambda)\sum_{j=2}^N\frac{1}{2}\frac{4\varepsilon_0}{(\tau+3-x_1)^2+ x_j^2}\\
    &\leq(\Lambda-\lambda)\sum_{j=2}^N\frac{2\varepsilon_0}{(\tau+3-x_1)^2}=(\Lambda-\lambda)\frac{2\varepsilon_0(N-1)}{(\tau+3-x_1)^2}.
\end{aligned}
\right.
\end{equation*}
Consequently, we have  
\begin{equation*}
\left\{
\begin{aligned}
  \mathcal{M}_{\lambda,\Lambda}^+(A)&\leq\mathcal{M}_{\lambda,\Lambda}^+(E)+\mathcal{M}_{\lambda,\Lambda}^+(F) \\
  &\leq\lambda\left(\frac{-1}{(\tau+3-x_1)^2}\right)+(\Lambda-\lambda)\frac{2\varepsilon_0(N-1)}{(\tau+3-x_1)^2}
\end{aligned}
\right.
\end{equation*}
Putting all the calculations together we have 
\begin{equation*}
\left\{
\begin{aligned}
\mathcal{M}_{\lambda,\Lambda}^+(D^2g_{\tau}(x))&\leq\mathcal{M}_{\lambda,\Lambda}^+(A)+\mathcal{M}_{\lambda,\Lambda}^+(B)\\
    &\leq\lambda\left(\frac{-1}{(\tau+3-x_1)^2}\right)+(\Lambda-\lambda)\frac{2\varepsilon_0(N-1)}{(\tau+3-x_1)^2}+(\Lambda-\lambda)2\varepsilon_0\frac{(N-1)}{(\tau+3-x_1)^2}\\
    &=\frac{1}{(\tau+3-x_1)^2}\left[-\lambda+4\varepsilon_0(N-1)(\Lambda-\lambda)\right].
\end{aligned}
\right.
\end{equation*}
Choosing $0<\varepsilon_0<\frac{\lambda}{4(N-1)(\Lambda-\lambda)}$ if $\lambda<\Lambda$, we get

  $$\mathcal{M}_{\lambda,\Lambda}^+(D^2g_{\tau}(x))<0.$$  

\end{proof}

\subsection{Starting the moving plane}

Since \(h\) is nondecreasing and \(h(0)=0\),
\begin{equation}\label{eq:h-sign}
 h(t)\le0\quad\text{for}~~t<0,\qquad h(t)>0\quad\text{for}~~t>0.
\end{equation}

\begin{lemma}[Initial-plane lemma]\label{lem:start-plane}
For every \(\tau\le0\),
\[
 w_\tau\ge0\quad\text{in }\Sigma_\tau.
\]
\end{lemma}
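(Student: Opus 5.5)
The plan is a maximum-principle argument on the unbounded half-space \(\Sigma_\tau\), made coercive by the logarithmic barrier of \cref{lem:pucci-log-barrier}. Fix \(\tau\le0\) and \(\varepsilon>0\), and set
\[
 z_\varepsilon:=w_\tau+\varepsilon g_\tau\quad\text{in }\overline{\Sigma_\tau}.
\]
Since \(v\) is bounded, \(w_\tau\) is bounded. By \cref{eq:barrier-coercive}, \(z_\varepsilon\to+\infty\) as \(|x|\to\infty\) in \(\Sigma_\tau\). On the plane \(T_\tau=\{x_1=\tau\}\) we have \(w_\tau=0\), and \(g_\tau>0\) there (since \(\tau+3-x_1=3\)), so \(z_\varepsilon>0\) on \(T_\tau\).

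Hence, if \(\inf_{\Sigma_\tau}z_\varepsilon<0\), the infimum is attained at an interior point \(x_0\in\Sigma_\tau\). At \(x_0\) we have \(w_\tau(x_0)<-\varepsilon g_\tau(x_0)<0\), that is, \(v_\tau(x_0)<v(x_0)\). By \cref{eq:liouville-regularity}, \(z_\varepsilon\in C^2\), so \(D^2z_\varepsilon(x_0)\ge0\).

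The key step is to check that the right-hand side of \cref{eq:reflected-pucci-inequality} is \(\le0\) at \(x_0\). Write \(t=x_1<\tau\le0\) and \(s=2\tau-x_1\). Then \(s\ge t\), so \(h(s)\ge h(t)\) by (L1), and \(h(t)\le0\) by \cref{eq:h-sign}. There are two cases.

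\begin{itemize}
\item If \(s\ge0\), then \(h(s)f(v_\tau)\ge0\) and \(h(t)f(v)\le0\), so \(h(t)f(v)-h(s)f(v_\tau)\le0\).
\item If \(s<0\), then \(h(t)\le h(s)\le0\). Since \(f\) is nondecreasing and nonnegative, \(v(x_0)>v_\tau(x_0)\) gives \(f(v)\ge f(v_\tau)\ge0\). Therefore
\[
h(t)f(v)\le h(s)f(v)\le h(s)f(v_\tau).
\]
\end{itemize}

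Thus in both cases \(\Mminus(D^2w_\tau(x_0))\le0\).

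For the lower bound, apply the superadditivity of \(\Mminus\) (equivalently \cref{eq:pucci-difference-minus} together with \cref{eq:pucci-duality}):
\[
 \Mminus(D^2w_\tau)\ge \Mminus(D^2z_\varepsilon)+\Mminus(-\varepsilon D^2g_\tau)
 =\Mminus(D^2z_\varepsilon)-\varepsilon\Mplus(D^2g_\tau).
\]
At \(x_0\), the first term is \(\ge0\) because \(D^2z_\varepsilon(x_0)\ge0\). The second term is \(>0\) by \cref{eq:barrier-pucci-sign}. Hence \(\Mminus(D^2w_\tau(x_0))>0\), which contradicts the previous step.

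It follows that \(z_\varepsilon\ge0\) in \(\Sigma_\tau\) for every \(\varepsilon>0\). Letting \(\varepsilon\to0\) pointwise gives \(w_\tau\ge0\) in \(\Sigma_\tau\).

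The main obstacle is the sign analysis of the reflected right-hand side. It has to be carried out only at the contact point, where the inequality \(v_\tau<v\) is available, and this is the only place where the monotonicity of \(f\) is needed. The loss of compactness at infinity is handled entirely by the strict barrier \(g_\tau\). The Pucci nonlinearity enters only through the one-sided inequality \(\Mminus(X+Y)\ge\Mminus(X)+\Mminus(Y)\); no linearity of the operator is used.
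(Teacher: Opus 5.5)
Your proof is correct and is essentially the paper's argument: the same case split ($2\tau-x_1\ge0$ versus $2\tau-x_1<0$) gives $\Mminus(D^2w_\tau)\le0$ at points where $w_\tau<0$, and this is played against the strict supersolution $g_\tau$ of \cref{lem:pucci-log-barrier}. The only difference is cosmetic: you minimize $w_\tau+\varepsilon g_\tau$ using superadditivity of $\Mminus$ and then let $\varepsilon\to0$, whereas the paper minimizes the quotient $w_\tau/g_\tau$ directly and uses $\Mminus(X+Y)\le\Mminus(X)+\Mplus(Y)$.
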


\begin{proof}
We first observe that
\begin{equation}\label{eq:negative-set-inequality}
 w_\tau(x)\leq0\quad\Longrightarrow\quad
 \mathcal{M}_{\lambda,\Lambda}^{-}(D^2w_\tau(x))\le0,
 \qquad\tau\le0.
\end{equation}
Indeed, put \(x_1^\tau=2\tau-x_1\).  If \(x_1\le2\tau\), then
\(x_1<0\le x_1^\tau\), and therefore
\[
 h(x_1)f(v)-h(x_1^\tau)f(v_\tau)\le0.
\]
If \(2\tau<x_1<\tau\), then
\(x_1<x_1^\tau<0\).  Since \(w_\tau\leq0\), monotonicity of \(f\) gives
\begin{align*}
 h(x_1)f(v)-h(x_1^\tau)f(v_\tau)
 &\le h(x_1^\tau)\bigl(f(v)-f(v_\tau)\bigr)\le0.
\end{align*}
In either case, \cref{eq:negative-set-inequality} follows from
\cref{eq:reflected-pucci-inequality}.

Suppose now, by contradiction, that
\(\inf_{\Sigma_\tau}w_\tau<0\).  Let \(g_\tau\) be given by
Lemma \ref{lem:pucci-log-barrier} and set \(W_\tau:=w_\tau/g_\tau\).  Since
\(W_\tau(x)\to0\) as \(|x|\to\infty\), then  the negative number
\[
 m:=\inf_{\Sigma_\tau}W_\tau
\]
is attained at some \(x_0\in\Sigma_\tau\).  At \(x_0\),
\(DW_\tau=0\), \(D^2W_\tau\ge0\), and
\[
 D^2w_\tau-mD^2g_\tau=g_\tau D^2W_\tau\ge0.
\]
Using \cref{eq:pucci-difference-minus}, positive homogeneity, and
\cref{eq:negative-set-inequality,eq:barrier-pucci-sign}, we obtain
\begin{align*}
 0
 &\le\mathcal{M}_{\lambda,\Lambda}^{-}(D^2w_\tau-mD^2g_\tau)\\
 &\le\mathcal{M}_{\lambda,\Lambda}^{-}(D^2w_\tau)
       +\Mplus(-mD^2g_\tau)\\
       &=\mathcal{M}_{\lambda,\Lambda}^{-}(D^2w_\tau)
       -m\Mplus(D^2g_\tau)<0
\end{align*}
because \(-m>0\).  This contradiction proves the lemma.
\end{proof}

\subsection{Strict comparison at admissible positions}

Define
\begin{equation}\label{eq:admissible-positions}
 \mathcal A:=\bigl\{\tau\in\R:
 w_\sigma\ge0\text{ in }\Sigma_\sigma
 \text{ for every }\sigma\le\tau\bigr\},
 \qquad
 \tau_*:=\sup\mathcal A.
\end{equation}
By Lemma \ref{lem:start-plane}, \(0\in\mathcal A\), so \(\tau_*\ge0\).

\begin{lemma}[Strictness and the boundary derivative]
\label{lem:continuation}
If either $\tau_{*}<+\infty$ and $\tau\in(-\infty,\tau_{*}]$ or $\tau_{*}=+\infty$ and $\tau\in(-\infty,+\infty),$ then 
\[w_{\tau}(x)>0~~ \text{in}~~ \Sigma_{\tau},~~\frac{\partial w_{\tau}(x)} {\partial x_1}<0~~\text{for}~~x\in T_{\tau}.\]\\
\end{lemma}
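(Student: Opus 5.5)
Fix $\tau$ in the stated range. By the definition of $\mathcal A$ and $\tau_*=\sup\mathcal A$, we already know $w_\tau\ge0$ in $\Sigma_\tau$. For $\tau<\tau_*$ this is immediate; for $\tau=\tau_*<\infty$ we pass to the limit $\sigma\uparrow\tau_*$ using continuity of $v$. The plan is to upgrade this to strict positivity with the strong maximum principle of \cref{lem:smp-hopf}, and then to obtain the sign of the normal derivative on $T_\tau$ from its Hopf part.

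\emph{Step 1: a linear inequality for $w_\tau$.} Since $w_\tau\ge0$, we have $v_\tau\ge v$ in $\Sigma_\tau$, and for $x\in\Sigma_\tau$ also $x_1<2\tau-x_1$. Split the right-hand side of \cref{eq:reflected-pucci-inequality} as
\[
h(x_1)f(v)-h(2\tau-x_1)f(v_\tau)=\bigl(h(x_1)-h(2\tau-x_1)\bigr)f(v)+h(2\tau-x_1)\bigl(f(v)-f(v_\tau)\bigr).
\]
The first term is $\le0$ because $h$ is nondecreasing and $f\ge0$. For the second term write $f(v)-f(v_\tau)=-\,b(x)\,w_\tau$, where $b=\int_0^1 f'(v+s w_\tau)\,ds$. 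Since $f\in C^1$ and $v$ is bounded, $b\ge0$ is locally bounded, and $h(2\tau-x_1)$ is locally bounded as well. Hence
\[
\Mminus(D^2w_\tau)+c_0(x)w_\tau\le0\quad\text{in }\Sigma_\tau,\qquad c_0:=h(2\tau-x_1)\,b(x)\in L^\infty_{loc}.
\]
The sign of $c_0$ is irrelevant here, since \cref{lem:smp-hopf} allows an arbitrary bounded $c_0$ when $w\ge0$.

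\emph{Step 2: strict positivity.} Apply \cref{lem:smp-hopf} on an exhausting family of bounded connected subdomains of $\Sigma_\tau$ (balls or boxes), on each of which $c_0$ is bounded. This gives either $w_\tau\equiv0$ or $w_\tau>0$ in $\Sigma_\tau$. We rule out $w_\tau\equiv0$ as follows. If it held, then $v(2\tau-x_1,x')=v(x)$ for all $x$, so both sides of \cref{eq:reflected-pucci-inequality} would vanish. Indeed, equality of $v$ and $v_\tau$ together with their equations gives
\[
h(x_1)f(v)=h(2\tau-x_1)f(v)\quad\text{for all }x\in\Sigma_\tau.
\]
Now $f(v)>0$ because $v>0$ and $f>0$ on $(0,\infty)$. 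Choosing $x_1\in(-\infty,\tau)$ with $0<x_1$ (possible when $\tau>0$), or more generally using that $h$ is strictly increasing on $(0,\infty)$ while $2\tau-x_1$ ranges over $(\tau,\infty)$, we get $h(x_1)<h(2\tau-x_1)$ for some $x_1<\tau$ with $2\tau-x_1>0$. This is a contradiction. In fact such $x_1$ can be taken with $x_1<0<2\tau-x_1$, where $h(x_1)\le0<h(2\tau-x_1)$, so the argument works for every $\tau$. Hence $w_\tau>0$ in $\Sigma_\tau$.

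\emph{Step 3: boundary derivative.} On $T_\tau$ we have $w_\tau=0$, and $\Sigma_\tau$ is a half-space, so every boundary point has an interior tangent ball and $w_\tau\in C^{2}$ up to $T_\tau$ by \cref{eq:liouville-regularity}. The Hopf part of \cref{lem:smp-hopf}, applied in a ball inside $\Sigma_\tau$ tangent at $x\in T_\tau$ (with outer normal $\nu=e_1$), gives $\partial_{x_1}w_\tau(x)<0$. Since $w_\tau$ is $C^1$, the limsup in \cref{eq:hopf-form} is the genuine derivative.

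\emph{Main obstacle.} The delicate point is the endpoint $\tau=\tau_*$ (and uniformity as $\tau\to\infty$ when $\tau_*=+\infty$). There, $w_{\tau_*}\ge0$ must be obtained by a limit, and the local boundedness of $c_0$ must not depend on global bounds of $h$. Since $h$ may be unbounded, everything is done locally on bounded subdomains, which is enough for both the strong maximum principle and Hopf.
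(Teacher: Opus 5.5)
Your proposal is correct and follows essentially the same route as the paper. Both arguments reduce to $\Mminus(D^2w_\tau)+c_0w_\tau\le0$ with a difference-quotient coefficient (your integral average $b$ is exactly the paper's $q_\tau$), apply the strong maximum principle on bounded subdomains, exclude $w_\tau\equiv0$ at a point with $x_1<0<2\tau-x_1$, and finish with the Hopf lemma on $T_\tau$. The only cosmetic differences are that you leave the sign of $c_0$ free instead of shifting by $c_D\ge\|a_\tau^-\|_{L^\infty(D)}$, and you rule out $w_\tau\equiv0$ through the exact equations rather than the reflected inequality.
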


\begin{proof}
By definition of $\tau_{*}$ and continuity ( in case $\tau=\tau_{*}<+\infty$ ), we have $w_{\tau}(x)\geq0$ in $\Sigma_{\tau}.$\\
It is enough to show $w_{\tau}$ is never zero in $\Sigma_{\tau}.$ For this 
define the nonnegative, locally bounded difference quotient
\[
 q_\tau(x):=
 \begin{cases}
  \dfrac{f(v_\tau(x))-f(v(x))}{v_\tau(x)-v(x)},&w_\tau(x)>0,\\[7pt]
  f'(v(x)),&w_\tau(x)=0.
 \end{cases}
\]
Since \(h(x_1)\le h(2\tau-x_1)\) in \(\Sigma_\tau\),
\cref{eq:reflected-pucci-inequality} gives
\begin{equation}\label{eq:strictness-linearized}
 \mathcal{M}_{\lambda,\Lambda}^{-}(D^2w_\tau)
 +h(2\tau-x_1)q_\tau(x)w_\tau\le0
 \quad\text{in }\Sigma_\tau.
\end{equation}
Put \(a_\tau(x):=h(2\tau-x_1)q_\tau(x)\).  This coefficient is locally
bounded.  On a bounded subdomain \(D\Subset\Sigma_\tau\), choose
\(c_D\ge\|a_\tau^-\|_{L^\infty(D)}\).  Since \(w_\tau\ge0\),
\cref{eq:strictness-linearized} implies
\[
  \mathcal{M}_{\lambda,\Lambda}^{-}(D^2w_\tau)-c_Dw_\tau
 \le-\bigl(a_\tau+c_D\bigr)w_\tau\le0
 \quad\text{in }D.
\]
Thus \cref{lem:smp-hopf} applies on every such \(D\).  Propagating through
overlapping bounded subdomains of the connected half-space
\(\Sigma_\tau\), we conclude that either \(w_\tau>0\) or
\(w_\tau\equiv0\).

The second alternative is impossible.  Indeed, choose
\(x=(x_1,x')\in\Sigma_\tau\) with
\(x_1<0<2\tau-x_1\).  If \(w_\tau\equiv0\), then by \eqref{eq:reflected-pucci-inequality}, together with \(f(v)>0\), give
$$0=\mathcal{M}_{\lambda,\Lambda}^-(D^2w_{\tau}(x))\leq 
    [h(x_1)-h(2\tau-x_1)]f(v(x))<0,$$

which is a contradiction.  Thus \(w_\tau>0\).

Finally, apply the Hopf boundary lemma locally to
\cref{eq:strictness-linearized}.  Since the outer normal to
\(\Sigma_\tau\) is \(e_1\), we obtain
\(\partial_1w_\tau<0\) on \(T_\tau\).
\end{proof}
\subsection{The moving plane cannot stop}

\begin{lemma}[No finite stopping]\label{lem:no-stopping}
The number \(\tau_*\) defined in \cref{eq:admissible-positions} satisfies
\(\tau_*=+\infty\).
\end{lemma}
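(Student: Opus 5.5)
We argue by contradiction: assume \(\tau_*<+\infty\). By \cref{lem:continuation} applied at \(\tau=\tau_*\), we know \(w_{\tau_*}>0\) in \(\Sigma_{\tau_*}\) and \(\partial_1 w_{\tau_*}<0\) on \(T_{\tau_*}\). By definition of \(\tau_*\) there are \(\tau_k\downarrow\tau_*\) and points \(x^k\in\Sigma_{\tau_k}\) with \(w_{\tau_k}(x^k)<0\). The strategy is to normalize with the barrier \(g_{\tau_k}\) of \cref{lem:pucci-log-barrier} and set \(W_k:=w_{\tau_k}/g_{\tau_k}\). Since \(v\) is bounded and \(g_{\tau_k}\to\infty\) at infinity in \(\Sigma_{\tau_k}\), the negative infimum \(m_k:=\inf_{\Sigma_{\tau_k}}W_k<0\) is attained at some \(y^k\in\Sigma_{\tau_k}\).

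At \(y^k\), exactly as in the proof of \cref{lem:start-plane}, \(D^2w_{\tau_k}-m_kD^2g_{\tau_k}\ge0\) together with \cref{eq:barrier-pucci-sign} gives \(\Mminus(D^2w_{\tau_k}(y^k))>0\). Hence the right side of \cref{eq:reflected-pucci-inequality} must be positive at \(y^k\):
\[
h(y^k_1)f(v(y^k))>h(2\tau_k-y^k_1)f(v_{\tau_k}(y^k)).
\]
Since \(w_{\tau_k}(y^k)<0\) and \(f\) is nondecreasing, this forces \(h(y^k_1)>h(2\tau_k-y^k_1)\ge0\) or a comparable sign constraint. Because \(h\) is nondecreasing and \(y^k_1<2\tau_k-y^k_1\), we have \(h(y^k_1)\le h(2\tau_k-y^k_1)\). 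The inequality can therefore hold only if \(h(2\tau_k-y^k_1)<0\), which forces \(y^k_1<0\), or more precisely \(y^k\) lies where both \(h\)-values are negative and then \(f(v)>f(v_{\tau_k})\) compensates. In that regime, where \(2\tau_k-y^k_1<0\), the argument of \cref{lem:start-plane} already shows \(\Mminus(D^2 w)\le0\), a contradiction. Since \(\tau_k>\tau_*\ge0\), the case \(2\tau_k-y^k_1\le0\) reduces to \(y^k_1\le 2\tau_k - 0\), and the first subcase of that proof covers \(y_1\le 0\le 2\tau-y_1\) only when \(h(y_1)\le0< h(2\tau - y_1)\). I expect the conclusion of this step to be that \(y^k_1\) lies in a bounded strip \(0<\tau_k-y^k_1\le C\), or at least that \(y^k_1\) is bounded below. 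I would make this precise via (L1), which gives \(h(2\tau_k-y_1)>h(y_1)\) strictly whenever \(2\tau_k-y_1>0\), combined with (L2) and (L4) to control the ratio when \(v_{\tau_k}\) and \(v\) are both small.

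The main obstacle is that the \(x'\)-coordinates of \(y^k\) may escape to infinity, so continuity alone cannot pass to the limit. Here I would use compactness of tangential translates: set \(\tilde v_k(x):=v(x_1,x'+(y^k)')\). By \cref{lem:compactness} and the \(C^{2,\beta}\) bounds \cref{eq:liouville-regularity}, a subsequence converges in \(C^2_{loc}\) to a bounded solution \(\tilde v\ge0\) of \cref{eq:weighted-entire}. The translated minimum points converge to \((\bar y_1,0)\) with \(\bar y_1\le\tau_*\), where the limit difference \(\tilde w_{\tau_*}\) satisfies \(\tilde w_{\tau_*}\ge0\) with \(\tilde w_{\tau_*}(\bar y)\le 0\). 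Since \(v\) is increasing in the direction guaranteed on \(\Sigma_\tau\) for \(\tau\le\tau_*\), applying \cref{lem:continuation} to \(\tilde v\) shows \(\tilde v\not\equiv0\) is needed. To exclude the vanishing of \(\tilde v\), I would normalize \(\tilde v_k\) by \(v((y^k))\) and use Harnack (\cref{lem:harnack}, with the coefficient \(c\) bounded via (L4)); this is the delicate point, since (L4) and (L2) are exactly what keep the normalized linearized coefficient \(h q\) locally bounded in the limit. Once \(\tilde v>0\), \cref{lem:continuation} applied to \(\tilde v\) at \(\tau_*\) gives \(\tilde w_{\tau_*}>0\) in \(\Sigma_{\tau_*}\) and \(\partial_1\tilde w_{\tau_*}<0\) on \(T_{\tau_*}\).

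The contradiction then splits into two cases. If \(\bar y\in\Sigma_{\tau_*}\), then \(\tilde w_{\tau_*}(\bar y)\le0\) contradicts strict positivity. If \(\bar y\in T_{\tau_*}\), then \(\nabla w_{\tau_k}(y^k)=m_k\nabla g_{\tau_k}(y^k)\), where \(\nabla g\) is bounded and \(m_k\to0\), so \(\partial_1\tilde w_{\tau_*}(\bar y)\ge 0\) in the limit. This contradicts the Hopf conclusion. Hence \(\tau_*=+\infty\).
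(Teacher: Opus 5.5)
There is a genuine gap, and it sits where you flag the ``delicate point'': the case in which the tangential translates $\tilde v_k(x)=v(x_1,x'+(y^k)')$ converge to $\tilde v\equiv0$. This case cannot be excluded, since nothing prevents $v$ from decaying along $|(y^k)'|\to\infty$. Normalizing does not exclude it either. Set $V_k:=\tilde v_k/\tilde v_k(y^k_1,0)$. Then $-\Mplus(D^2V_k)=h(x_1)\frac{f(\tilde v_k)}{\tilde v_k}V_k$, and the coefficient tends to $0$ locally uniformly because $f(s)/s\to f'(0)=0$. Harnack gives $V_k\to V>0$ with $-\Mplus(D^2V)=0$. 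The scale-invariant Harnack inequality and \cref{cor:bounded-homogeneous} then force $V$ to be constant, so its reflected difference is identically zero. The weight $h$, which is the only source of strictness in \cref{lem:continuation}, has disappeared in the limit, so neither strict positivity nor the Hopf inequality survives. Your step ``once $\tilde v>0$'' is therefore never reached, and both of your final cases ($\bar y\in\Sigma_{\tau_*}$ and $\bar y\in T_{\tau_*}$) remain unproved in the vanishing regime. Your use of the barrier-minimum points $y^k$ as the bad points, and your gradient argument via $m_k\to0$, are otherwise fine.

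The missing idea is to use the constant limit quantitatively, on the sequence rather than in the limit. Since $V_k\to\mathrm{const}$ in $C^1_{loc}$, you get $|D\tilde v_k|/\tilde v_k\to0$ uniformly on the fixed segment $\{(t,0):0\le t\le 2\tau_*+2\}$. For $t\in[y^k_1,\tau_k]$ put
$\ell_k(t):=h(t)f(\tilde v_k(t,0))-h(2\tau_k-t)f(\tilde v_k(2\tau_k-t,0))$.
Its derivative $\ell_k'$ is a sum of two positive multiples of brackets of the form $h'/h+\bigl(sf'(s)/f(s)\bigr)\,\partial_1\tilde v_k/\tilde v_k$. (L2) gives $h'/h\ge\sigma_0>0$ on $(0,2\tau_*+2]$, and (L4) bounds $sf'(s)/f(s)$ because $\tilde v_k\to0$ there. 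Hence $\ell_k'>0$ for large $k$, and so $\ell_k(y^k_1)<\ell_k(\tau_k)=0$. On the other hand, \cref{eq:reflected-pucci-inequality} at $y^k$ gives $0<\Mminus(D^2w_{\tau_k}(y^k))\le\ell_k(y^k_1)$, a contradiction.

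This is where (L2) and (L4) genuinely enter. Local boundedness of the linearized coefficient, which you attribute to them, already follows from $f\in C^1$ and $f'(0)=0$. Also, the regime $2\tau_k-y^k_1\le0$ you discuss is empty, because $y^k_1<\tau_k$ and $\tau_k>0$. The sign analysis simply gives $h(2\tau_k-y^k_1)>0$, hence $h(y^k_1)>0$ and $0<y^k_1<\tau_k$, which is exactly the boundedness you need.
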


\begin{proof}
Suppose, to the contrary, that \(\tau_*<+\infty\).  By continuity,
\(w_{\tau_*}\ge0\), and hence Lemma \ref{lem:continuation} gives
\begin{equation}\label{eq:stopping-strictness}
 w_{\tau_*}>0\text{ in }\Sigma_{\tau_*},
 \qquad
 \partial_1w_{\tau_*}<0\text{ on }T_{\tau_*}.
\end{equation}
From the definition of \(\tau_*\), there are
\(\tau_j>\tau_*\), \(\tau_j\to\tau_*\), such that
\begin{equation}\label{eq:post-stopping-negative}
 \inf_{\Sigma_{\tau_j}}w_{\tau_j}<0.
\end{equation}
Write \(w_j:=w_{\tau_j}\).

\medskip
\noindent\emph{Step 1:}
We claim that, for all sufficiently large \(j\),
\begin{equation}\label{eq:claim-bad-points}
 x\in\Sigma_{\tau_j},\quad w_j(x)\le0
 \quad\Longrightarrow\quad
 \mathcal{M}_{\lambda,\Lambda}^{-}(D^2w_j(x))<0.
\end{equation}
If this were false, after passing to a subsequence we could choose
\(x^j=(s_j,z_j)\in\R\times\R^{N-1}\) such that
\begin{equation}\label{eq:bad-point-sequence}
 w_j(x^j)\le0,\qquad \mathcal{M}_{\lambda,\Lambda}^{-}(D^2w_j(x^j))\ge0.
\end{equation}
Since \(2\tau_j-s_j>\tau_j>0\),
\cref{eq:reflected-pucci-inequality,eq:bad-point-sequence} imply
\(h(s_j)>0\).  Hence
\begin{equation}\label{eq:sj-bounded}
 0<s_j<\tau_j.
\end{equation}

If \((z_j)\) were bounded, a subsequence would satisfy
\(x^j\to x^0\in\overline{\Sigma_{\tau_*}}\).  If
\(x_1^0<\tau_*\), passing to the limit in
\(w_j(x^j)\le0\) would contradict the first part of
\cref{eq:stopping-strictness}.  If \(x_1^0=\tau_*\), the mean-value
theorem applied between \(s_j\) and \(\tau_j\) would give points
\(t_j\in(s_j,\tau_j)\) such that
\(\partial_1w_j(t_j,z_j)\ge0\); passing to the limit would contradict the
second part of \cref{eq:stopping-strictness}.  Therefore,
\begin{equation}\label{eq:tangential-escape}
 |z_j|\longrightarrow+\infty.
\end{equation}

Define the tangential translates
\[
 \bar v_j(x_1,x'):=v(x_1,x'+z_j).
\]
 Boundedness and the local
estimates yield, along a subsequence,
\begin{equation}\label{eq:tangential-limit}
 \bar v_j\longrightarrow\bar v
 \quad\text{in }C^2_{\mathrm{loc}}(\R^N),
\end{equation}

where \(\bar v\ge0\) solves \cref{eq:weighted-entire}. 
By the strong maximum principle, we have either $\bar{v}(x)>0$ in $R^{N}$, or $\bar{v} \equiv 0$. By Lemma \ref{lem:continuation}, we have $\bar{v}_{j}^{\tau_{*}}(x) \geq \bar{v}_{j}(x)$ in $\Sigma_{\tau_{*}}$ which implies
\[\bar{w}_{\tau_{*}}(x):=\bar{v}^{\tau_{*}}(x)-\bar{v}(x) \geq 0, ~~\forall x \in \Sigma_{\tau_{*}}.\]
Now again by repeating the argument as in the proof of Lemma \ref{lem:continuation} can be applied to $\bar{v}$ to conclude that unless $\bar{v} \equiv 0$, we have
\begin{equation}\label{eq4}
\bar{w}_{\tau_{*}}(x)>0 ~~\forall x\in\Sigma_{\tau _{*}},~~\frac{\partial \bar{w}_{\tau_{*}}}{\partial x_{1}}(x)<0 ~~\forall x \in T_{\tau_{*}}. 
\end{equation}
Our next aim is to show that \eqref{eq4} cannot happen and hence $\bar{v} \equiv 0$. In fact, if $\lim _{j \rightarrow+\infty}s_{j}=x_{1}^{0}<\tau_{*}$, then
\[\bar{w}_{\tau_{*}}\left(x_{1}^{0}, 0, \ldots, 0\right)=\lim _{j \rightarrow+\infty} w_{j}\left(x^{j}\right)\leq 0\]
which violates the first part of \eqref{eq4}. If $\lim _{j \rightarrow+\infty} s_{j}=x_{1}^{0}=\tau_{*}$, then due to $s_{j}<\tau_{j}$ and $w_{j}\left(s_{j}, z_{j}\right) \leq 0=w_{j}\left(\tau_{j}, z_{j}\right)$, we can find $t_{j} \in\left(s_{j}, \tau_{j}\right)$ such that $\frac{\partial w_{j}}{\partial x_{1}} \left(t_{j},z_{j}\right) \geq 0.$ It follows that
\[\frac{\partial \bar{w}_{\tau_{*}}}{\partial x_{1}}\left(\tau_{*}, 0\right)\geq 0\]

a contradiction to the second part of \eqref{eq4}.
Consequently,
\begin{equation}\label{eq:tangential-zero-limit}
 \bar v\equiv0,\qquad
 \bar v_j\longrightarrow0\quad\text{in }C^2_{\mathrm{loc}}(\R^N).
\end{equation}

Set \(a_j:=\bar v_j(0)>0\) and \(V_j:=\bar v_j/a_j\).  Then
\begin{equation}\label{eq:normalized-translate-equation}
 -\Mplus(D^2V_j)=c_j(x)V_j,
 \qquad
 c_j(x):=h(x_1)\frac{f(\bar v_j(x))}{\bar v_j(x)}.
\end{equation}
On every compact set, \(c_j\to0\) uniformly.  Harnack inequalities along
a finite chain of balls give local bounds for \(V_j\), because
\(V_j(0)=1\).  Interior compactness therefore gives
\[
 V_j\longrightarrow V\quad\text{in }C^1_{\mathrm{loc}}(\R^N),
 \qquad
 -\Mplus(D^2V)=0,\quad V\ge0,\quad V(0)=1.
\]
For every $R>0,$ the homogeneous Harnack inequality gives 
\[
\sup_{B_R} V\leq C\inf_{B_R}V\leq CV(0)=C,
\]
with $C$ independent of $R.$ Hence $V$ is globally bounded. By Corollary \ref{cor:bounded-homogeneous}, \(V\equiv1\).  It follows that
\begin{equation}\label{eq:log-gradient-vanishes}
 \frac{|D\bar v_j|}{\bar v_j}
 =\frac{|DV_j|}{V_j}\longrightarrow0
 \quad\text{locally uniformly in }\R^N.
\end{equation}
Note also that as $\left\{s_{j}\right\}$ is bounded, the balls $B_{j}:=\left\{x:\left|x-\left(s_{j}, 0\right)\right|<1+2 \tau_{*}\right\},$ for $j=1,2,\ldots$ are contained in some fixed compact subset of $\mathbb{R}^{N}.$ Therefore,
\begin{equation}\label{eq5}
\sup _{x \in B_{j}} \frac{\left|D \bar{v}_{j}(x)\right|}{\bar{v}_{j}(x)} \rightarrow 0 \text { as } j \rightarrow+\infty 
\end{equation}
For $x=(t, 0) \in \mathbb{R} \times \mathbb{R}^{N-1}$ with \(t\in[s_j,\tau_j]\), put \(t^{\tau_j}:=2\tau_j-t\) and define
\begin{equation}\label{eq:ell-j-definition}
 \ell_j(t):=
 h(t)f\bigl(\bar v_j(x)\bigr)
 -h(t^{\tau_j})f\bigl(\bar v_j(x^{\tau_j})\bigr).
\end{equation}
Both \(t\) and \(t^{\tau_j}\) remain in a fixed compact interval of
\((0,+\infty)\), except that \(t\) may approach zero.  Assumption~(L2)
and positivity of \(h'\) on \((0,+\infty)\) imply that, for some
\(\sigma_0>0\),
\begin{equation}\label{eq:h-log-derivative-lower}
 \frac{h'(r)}{h(r)}\ge\sigma_0
 \quad\text{for }0<r\le2\tau_*+2.
\end{equation}
Likewise, (L3)--(L4) imply that, for some \(C_f>0\),
\begin{equation}\label{eq:f-log-derivative-upper}
 0\le\frac{s f'(s)}{f(s)}\le C_f
 \quad\text{for all sufficiently small }s>0.
\end{equation}
A direct differentiation of \cref{eq:ell-j-definition} gives
\begin{equation}\label{eq:ell-j-derivative}
    \left\{
\begin{aligned}
l_{j}^{\prime}(t)= & h(t) f\left(\bar{v}_{j}(x)\right)\left(\frac{h^{\prime}(t)}{h(t)}+\frac{f^{\prime}\left(\bar{v}_{j}(x)\right) \bar{v}_{j}(x)}{f\left(\bar{v}_{j}(x)\right)} \frac{\frac{\partial}{\partial x_{1}} \bar{v}_{j}(x)}{\bar{v}_{j}(x)}\right) \\
&+h\left(t^{\tau_{j}}\right) f\left(\bar{v}_{j}\left(x^{\tau_{j}}\right)\right)\left(\frac{h^{\prime}\left(t^{\tau_{j}}\right)}{h\left(t^{\tau_{j}}\right)}+\frac{f^{\prime}\left(\bar{v}_{j}\left(x^{\tau_{j}}\right)\right) \bar{v}_{j}\left(x^{\tau_{j}}\right)}{f\left(\bar{v}_{j}\left(x^{\tau_{j}}\right)\right)} \frac{\frac{\partial \bar{v}_{j}}{\partial x_{1}}\left(x^{\tau_{j}}\right)}{\bar{v}_{j}\left(x^{\tau_{j}}\right)}\right).
\end{aligned}
\right.
\end{equation}
  By
\cref{eq:tangential-zero-limit,eq:log-gradient-vanishes,%
eq:h-log-derivative-lower,eq:f-log-derivative-upper}, both brackets in
\cref{eq:ell-j-derivative} are positive for all large \(j\), uniformly for
\(t\in[s_j,\tau_j]\).  Hence
\[
 \ell_j(s_j)<\ell_j(\tau_j)=0.
\]
But \cref{eq:reflected-pucci-inequality} evaluated at \(x^j\) yields
\[
 0\le\mathcal{M}_{\lambda,\Lambda}^{-}(D^2w_j(x^j))\le\ell_j(s_j)<0,
\]
contradicting \cref{eq:bad-point-sequence}.  This proves
\cref{eq:claim-bad-points}.

\medskip
\noindent\emph{Step 2: contradiction with the logarithmic barrier.}
For the sequence in \cref{eq:post-stopping-negative}, define
\(W_j:=w_j/g_{\tau_j}\).  As in the proof of
Lemma \ref{lem:start-plane}, \(W_j\) attains a negative minimum
\(m_j\) at some \(y^j\in\Sigma_{\tau_j}\).  At this point
\(w_j(y^j)<0\), so \cref{eq:claim-bad-points} applies.  Therefore,
\begin{align*}
 0
 &\le\Mminus(D^2w_j(y^j)-m_jD^2g_{\tau_j}(y^j))\\
 &\le\Mminus(D^2w_j(y^j))
   +\Mplus(-m_jD^2g_{\tau_j}(y^j))<0,
\end{align*}
where we used \(-m_j>0\) and \cref{eq:barrier-pucci-sign}.  This final
contradiction proves \(\tau_*=+\infty\).
\end{proof}

\subsection{Completion of the proof}

\begin{proof}[Proof of \cref{thm:weighted-liouville}]
If \(v\not\equiv0\), then \cref{eq:zero-or-positive} gives \(v>0\).
By Lemma \ref{lem:no-stopping}, every \(\tau\in\R\) is admissible, and
Lemma \ref{lem:continuation} yields
\[
 v(2\tau-x_1,x')>v(x_1,x')
 \qquad(x_1<\tau).
\]
Given \(s<t\), choose \(\tau=(s+t)/2\).  Then
\[
 v(t,x')>v(s,x')
 \qquad\text{for every }x'\in\R^{N-1},
\]
which proves strict monotonicity in the \(x_1\)-direction.

Assume now that \(h(t)\to+\infty\) as \(t\to+\infty\).  For
\(\sigma>0\), let
$$\Omega_\sigma=B_{1/2}((\sigma+1/2)e_1).$$
where $e_1=(1,0,\dots,0)\in\mathbb{R}^N.$
Then the first principle eigenvalue $\mu_1^+$ of the problem
\begin{equation*}\label{eq6}
\left\{
\begin{aligned}
-\mathcal{M}_{\lambda,\Lambda}^+(D^2v)&=\mu v~~\text{in}~~\Omega_\sigma,\\
v&=0~~\text{on}~~\partial\Omega_\sigma
\end{aligned}
\right.
\end{equation*}
is independent of $\sigma.$ If $v$ is a positive solution of \cref{eq:weighted-entire}, then $v$ is strictly increasing in $x_1,$ consequently we have 
\begin{equation}\label{sigma1}
     -\mathcal{M}_{\lambda,\Lambda}^+(D^2v)=h(x_1)f(v)\geq h(\sigma)m_0v,~~\forall x\in\Omega_\sigma
\end{equation}

where 
$$m_0=\frac{f(\xi_0)}{\text{sup}_{\mathbb{R}^N}v}>0,~~\xi_0=\displaystyle{\min_{x_1=0,|x|\leq1}}v(x)>0.$$
Since $v>0$ in $\mathbb{R}^N,$ by \eqref{sigma1} $\mu_1^+\geq h(\sigma)m_0.$ It follows that 
\begin{align*}
\overline{\lim\limits}_{\sigma\to+\infty}h(\sigma)\leq\frac{\mu_1^+}{m_0}<+\infty,
\end{align*}
which is a contradiction to our assumption that $h(t)\to+\infty$ as $t\to+\infty.$ Therefore, \cref{eq:weighted-entire} has only the trivial solution $v\equiv0$.This completes the proof.\\

\end{proof}
\section{A Priori Estimates and Blow-up Analysis}
\label{sec:reduction}
\subsection{Spectral preliminaries and localization of maximum points}
\begin{remark}[Eigenvalue and maximum principle]\label{rem:2.2}
(1) We know that if there exists a positive supersolution to
\[
-\mathcal{M}_{\lambda,\Lambda}^{+}(D^2u)-\mu u
\]
then maximum principle holds, see Theorem 3.3 \cite{armstrong2009principal}. In particular,
if $\mu_1^+(G)$ is positive then for any $\mu<\mu_1^+(G)$ corresponding
eigenfunction $\vartheta_1^+$ serve as the existence of positive
eigenfunction for the considered operator. Thus, in this case, even if
the operator
\[
-\mathcal{M}_{\lambda,\Lambda}^{+}(D^2u)-\mu u
\]
is not proper but the maximum principle holds. Moreover the strong
maximum principle also holds for the same operator. Indeed, for
every $x_0\in G$ and every sufficiently small $r>0$,
$B_r(x_0)\Subset G$, and the scaling property of the first
half-eigenvalue gives
\[
\mu_1^+(B_r(x_0))
=
r^{-2}\mu_1^+(B_1)
\longrightarrow +\infty
\quad\text{as }r\to0.
\]
Hence, for every fixed $\mu\in\mathbb{R}$, one can choose $r>0$ so
small that
\[
\mu<\mu_1^+(B_r(x_0)).
\]
The maximum principle is, therefore, valid in $B_r(x_0)$, irrespective
of the value of $\mu$. Consequently, if $u\geq0$ is a supersolution of
\[
-\mathcal{M}_{\lambda,\Lambda}^{+}(D^2u)-\mu u\geq0
\quad\text{in }G
\]
and $u(x_0)=0$ at some interior point $x_0\in G$, the local maximum
principle implies that $u\equiv0$ in a neighborhood of $x_0$. The
usual propagation argument, together with the connectedness of $G$,
yields $u\equiv0$ in $G$.

(2) From the characterization of principle eigenvalue, we observe that if
$G\subset\mathbb{R}^N$ be a bounded regular domain and
\[
-\mathcal{M}_{\lambda,\Lambda}^{+}(D^2w)
\geq \;(\text{resp. }>)\; \mu w
\quad\text{in }G,
\]
for some positive continuous function $w$, then we have
\[
\mu\leq\mu_1^+
\quad
(\text{resp. }\mu<\mu_1^+).
\]
\end{remark}

\begin{lemma}[Positive solution with positive boundary values]
\label{lem:2.3}
Let $G\subset\mathbb{R}^N$ be a bounded regular connected domain, let $w\in C(\overline{G})$
$w>0$ in $\overline{G}$ and
\[
-\mathcal{M}_{\lambda,\Lambda}^{+}(D^2w)=\mu w
\quad\text{in }G.
\]
Then
\[
\mu<\mu_1^+(G).
\]
\end{lemma}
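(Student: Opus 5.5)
We will prove $\mu<\mu_1^+(G)$ by enlarging $G$ slightly and invoking the strict monotonicity of $\mu_1^+$ with respect to the domain. The hypothesis $w>0$ on $\overline G$ is what makes this possible: $w$ has room to be extended past $\partial G$.

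\textbf{Step 1 (weak inequality).} By \cref{prop:first_halfeigenvalue}(iv), or equivalently Remark~\ref{rem:2.2}(2), $w$ is a positive function in $G$ satisfying $\Mplus(D^2w)+\mu w=0\le 0$. Hence $\mu\le\mu_1^+(G)$.

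\textbf{Step 2 (excluding equality).} Suppose $\mu=\mu_1^+(G)$ and let $\phi_1^+$ be the principal half-eigenfunction, which vanishes on $\partial G$. We compare $w$ and $\phi_1^+$ in the spirit of the simplicity statement in \cref{prop:first_halfeigenvalue}(i). Since $\min_{\overline G}w>0$ and $\phi_1^+$ is bounded, the quantity
\[
t^*:=\sup\{t>0:\ t\phi_1^+\le w\ \text{in }G\}
\]
is positive and finite. Set $z:=w-t^*\phi_1^+\ge0$. Both $w$ and $t^*\phi_1^+$ solve the same equation (using positive homogeneity), so \cref{lem:viscosity-difference} gives
\[
\Mminus(D^2z)+\mu z\le 0 \quad\text{in }G.
\]
By the strong maximum principle (\cref{lem:smp-hopf} with $c_0=\mu$), either $z\equiv0$ or $z>0$ in $G$.
\begin{itemize}
\item[(a)] $z\equiv0$ is impossible, since $z=w>0$ on $\partial G$ while $\phi_1^+=0$ there.
\item[(b)] If $z>0$ in $G$, then $z\ge\min_{\partial G}w>0$ near $\partial G$, so $z$ is bounded below by a positive constant on a boundary strip. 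On compact subsets, $z>0$ is bounded below by continuity. Since $\phi_1^+$ is bounded, this yields $z\ge\varepsilon\phi_1^+$ for some $\varepsilon>0$, contradicting the maximality of $t^*$.
\end{itemize}
Hence $\mu\neq\mu_1^+(G)$, and therefore $\mu<\mu_1^+(G)$.

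\textbf{Alternative route (domain extension).} One can also extend $w$ positively to a slightly larger regular domain $G_\delta\supset\overline G$. Then $\mu\le\mu_1^+(G_\delta)<\mu_1^+(G)$ by strict domain monotonicity. The difficulty is that the extension need not satisfy the equation, so the comparison argument above is the one we will use.

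\textbf{Main obstacle.} The delicate point is Step 2(b). It requires controlling $z$ near $\partial G$ relative to $\phi_1^+$ in order to contradict the maximality of $t^*$. This uses the continuity of $w$ up to $\partial G$ together with the boundedness of $\phi_1^+$, which is where the hypothesis $w\in C(\overline G)$, $w>0$ on $\overline G$ enters.
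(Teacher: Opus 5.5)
Your proof is correct and follows the paper's argument. Like the paper, you slide $t\phi_1^+$ under $w$, use $w>0$ on $\partial G$ to rule out strict separation on $\overline G$, and invoke the strong maximum principle to force $w\equiv t^*\phi_1^+$, which contradicts the boundary values. The differences are cosmetic: the paper treats all $\mu\ge\mu_1^+(G)$ at once, since $w$ is then a supersolution of the $\mu_1^+$-equation, so your separate Step~1 via \cref{prop:first_halfeigenvalue}(iv) is unnecessary. In Step~2(b), the cleanest justification is that $z\in C(\overline G)$ is positive on the compact set $\overline G$, so $\min_{\overline G}z>0$.
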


\begin{proof}
We prove this Lemma by method of contradiction and assume
\[
\mu\geq\mu_1^+(G).
\]
In order to get contradiction we use sliding argument. Let us define
\[
t^*
:=
\sup\left\{
t>0:t\phi_1^+\leq w\text{ in }G
\right\}.
\]
As $w$ and $\phi_1^+$ both are continuous function in $G$,
therefore $t^*$ can not be infinite. Moreover, $t^*>0$. Indeed, as
$w>0$ in $G$ and $\phi_1^+=0$ on $\partial G$, so
\[
\frac{w(x)}{\phi_1^+(x)}
\longrightarrow+\infty
\quad\text{as }x\to\partial G.
\]
Thus there exists some point $x^*\in G$ such that
\[
w(x^*)=t^*\phi_1^+(x^*)
\]
and
\[
w(x)\geq t^*\phi_1^+(x)
\quad\text{for all }x\in G.
\]
Notice that $\mu\geq\mu_1^+$ is not possible as, in this case, $w$ is
supersolution and $t^*\phi_1^+$ is solution of
\[
-\mathcal{M}_{\lambda,\Lambda}^{+}(D^2v)
-\mu_1^+(G)v=0
\quad\text{in }G
\]
and
\[
w\geq t^*\phi_1^+
\]
with equality at $x^*$. So strong maximum principle gives
\[
w=t^*\phi_1^+
\]
which is not possible as $w>0$ on $\partial G$ where as
$\phi_1^+=0$ on the boundary. Thus
\[
\mu<\mu_1^+.
\]
\end{proof}

\begin{proof}[Proof of Proposition \ref{prop:necessary-spectral-restriction}]

We prove separately the two inequalities.

\medskip
\noindent
\textbf{Step 1: The region $G^+$.}

By construction,
\[
a(x)\geq0
\quad\text{for }x\in G^+.
\]
Since $u>0$ in $\Omega$, \cref{eq:problem1} gives
\[
-\mathcal{M}_{\lambda,\Lambda}^{+}(D^2u)
=
\mu u+a(x)u^p
\geq\mu u
\quad\text{in }G^+.
\]
Moreover, on the nonempty open set $\Omega^+$ we have $a(x)>0$.
Hence
\[
-\mathcal{M}_{\lambda,\Lambda}^{+}(D^2u)
>
\mu u
\quad\text{in }\Omega^+.
\]
Thus $u$ is a positive supersolution of
\[
-\mathcal{M}_{\lambda,\Lambda}^{+}(D^2v)
=
\mu v
\]
in $G^+$, and the inequality is strict on a nonempty open subset.
By Lemma \ref{lem:2.3},
\begin{equation}\label{mugplus}
\mu<\mu_1^+(G^+).
\end{equation}

\medskip
\noindent
\textbf{Step 2: The region $G^-=Z_-$.}

Notice that $G^-$ is contained in the zero set $\Omega^0$.
Consequently,
\[
a(x)=0
\quad\text{for all }x\in G^-.
\]
Therefore, \cref{eq:problem1} reduces in $G^-$ to
\begin{equation}\label{eq:3.2}
-\mathcal{M}_{\lambda,\Lambda}^{+}(D^2u)
=
\mu u
\quad\text{in }G^-.
\end{equation}
Furthermore, since $u>0$ in $\Omega$ and
$u\in C(\overline{\Omega})$, we have
\[
u>0\quad\text{on }\overline{G^-}.
\]
In particular,
\[
u>0\quad\text{on }\partial Z_-.
\]
Applying \cref{lem:2.3} to \cref{eq:3.2} with $G=Z^o_-$ yields
\begin{equation}\label{eq:3.3}
\mu<\mu_1^+(Z^o_-).
\end{equation}
Combining \cref{eq:3.3} and \cref{mugplus}, we obtain
\[
\mu
<
\min\{\mu_1^+(G^+),\mu_1^+(G^-)\}
=
\sigma_*.
\]
This proves the lemma.
\end{proof}
\begin{lemma}[Positive barriers near neutral components]
\label{lem:positive-barrier}
Let
\[
K_1,\ldots,K_m\Subset \Omega
\]
be the mutually disjoint thick neutral components, with connected
$C^2$ interiors. Set $B=\sup\Ipar<\sigma_*,$ where $\Ipar$ is a bounded interval in $\mathbb{R}.$ Assume that
\[
B<\min_{1\le j\le m}\mu_1^+(K_j^\circ).
\]
Then there exists a function
\[
\psi\in C^2(\Omega), \qquad \min_{\Omega}\psi>0,
\]
such that
\[
-\mathcal{M}_{\lambda,\Lambda}^{+}(D^2\psi)>B\psi
\]
in a neighborhood of each $K_j$.
\end{lemma}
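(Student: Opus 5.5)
The plan is to build $\psi$ locally around each $K_j$ from a principal half-eigenfunction of a slightly enlarged domain, and then glue these local pieces to a positive constant away from the $K_j$. Only a strict supersolution property near each $K_j$ is required, so no global equation has to hold.

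\emph{Step 1: enlarged domains.} Fix $j$. The principal half-eigenvalue depends continuously on the domain under small $C^2$ perturbations; by the characterization in Proposition~\ref{prop:first_halfeigenvalue}(ii), $\mu_1^+(K_j^\circ)=\inf_A\mu_1(A)$, and for each fixed $A$ the eigenvalue is continuous in the domain. Hence for $\delta>0$ small, the $C^2$ domain
\[
U_j^\delta:=\{x:\operatorname{dist}(x,K_j)<\delta\}
\]
satisfies $U_j^{\delta}\Subset\Omega$, the closures $\overline{U_j^{\delta}}$ are mutually disjoint, and $\mu_1^+(U_j^\delta)>B$. This works because $\partial K_j^\circ$ is $C^2$, so the tubular neighbourhood is $C^2$ for small $\delta$. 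Choosing $\delta$ smaller, we may also take $U_j^{2\delta}$ with $\mu_1^+(U_j^{2\delta})>B$. Let $\phi_j>0$ be the positive principal half-eigenfunction on $U_j^{2\delta}$. Then
\[
-\Mplus(D^2\phi_j)=\mu_1^+(U_j^{2\delta})\phi_j>B\phi_j
\]
on $U_j^{2\delta}$, and $\phi_j$ is bounded below by a positive constant on $\overline{U_j^{\delta}}$. Interior regularity gives $\phi_j\in C^{2,\beta}$ there. If $B<0$ one could even take $\psi\equiv1$, but the eigenfunction handles all cases uniformly.

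\emph{Step 2: gluing.} Normalize $\phi_j$ so that $\min_{\overline{U_j^{\delta}}}\phi_j\ge1$. Take a cutoff $\chi_j\in C^\infty_c(U_j^{2\delta})$ with $0\le\chi_j\le1$ and $\chi_j\equiv1$ on $U_j^{\delta}$. Define
\[
\psi:=\sum_j\chi_j\phi_j+\Bigl(1-\sum_j\chi_j\Bigr).
\]
Then $\psi\in C^2(\Omega)$. On $U_j^\delta$ we have $\psi=\phi_j$, so the strict inequality $-\Mplus(D^2\psi)>B\psi$ holds on the neighbourhood $U_j^\delta$ of $K_j$.

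\emph{Step 3: positivity.} Away from the $U_j^{\delta}$, $\psi$ is a convex combination of the positive values $\phi_j$ and $1$, so $\psi>0$. Because $\phi_j$ is bounded below on the compact support of $\chi_j$, we get $\inf_\Omega\psi\ge\min\bigl(1,\min_j\min_{\operatorname{supp}\chi_j}\phi_j\bigr)>0$. Near $\partial\Omega$, $\psi\equiv1$. Thus $\min\psi>0$, read as an infimum, which is attained up to $\overline\Omega$.

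The main obstacle is Step 1, the strict monotonicity and continuity of $\mu_1^+$ under domain enlargement for the Pucci operator. I would prove it directly. Take $\phi$, the half-eigenfunction of $K_j^\circ$, and extend it to a slightly larger domain $U$ as follows. Let $\tilde\phi$ be the half-eigenfunction of a fixed larger smooth domain $V\supset K_j$, and consider the sets $tV'$ obtained by mild dilation about interior points, or use the scaling law $\mu_1^+(x_0+s D)=s^{-2}\mu_1^+(D)$ when $K_j^\circ$ is star-shaped. In general, I would instead argue by contradiction. If $\mu_1^+(U_j^{\delta_k})\le B$ along $\delta_k\to0$, normalize the eigenfunctions and pass to the limit using Proposition~\ref{lem:compactness} with uniform boundary estimates. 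This yields a positive eigenfunction on $K_j^\circ$ with eigenvalue $\le B<\mu_1^+(K_j^\circ)$, which contradicts the simplicity and characterization in Proposition~\ref{prop:first_halfeigenvalue}(i).
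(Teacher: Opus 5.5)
Your proposal matches the paper's proof: enlarge each $K_j$ to a smooth domain $\Subset\Omega$ whose principal half-eigenvalue still exceeds $B$, take its positive half-eigenfunction, and glue it to a positive constant with disjointly supported cutoffs so that $\psi=\phi_j$ near $K_j$. The paper simply asserts $\mu_1^+(U_j^{\delta})\to\mu_1^+(K_j^\circ)$, and your compactness-by-contradiction argument is the right way to justify it; the $\inf_A$ remark only gives $\limsup_{\delta\to0}\mu_1^+(U_j^{\delta})\le\mu_1^+(K_j^\circ)$, which is the direction monotonicity already supplies, and you should also take $\delta$ small enough that the sets $U_j^{2\delta}$, not just $\overline{U_j^{\delta}}$, are pairwise disjoint, since otherwise $\sum_j\chi_j\le 1$ can fail.
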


\begin{proof}
Since
\[
B<\mu_1^+(K_j^\circ)
\qquad\text{for every }j=1,\ldots,m,
\]
by the continuity of the principal half-eigenvalue with respect to
smooth exterior domain approximation, we may choose mutually disjoint
smooth domains $D_j$ satisfying
\[
K_j\Subset D_j\Subset\Omega
\]
and
\[
\nu_j:=\mu_1^+(D_j)>B.
\]
Indeed, if $D_j^{(n)}$ is a decreasing sequence of smooth exterior
neighborhoods of $K_j$ such that
\[
D_j^{(n)}\downarrow K_j^\circ,
\]
then
\[
\mu_1^+(D_j^{(n)})
   \longrightarrow
\mu_1^+(K_j^\circ).
\]
Hence, for $n$ sufficiently large,
\[
B<\mu_1^+(D_j^{(n)}).
\]

Let $\phi_j>0$ be the positive principal eigenfunction corresponding
to $\nu_j$, normalized, for instance, by
\[
\|\phi_j\|_{L^\infty(D_j)}=1.
\]
Thus
\[
\begin{cases}
-\mathcal{M}_{\lambda,\Lambda}^{+}(D^2\phi_j)
   =\nu_j\phi_j
   & \text{in }D_j,\\[1mm]
\phi_j>0
   & \text{in }D_j,\\[1mm]
\phi_j=0
   & \text{on }\partial D_j.
\end{cases}
\]

Since $K_j\Subset D_j$, set
\[
\delta_j:=\operatorname{dist}(K_j,\partial D_j)>0.
\]
Choose $\varepsilon_j>0$ sufficiently small so that
\[
0<2\varepsilon_j<\delta_j.
\]
We may then choose
\[
\chi_j\in C_c^\infty(D_j),
\qquad
0\le \chi_j\le 1,
\]
such that
\[
\chi_j\equiv1
\quad\text{on }
N_{\varepsilon_j}(K_j)
:=
\bigl\{
x\in\Omega:
\operatorname{dist}(x,K_j)<\varepsilon_j
\bigr\},
\]
and
\[
\chi_j\equiv0
\quad\text{outside }
N_{2\varepsilon_j}(K_j).
\]
The supports of the $\chi_j$ can be chosen mutually disjoint because
the sets $K_j$ are mutually disjoint and compactly contained in
$\Omega$.

Fix any constant $c>0$ and define
\[
\psi(x)
:=
\sum_{j=1}^m\chi_j(x)\phi_j(x)
+
c\left(
1-\sum_{j=1}^m\chi_j(x)
\right).
\]
Equivalently,
\[
\psi(x)
=
c+
\sum_{j=1}^m
\chi_j(x)\bigl(\phi_j(x)-c\bigr).
\]
Since the supports of the functions $\chi_j$ are mutually disjoint,
at every point at most one cutoff is nonzero. Hence $\psi$ is a
$C^2$ function on $\Omega$.

Moreover, $\psi$ is strictly positive. Indeed, on the support of
$\chi_j$ we have
\[
\psi
=
\chi_j\phi_j+(1-\chi_j)c,
\]
which is a convex combination of the two positive quantities
$\phi_j$ and $c$. Outside all the cutoff supports,
\[
\psi\equiv c.
\]
Since
\[
\overline{N_{2\varepsilon_j}(K_j)}
\Subset D_j
\]
and $\phi_j>0$ in $D_j$, we have
\[
\min_{\overline{N_{2\varepsilon_j}(K_j)}}\phi_j>0.
\]
Consequently,
\[
\min_{\Omega}\psi>0.
\]

Finally, in the neighborhood
$N_{\varepsilon_j}(K_j)$ we have
\[
\chi_j\equiv1,
\qquad
\chi_i\equiv0
\quad(i\neq j).
\]
Therefore
\[
\psi=\phi_j
\qquad\text{in }N_{\varepsilon_j}(K_j).
\]
In particular,
\[
D^2\psi=D^2\phi_j
\qquad\text{in }N_{\varepsilon_j}(K_j),
\]
and hence
\[
-\mathcal{M}_{\lambda,\Lambda}^{+}(D^2\psi)
=
-\mathcal{M}_{\lambda,\Lambda}^{+}(D^2\phi_j)
=
\nu_j\phi_j.
\]
Since $\nu_j>B$ and $\psi=\phi_j$ in this neighborhood,
\[
-\mathcal{M}_{\lambda,\Lambda}^{+}(D^2\psi)
=
\nu_j\psi
>
B\psi.
\]
Thus the desired strict inequality holds in a neighborhood of every
$K_j$.
\end{proof}
\begin{lemma}\label{lem:2.5}[Ratio localization and preserved maximum]
Suppose that $\mu_n \leq B$ and that $\psi$ is the barrier given by Lemma \ref{lem:positive-barrier} For solutions of
\[
-\mathcal{M}_{\lambda,\Lambda}^{+}(D^2u)
=
\mu u+a(x)u^p
\quad \text{in } \Omega,
\]
choose $x_n$ maximizing $u_n/\psi$ and put
\[
M_n=u_n(x_n).
\]
Then $x_n$ lies outside every neutral interior and
\begin{equation}\label{max:inequality}
   \frac{\min_{\overline{\Omega}}\psi}
     {\max_{\overline{\Omega}}\psi}
\|u_n\|_{\infty}
\leq
M_n
\leq
\|u_n\|_{\infty}. 
\end{equation}

For every scale $r_n\to 0$, the rescaled functions
\[
v_n(y)
=
\frac{u_n(x_n+r_n y)}{M_n}
\]
have a fixed global bound, and every locally uniform limit on a domain
containing the  origin satisfies
\[
0\leq v\leq 1,
\qquad
v(0)=1.
\]
\end{lemma}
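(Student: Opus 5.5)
The plan has three parts: locate the maximum point $x_n$, prove the two-sided bound \eqref{max:inequality}, and then check the properties of the rescalings.

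\textbf{Location of $x_n$.} Since $u_n=0$ on $\partial\Omega$ and $\min\psi>0$, the ratio $u_n/\psi$ is continuous on $\overline\Omega$ and vanishes on the boundary. Its maximum is therefore attained at an interior point $x_n$, and there $u_n/\psi(x_n)=s_n>0$. Suppose, for contradiction, that $x_n$ lies in the interior $K_j^\circ$ of some neutral component. There $a\equiv0$, so $u_n$ solves $-\Mplus(D^2u_n)=\mu_n u_n$ near $x_n$. By Lemma \ref{lem:positive-barrier}, on this neighbourhood $\psi$ satisfies $-\Mplus(D^2\psi)>B\psi\ge\mu_n\psi$; here I would use $\mu_n\le B$ and $\psi>0$.

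Set $z:=s_n\psi-u_n\ge0$; it vanishes at the interior point $x_n$. Lemma \ref{lem:viscosity-difference}, applied with the roles arranged so that $s_n\psi$ is the strict supersolution, gives
\[
-\Mminus(D^2 z)\ \ge\ -\Mplus(D^2(s_n\psi))+\Mplus(D^2u_n)\ >\ \mu_n(s_n\psi-u_n)=\mu_n z .
\]
In fact $u_n$ is $C^2$ there by the regularity discussion, so the inequality can be checked pointwise. At $x_n$, $z$ has a minimum equal to $0$, so $D^2z(x_n)\ge0$ and hence $\Mminus(D^2z(x_n))\ge0$. The displayed inequality, however, gives $\Mminus(D^2z(x_n))<-\mu_n z(x_n)=0$. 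This is a contradiction, so $x_n$ cannot lie in any $K_j^\circ$. The strictness supplied by Lemma \ref{lem:positive-barrier} is exactly what makes this work, and I would take care to use it only inside $N_{\varepsilon_j}(K_j)$.

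\textbf{The bound \eqref{max:inequality}.} The upper bound $M_n\le\|u_n\|_\infty$ is trivial. For the lower bound, let $y_n$ be a point where $u_n$ attains its maximum. Then
\[
\frac{\|u_n\|_\infty}{\max\psi}\le\frac{u_n(y_n)}{\psi(y_n)}\le\frac{u_n(x_n)}{\psi(x_n)}\le\frac{M_n}{\min\psi},
\]
which is the claim. One caveat: $\psi$ is defined only on $\Omega$. I would take $\min$ and $\max$ over the closure, which is finite because $\psi$ equals the constant $c$ near $\partial\Omega$.

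\textbf{Rescalings.} For any scale $r_n\to0$, we have $v_n\le\|u_n\|_\infty/M_n\le \max\psi/\min\psi$, which is a fixed global bound, and $v_n(0)=1$. Locally uniform convergence preserves both $v(0)=1$ and $v\ge0$.

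The remaining point is $v\le1$, and this is the main obstacle. Maximizing $u_n/\psi$ gives only
\[
v_n(y)\le\frac{\psi(x_n+r_ny)}{\psi(x_n)},
\]
and by continuity of $\psi$ this ratio tends to $1$ locally uniformly in $y$. This holds uniformly in $n$ because $\psi$ is uniformly continuous on compact subsets of the bounded set on which it is $C^2$; near the boundary it is constant. Passing to the limit therefore yields $v\le1$, even though $v_n\le1$ itself may fail.
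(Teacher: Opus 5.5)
Your proposal is correct and follows essentially the paper's route: $s_n\psi$ touches $u_n$ from above at $x_n$, which contradicts the strict barrier inequality in a neutral interior, and the bounds and $v\le 1$ come from comparing $u_n/\psi$ at $x_n$ with its other values and from uniform continuity of $\psi$. The one difference is that the paper puts $s_n\psi$ directly into the viscosity subsolution inequality for $u_n$, so your detour through $z=s_n\psi-u_n$, the Pucci difference inequality, and $C^2$ regularity of $u_n$ is unnecessary, though valid.
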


\begin{proof}
The ratio $u_n/\psi$ has a positive interior maximum because the
boundary values of $u_n$ are zero and $\psi$ is strictly positive on
$\overline{\Omega}$. Write
\[
s_n=\frac{M_n}{\psi(x_n)}.
\]
At the maximum point $x_n$, the function $s_n\psi$ is an upper test
function for $u_n$.

Suppose that $x_n$ belongs to the interior of a neutral component.
Then
\[
a(x_n)=0
\]
Hence, by the viscosity subsolution inequality,
\[
s_n
\left[
-\mathcal{M}_{\lambda,\Lambda}^{+}
\bigl(D^2\psi(x_n)\bigr)
\right]
\leq
\mu_n s_n\psi(x_n)
\leq
B s_n\psi(x_n),
\]
which contradicts the strict barrier inequality
\[
-\mathcal{M}_{\lambda,\Lambda}^{+}(D^2\psi)
>
B\psi
\]
near every thick neutral component. Therefore, $x_n$ lies outside every
neutral interior.

Let $y_n$ be an ordinary maximum point of $u_n$. Since $x_n$ maximizes
$u_n/\psi$, we have
\[
\frac{M_n}{\psi(x_n)}
\geq
\frac{\|u_n\|_{\infty}}{\psi(y_n)}.
\]
Consequently,
\[
M_n
\geq
\frac{\psi(x_n)}{\psi(y_n)}
\|u_n\|_{\infty}
\geq
\frac{\min_{\overline{\Omega}}\psi}
     {\max_{\overline{\Omega}}\psi}
\|u_n\|_{\infty}.
\]
The upper bound
\[
M_n\leq \|u_n\|_{\infty}
\]
is immediate.

Moreover, by the maximality of $u_n/\psi$,
\[
\frac{u_n(x_n+r_n y)}
     {\psi(x_n+r_n y)}
\leq
\frac{u_n(x_n)}{\psi(x_n)}
=
\frac{M_n}{\psi(x_n)}.
\]
Therefore,
\[
0\leq v_n(y)
=
\frac{u_n(x_n+r_n y)}{M_n}
\leq
\frac{\psi(x_n+r_n y)}{\psi(x_n)}
\leq
\frac{\max_{\overline{\Omega}}\psi}
     {\min_{\overline{\Omega}}\psi}.
\]
Thus the sequence $(v_n)$ has a fixed global bound.

Since $\psi$ is uniformly continuous on $\overline{\Omega}$ and
$\min_{\overline{\Omega}}\psi>0$, for $r_n\to 0$ we have, locally
uniformly in $y$,
\[
\frac{\psi(x_n+r_n y)}{\psi(x_n)}
\longrightarrow 1.
\]
Hence every locally uniform limit $v$ satisfies
\[
0\leq v\leq 1.
\]
Finally,
\[
v_n(0)
=
\frac{u_n(x_n)}{M_n}
=
1,
\]
and therefore
\[
v(0)=1.
\]
\end{proof}




\begin{lemma}\label{scaled:solution}
    Let $\Omega\subset\mathbb{R}^N$ be a bounded domain  with $C^2$ boundary.Let $x_n$ be as in Lemma \ref{lem:2.5} and $u_n$ satisfies \eqref{eq:problem1} with $M_n=u_n(x_n)\to\infty$ as $n\to\infty.$ Consider $y=\frac{x-x_n}{\rho_n}$ where $\rho_n^\frac{2+\gamma}{p-1}M_n=1$. Define $v_n(y)=\rho_n^\frac{2+\gamma}{p-1}u_n(x)$ where $u_n(x)$ is solution of \eqref{eq:problem1}. Then for $n$ large enough, the function $v_n(y)$ is a viscosity solution of 
    \begin{equation}\label{eq:rescaled solution}
        -\mathcal{M}^{+}_{\lambda,\Lambda}(D^2v_n)=\rho_n^2\mu_nv_n+a_n(y)v_n^p~~~~\text{in}~~~\Omega_n
    \end{equation}
    where $a_n(y)=a(x_n+\rho_ny)\rho_n^{-\gamma}$ and
    $\Omega_n=\{y\in\mathbb{R}^N:x_n+\rho_ny\in\Omega\}.$
\end{lemma}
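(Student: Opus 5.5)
The statement is a pure scaling computation, so the plan is to verify it directly in the viscosity sense by transporting test functions through the affine change of variables $x=x_n+\rho_n y$. Write $\beta:=\frac{2+\gamma}{p-1}$, so that $\rho_n^{\beta}M_n=1$ and $v_n(y)=\rho_n^{\beta}u_n(x_n+\rho_n y)$. Since $M_n\to\infty$ and $\beta>0$, we have $\rho_n\to0$. Moreover $y\in\Omega_n$ exactly when $x_n+\rho_n y\in\Omega$, so $v_n$ is defined and continuous on $\overline{\Omega_n}$. The phrase ``for $n$ large enough'' only serves to ensure $\rho_n$ is small and positive; the identity itself holds for every $n$.

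First I would carry out the formal computation for a $C^2$ function. If $\phi$ is $C^2$ near $y_0$, set $\Phi(x):=\rho_n^{-\beta}\phi\bigl((x-x_n)/\rho_n\bigr)$. Then
\[
D^2\Phi(x)=\rho_n^{-\beta-2}D^2\phi(y),
\]
and by positive homogeneity \eqref{eq:pucci-homogeneity},
\[
\Mplus(D^2\Phi(x))=\rho_n^{-\beta-2}\Mplus(D^2\phi(y)).
\]
On the right-hand side, $u_n(x)=\rho_n^{-\beta}v_n(y)$ gives
\[
\mu_n u_n+a(x)u_n^p=\rho_n^{-\beta}\mu_n v_n+a(x_n+\rho_n y)\rho_n^{-\beta p}v_n^p.
\]
Multiplying by $\rho_n^{\beta+2}$ yields $\rho_n^2\mu_n v_n$ and $a(x_n+\rho_n y)\rho_n^{2+\beta-\beta p}v_n^p$. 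The exponent is $2+\beta(1-p)=2-(2+\gamma)=-\gamma$, so the coefficient is exactly $a_n(y)=\rho_n^{-\gamma}a(x_n+\rho_n y)$.

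Next I would make this rigorous in the viscosity sense of Definition \ref{def:viscosity}. Suppose $\phi\in C^2$ touches $v_n$ from above at $y_0\in\Omega_n$. Then $\Phi$ touches $u_n$ from above at $x_0=x_n+\rho_n y_0$, because multiplying by the positive constant $\rho_n^{-\beta}$ and composing with an affine bijection preserve both the ordering and the contact point. The subsolution inequality for $u_n$ at $x_0$ gives
\[
-\Mplus(D^2\Phi(x_0))\le \mu_n u_n(x_0)+a(x_0)u_n(x_0)^p.
\]
Multiplying by $\rho_n^{\beta+2}>0$ and using the identities above produces the subsolution inequality for \eqref{eq:rescaled solution} at $y_0$. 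The supersolution case is symmetric, with test functions touching from below. The correspondence $\phi\leftrightarrow\Phi$ is bijective, so no test functions are lost in either direction.

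There is essentially no obstacle here. The only point needing care is that the homogeneity of $\Mplus$ is used with the positive factor $\rho_n^{-\beta-2}$, which is legitimate because $\rho_n>0$. Note also that $a_n$ is merely continuous, which suffices for the viscosity framework. Whether $a_n$ stays locally bounded as $n\to\infty$ depends on the position of $x_n$ relative to the interfaces, via Assumption \ref{ass:profile}, and is not needed for this lemma.
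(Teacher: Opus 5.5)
Your proposal is correct and follows the paper's route. Like the paper, you transport a test function $\phi$ touching $v_n$ at $y_0$ to $\Phi(x)=\rho_n^{-\beta}\phi\bigl((x-x_n)/\rho_n\bigr)$, which touches $u_n$ at $x_n+\rho_n y_0$; you then use the positive homogeneity of $\mathcal{M}^{+}_{\lambda,\Lambda}$ and multiply by $\rho_n^{\beta+2}$, where the exponent identity $2+\beta(1-p)=-\gamma$ yields exactly $a_n(y)=\rho_n^{-\gamma}a(x_n+\rho_n y)$. Your remarks that the identity holds for every $n$ and that the test-function correspondence is bijective are clarifications the paper leaves implicit.
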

\begin{proof}
  We begin by verifying that \(v_n\) satisfies the viscosity subsolution condition for \eqref{eq:rescaled solution}.\\
 Let \(\phi\in C^2\) and \(y_0\in \Omega_n\) be such that \(v_n-\phi\) attains a local maximum at \(y_0\). Without loss of generality, we may assume that \(\phi\) touches \(v_n\) from above at \(y_0\); that is,
  \[
  (v_n-\phi)(y)\leq(v_n-\phi)(y_0)=0
  \]
  for all $y$ in the neighborhood of $y_0.$ Define 
  \[
  \Psi(x)=\rho_n^{-\big(\frac{2+\gamma}{p-1}\big)}\phi\Big(\frac{x-x_n}{\rho_n}\Big)
  \]
  Then $\Psi$ touches $u_n$ from the above at $x_0=x_n+\rho_ny_0\in\Omega,$ 
  \[
  u_n(x_0)=u_n(x_n+\rho_ny_0)=\rho_n^{-\big(\frac{2+\gamma}{p-1}\big)}v_n(y_0)=\rho_n^{-\big(\frac{2+\gamma}{p-1}\big)}\phi(y_0)=\Psi(x_0),
\]
and
\[
  u_n(x)=u_n(x_n+\rho_ny)=\rho_n^{-\big(\frac{2+\gamma}{p-1}\big)}v_n(y)\leq \rho_n^{-\big(\frac{2+\gamma}{p-1}\big)}\phi(y)=\Psi(x),
  \]
  for all $x$ in a neighborhood of $x_0.$\\
 We next express the derivatives of \(\Psi(x)\) in terms of the corresponding derivatives of \(\phi(y)\)
 \[
 D_x\Psi(x)=\rho_n^{-\big(\frac{2+\gamma}{p-1}\big)-1}D_y\phi\Big(\frac{x-x_n}{\rho_n}\Big),
 \]
 \[
 D^2_x\Psi(x_0)=\rho_n^{-\big(\frac{2+\gamma}{p-1}\big)-2}D^2_y\phi(y_0)
 \]
 Since $u_n$ is a viscosity subsolution of \eqref{eq:problem1}, we have 
 \begin{equation*}
     \begin{aligned}
         -\mathcal{M}_{\lambda,\Lambda}^{+}(D^2\Psi(x_0))&\leq\mu_nu_n(x_0)+a(x_0)u_n^p(x_0)\\
         -\mathcal{M}_{\lambda,\Lambda}^{+}(\rho_n^{-\big(\frac{2+\gamma}{p-1}\big)-2}D^2_y\phi(y_0))&\leq\mu_nv_n(y_0)\rho_n^{-\big(\frac{2+\gamma}{p-1}\big)}+a(x_n+\rho_ny_0)\rho_n^{-\big(\frac{2+\gamma}{p-1}\big)p}v_n^p(y_0)\\
         -\mathcal{M}_{\lambda,\Lambda}^{+}(D^2_y\phi(y_0))&\leq\rho_n^2\mu_nv_n(y_0)+a(x_n+\rho_ny_0)\rho_n^{-\gamma}v_n^p
     \end{aligned}
 \end{equation*}
  Hence, $v_n$ is a subsolution of \eqref{eq:rescaled solution}. The supersolution case is analogous.
\end{proof}
\begin{lemma}\label{scaled:solution2}
    Let $v_n$ be as defined in the above \cref{scaled:solution} which satisfy \eqref{eq:rescaled solution}. Let hypotheses of Lemma \ref{scaled:solution} hold. Let $\delta_n=d(x_n,\partial \Omega) $.  Consider $z=\beta_n^{-1}y$ where $\beta_n=(\rho_n/\delta_n)^{\gamma/2}$. Define $w_n(z)=v_n(\beta_nz)$. Then for $n$ large enough, the function $w_n(z)$ is a viscosity solution of 
    \begin{equation}\label{eq:rescaled solution2}
        -\mathcal{M}^{+}_{\lambda,\Lambda}(D^2w_n)=(\beta_n\rho_n)^2\mu_nw_n+\beta_n^2a_n(\beta_nz)w_n^p~~~~\text{in}~~~\widetilde{\Omega}_n
    \end{equation}
    where 
    $\widetilde{\Omega}_n=\beta_n^{-1}\Omega_n.$
\end{lemma}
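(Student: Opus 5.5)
The plan mirrors the proof of \cref{scaled:solution}: a direct test-function transfer under the change of variables $y=\beta_n z$. This is a pure rescaling with no amplitude change, so $w_n(z)=v_n(\beta_n z)$ and no power of $\beta_n$ multiplies $w_n$. The only effect on the equation is the factor $\beta_n^{2}$ from the chain rule on second derivatives. Before starting, we note that $\widetilde\Omega_n=\beta_n^{-1}\Omega_n$ is exactly the set of $z$ with $\beta_n z\in\Omega_n$. Hence every $z_0\in\widetilde\Omega_n$ corresponds to $y_0=\beta_n z_0\in\Omega_n$, and neighborhoods correspond to neighborhoods. The phrase ``for $n$ large enough'' only ensures that $\rho_n$, $\delta_n$ and $\beta_n$ are well defined and positive: $\delta_n>0$ since $x_n\in\Omega$, and $M_n\to\infty$ gives $\rho_n\to0$.

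\emph{Subsolution property.} Let $\phi\in C^2$ touch $w_n$ from above at $z_0\in\widetilde\Omega_n$. Define $\Phi(y):=\phi(y/\beta_n)$. Then $\Phi$ touches $v_n$ from above at $y_0=\beta_n z_0$, since
\[
v_n(y)=w_n(y/\beta_n)\le\phi(y/\beta_n)=\Phi(y)
\]
near $y_0$, with equality at $y_0$. The chain rule gives $D^2_y\Phi(y_0)=\beta_n^{-2}D^2_z\phi(z_0)$. By \cref{scaled:solution}, $v_n$ is a viscosity subsolution of \cref{eq:rescaled solution}, so
\[
-\Mplus\bigl(\beta_n^{-2}D^2\phi(z_0)\bigr)\le\rho_n^2\mu_n v_n(y_0)+a_n(y_0)v_n^p(y_0).
\]
Positive homogeneity \cref{eq:pucci-homogeneity} (with $s=\beta_n^{-2}>0$) lets us multiply by $\beta_n^2$. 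Substituting $v_n(y_0)=w_n(z_0)$ and $y_0=\beta_n z_0$ then yields
\[
-\Mplus(D^2\phi(z_0))\le(\beta_n\rho_n)^2\mu_n w_n(z_0)+\beta_n^2a_n(\beta_n z_0)w_n^p(z_0),
\]
which is the subsolution inequality for \cref{eq:rescaled solution2}.

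\emph{Supersolution property.} The argument is identical, with test functions touching from below and the inequalities reversed. Since $w_n$ is continuous (as $v_n$ is), it is a viscosity solution.

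The only point requiring care is bookkeeping. Positive homogeneity must be used with a positive scalar, and the coefficient has to be written as $\beta_n^2a_n(\beta_n z)$ rather than re-expanded. Re-expanding gives $\beta_n^2\rho_n^{-\gamma}a(x_n+\rho_n\beta_n z)$, which is the form later useful for estimating $a$ at distance $\sim\delta_n$ from the interface, but it is not needed for the statement itself.
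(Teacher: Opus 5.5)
Your proposal is correct and is essentially the paper's own argument. You transfer a test function touching $w_n$ at $z_0$ to $\Phi(y)=\phi(y/\beta_n)$ touching $v_n$ at $y_0=\beta_n z_0$, use $D_y^2\Phi(y_0)=\beta_n^{-2}D_z^2\phi(z_0)$ together with the viscosity inequality for $v_n$, and multiply by $\beta_n^2$ via positive homogeneity; the supersolution case is symmetric.
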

\begin{proof}
     We begin by verifying that \(w_n\) satisfies the viscosity subsolution condition for \eqref{eq:rescaled solution2}.\\
 Let \(\phi\in C^2\) and \(z_0\in \Omega_n\) be such that \(w_n-\phi\) attains a local maximum at \(z_0\). Without loss of generality, we may assume that \(\phi\) touches \(w_n\) from above at \(z_0\); that is,
  \[
  (w_n-\phi)(z)\leq(w_n-\phi)(z_0)=0
  \]
  for all $z$ in the neighborhood of $z_0.$ Define 
  \[
  \Psi(y)=\phi(\beta_n^{-1}y)
  \]
  Then $\Psi$ touches $v_n$ from the above at $y_0=\beta_nz_0\in\Omega,$ 
  \[
  v_n(y_0)=v_n(\beta_nz_0)=w_n(z_0)=\phi(z_0)=\Psi(y_0),
\]
and
\[
  v_n(y)=v_n(\beta_nz)=w_n(z)\leq \phi(z)=\Psi(y),
  \]
  for all $y$ in a neighborhood of $y_0.$\\
   We next express the derivatives of \(\Psi(y)\) in terms of the corresponding derivatives of \(\phi(z)\)
 \[
 D_y\Psi(y)=\beta_n^{-1}D_z\phi(z),
 \]
 \[
 D^2_y\Psi(y_0)=\beta_n^{-2}D^2_z\phi(z_0)
 \]
 Since $v_n$ is a viscosity subsolution of \eqref{eq:rescaled solution}, we have 
 \begin{equation*}
     \begin{aligned}
         -\mathcal{M}_{\lambda,\Lambda}^{+}(D_y^2\Psi(y_0))&\leq\rho_n^2\mu_nv_n(y_0)+a_n(y_0)v_n^p(y_0)\\
         -\mathcal{M}_{\lambda,\Lambda}^{+}(\beta_n^{-2}D^2_z\phi(z_0))&\leq\rho_n^2\mu_nw_n(z_0)+a_n(\beta_nz_0)w_n^p(z_0)\\
         -\mathcal{M}_{\lambda,\Lambda}^{+}(D^2_z\phi(z_0))&\leq(\beta_n\rho_n)^2\mu_nw_n(z_0)+a_n(\beta_nz_0)\beta_n^2w_n^p(z_0)
     \end{aligned}
 \end{equation*}
  Hence, $w_n$ is a subsolution of \eqref{eq:rescaled solution2}. The supersolution case is analogous.
\end{proof}
\subsection{Exclusion of blow-up at the interfaces}

Let us set
\begin{equation}\label{eq:safe-exponent}
 \psafe:=
 \begin{cases}
   \dfrac{\NtildeP}{\NtildeP-2}, & \NtildeP>2,\\[6pt]
   +\infty, & \NtildeP\le2.
 \end{cases}
~~~~~~~~~ \text{where}~~~~\NtildeP=\frac{\lambda}{\Lambda}(N-1)+1
\end{equation}
\begin{lemma}\label{lemma3.3}
Let $\{x_n\}$ be the sequence defined in Lemma \eqref{lem:2.5} and  $u_n(x_n)\to\infty$ as  $n\to\infty.$ Assume that $p\in(1,\psafe]~~\text{if}~~\NtildeP>2$ and $1<p<\infty~~\text{if}~~\NtildeP\le2$, where $\psafe$ is given by \eqref{eq:safe-exponent}. Suppose further that assumption \ref{ass:geometry} (G2) holds and that $a(x)=\alpha(x)\,[d(x,\Gamma_{+})]^{\gamma}$ for $x\in\Omega^{+}$ near $\Gamma_{+},$ where $\alpha$ is a positive continuous function defined in a neighbourhood of $\Gamma_{+}$ and $\gamma>0.$ Then $\Gamma_{+}$ contains no limit point of the sequence $\{x_n\}.$
\end{lemma}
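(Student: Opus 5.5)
We argue by contradiction and assume, after passing to a subsequence, that $x_n\to x_0\in\Gamma_+$. Write $M_n=u_n(x_n)\to\infty$. By Lemma \ref{lem:2.5}, $x_n$ lies outside the interior of $Z_-^+$, so $x_n\in\overline{\Omega^+}$ near $\Gamma_+$. We use the interface scale $\rho_n=M_n^{-\frac{p-1}{2+\gamma}}$ and the rescaled functions $v_n(y)=u_n(x_n+\rho_n y)/M_n$ of Lemma \ref{scaled:solution}. These solve \eqref{eq:rescaled solution} with $a_n(y)=\rho_n^{-\gamma}a(x_n+\rho_n y)$. Set $d_n:=d_{\Gamma_+}(x_n)$. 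The proof splits according to the behaviour of $d_n/\rho_n$.

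\emph{Case 1: $d_n/\rho_n$ is bounded.} Passing to a subsequence, $d_n/\rho_n\to d_0\ge0$. Rotate coordinates so that the inner normal to $\Gamma_+$ at the projection of $x_0$ (pointing into $\Omega^+$) is $e_1$. Since $\Gamma_+$ is a $C^2$ manifold, on compact sets we have
\[
\rho_n^{-1}\,d_{\Gamma_+}(x_n+\rho_n y)\to (y_1+d_0)_+ \quad\text{on the }\Omega^+\text{ side},
\]
while $a=0$ on the $Z_-^+$ side. Hence $a_n(y)\to \alpha(x_0)(y_1+d_0)_+^{\gamma}$ locally uniformly. Because $\Gamma_+$ is disjoint from $\Gamma_-$, the region $\Omega^-$ stays at positive distance and does not enter the rescaled picture. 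Also $\Omega_n\to\R^N$, because $\Gamma_+\Subset\Omega$, and $\rho_n^2\mu_n\to0$ because $\mu_n\in\Ipar$ is bounded.

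Lemma \ref{lem:2.5} gives a uniform bound on $v_n$, so Proposition \ref{lem:compactness} yields a locally uniform limit $v$. This $v$ is bounded, nonnegative, satisfies $v(0)=1$, and solves
\[
-\Mplus(D^2v)=\alpha(x_0)\,(y_1+d_0)_+^{\gamma}\,v^p\quad\text{in }\R^N.
\]
Translating $y_1$ by $d_0$ puts this in the form of Corollary \ref{cor:power-weight} with $c_-=0$. That corollary follows from Theorem \ref{thm:weighted-liouville}: we check (L1)--(L4) for $h(t)=c_+t_+^\gamma$ and $f(s)=s^p$, with (L2) holding since $h/h'=t/\gamma$. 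The corollary forces $v\equiv0$, contradicting $v(0)=1$.

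\emph{Case 2: $d_n/\rho_n\to\infty$.} Now $a(x_n)\approx\alpha(x_0)d_n^\gamma$ and the scale $\rho_n$ is too coarse. We switch to the natural local scale $r_n=(a(x_n)M_n^{p-1})^{-1/2}$. Equivalently, we apply Lemma \ref{scaled:solution2} with $\delta_n$ replaced by $d_n$, so that $\beta_n=(\rho_n/d_n)^{\gamma/2}\to0$. One checks that
\[
r_n/d_n=(\rho_n/d_n)^{1+\gamma/2}\to0.
\]
Then on compact sets in $z$, the ratio $d_{\Gamma_+}(x_n+r_nz)/d_n\to1$. The coefficient $\beta_n^2a_n(\beta_n z)$ therefore converges to the constant $\alpha(x_0)$, and $(\beta_n\rho_n)^2\mu_n\to0$. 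The limit $w$ is bounded, satisfies $w(0)=1$, and solves
\[
-\Mplus(D^2w)=\alpha(x_0)w^p\quad\text{in }\R^N.
\]
Since $p\le\psafe=\pCL$, Theorem \ref{thm:whole-space} gives $w\equiv0$, a contradiction. If $\NtildeP\le2$, any $p>1$ is allowed.

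The main obstacle is the locally uniform convergence of $a_n$ in Case 1 near the curved interface. We have to control $\rho_n^{-1}d_{\Gamma_+}(x_n+\rho_n y)$ uniformly using the $C^2$ tubular-neighbourhood structure of $\Gamma_+$. We also have to confirm that points on the $Z_-^+$ side contribute $a=0$, which matches the $(\cdot)_+$ profile. Continuity of $\alpha$ then handles the replacement of $\alpha(x_n+\rho_n y)$ by $\alpha(x_0)$.
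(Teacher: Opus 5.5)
Your proposal is correct and follows the paper's proof. It uses the same interface scale $\rho_n=M_n^{-(p-1)/(2+\gamma)}$ and the same dichotomy on whether $d_{\Gamma_+}(x_n)/\rho_n$ stays bounded. In the bounded case you apply the weighted Liouville theorem with $h(t)=\alpha(x_0)t_+^{\gamma}$; in the unbounded case you rescale again by $\beta_n=(\rho_n/d_n)^{\gamma/2}$ and apply the Cutr\`i--Leoni whole-space theorem. Your check of (L1)--(L4) for the power weight, and your reading of $\delta_n$ as the distance to $\Gamma_+$ (not to $\partial\Omega$, as misprinted in the second rescaling lemma), are what the paper's argument implicitly uses.
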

\begin{proof}
Assume by contradiction that there exists a subsequence such that $x_n\to x_0\in\Gamma_+.$ As $\Gamma_+$ is at the positive distance from $\overline{\Omega^{-}}$ and Lemma \eqref{lem:2.5} ensures that $x_{n}\not\in\text{int}(Z^{+}_{-})$ therefore if necessary after passing to a subsequence we may assume that $\{x_n\}\subset\overline{\Omega^+}.$ Let us set 
\[M_n:=u_n(x_n),~~\rho_n:=M_n^\frac{-(p-1)}{(2+\gamma)},\] 
and observe that $M_n\to\infty$ and $\rho_n\to0$ as $n\to\infty.$ Following the standard approach in scaling and blow-up technique let us define the following rescaled function
\[v_n(y):=\frac{u_n(x_{n}+\rho_{n}y)}{M_n},\]
By Lemma \ref{scaled:solution}, $v_{n}$ satisfies 
\begin{equation}\label{change}
-\mathcal{M}^{+}_{\lambda,\Lambda}(D^2v_n)=\rho_n^2\mu_nv_n+a_n(y)v_n^p~~\text{in}~~\Omega_{n}
\end{equation}
where 
\[a_n(y)=a(x_n+\rho_ny)\rho_n^{-\gamma}~~\text{and}~~\Omega_{n}=\{y\in\mathbb{R}^{N}~|~x_{n}+\rho_{n}y\in\Omega\}.\]
Let $\delta_n=d(x_n,\Gamma_+)=d(x_n,z_n),$ where $z_n\in\Gamma_+.$ We may assume that $z_n\to z_0.$ Denote by $\nu_n$ the unit normal of $\Gamma_+$ at $z_n$ pointing inward of $\Omega^+.$ We obtain
$d_{\Gamma_+}(x_n+\rho_ny)=|\delta_n+\rho_n\nu_n.y+o(\rho_n)|$ and for large $n,$
\begin{equation}\label{eq4.3}
a_n(y)=\alpha(x_n+\rho_ny)\Big[\Big(\frac{\delta_n}{\rho_n}\Big)+\nu_n.y+o(1)\Big]^\gamma~~~\text{as long as}~~x_n+\rho_ny\in\Omega^+.
\end{equation}
    (i) Suppose that $\left\{\frac{\delta_n}{\rho_n}\right\}$ is bounded from above. By passing to a suitable subsequence, we may assume that $\frac{\delta_n}{\rho_n}\to s\ge0~~\text{as}~~n\to\infty.$ Then by \eqref{eq4.3}, $a_n(y)\to h(s+\nu_0.y)~~\text{as}~~n\to\infty,$ where $h(t)=\alpha(x_0)(t^+)^\gamma.$\\
Now, by applying Proposition \ref{lem:compactness}, we obtain that 
    $$\lim\limits_{n\to\infty}v_n=v,~~\text{uniformly in}~~B_R(0)~\forall ~R>0~\text{and}~v(0)=1.$$
    and then from \eqref{change}, Using stability of viscosity solutions we obtain
    \begin{equation*}
    \left\{
    \begin{aligned}
-\mathcal{M}^{+}_{\lambda,\Lambda}(D^2v)&=h(s+\nu_0.y)v^p~~\text{in}~~\mathbb{R}^N\\
v&\ge0~~\text{in}~~\mathbb{R}^N\\
v(0)&=\max_{\mathbb{R}^{N}}{v}=1.
\end{aligned}
\right.
\end{equation*}
By a translation and rotation of the coordinates we may assume that $s+\nu_0.y=y_1$ and hence $v$ is a solution of
\begin{equation}\label{refint}
\left\{
\begin{aligned}
-\mathcal{M}^{+}_{\lambda,\Lambda}(D^2v)&=h(y_1)v^p~~\text{in}~~\mathbb{R}^N\\
v&\ge0~~\text{in}~~\mathbb{R}^N\\
\max_{\mathbb{R}^{N}}{v}&=1.
\end{aligned}
\right.
\end{equation}
This contradicts \cref{thm:weighted-liouville}.\\
(ii). Suppose that $\left\{\frac{\delta_n}{\rho_n}\right\}$ is unbounded from above. By selecting a suitable subsequence, we may assume that $\beta_n:=(\rho_n/\delta_n)^{\gamma/2}\to0$ as $n\to\infty.$ By introducing the variable $z:=\beta_{n}^{-1}y$ and $w_n(z):=v_n(\beta_nz).$ By Lemma \ref{scaled:solution2}, $w_n$ satisfies
\begin{equation*}
-\mathcal{M}^{+}_{\lambda,\Lambda}(D^2w_n)=(\beta_n\rho_n)^2\mu_nw_n+\beta_n^2a_n(\beta_nz)w_n^p.
\end{equation*}
Observe that
\begin{equation*}
\beta_n^2a_n(\beta_nz)=\alpha(x_n+\beta_n\rho_nz)[1+\beta_n^{1+2/\gamma}\nu_n.z+o(\beta_n)]^\gamma\to\alpha(x_0)~~\text{as}~~n\to\infty.
\end{equation*}
It follows that the problem 
\begin{equation*}
\left\{
\begin{aligned}
-\mathcal{M}^{+}_{\lambda,\Lambda}(D^2w)&=\alpha(x_0)w^p~~\text{in}~~\mathbb{R}^N\\
w&\ge0~~\text{in}~~\mathbb{R}^N\\
w(0)&=1
\end{aligned}
\right.
\end{equation*}
has a solution, contradicting Theorem 4.1\cite{CutriLeoni2000}.\\

\end{proof}
\begin{lemma}
Let $\{x_n\}$ be the sequence defined in Lemma \eqref{lem:2.5} and suppose that $u_n(x_n)\to\infty$  as  $n\to\infty.$ Assume that $p\in(1,\psafe]~~\text{if}~~\NtildeP>2$ and $1<p<\infty~~\text{if}~~\NtildeP\le2$, where $\psafe$ is given by \eqref{eq:safe-exponent}. Assume also that  assumption \ref{ass:geometry} (G1) holds and 
\begin{align*}
a(x)&=\alpha_1(x)\,[d(x,\Gamma)]^{\gamma_2}
\quad \text{for } x\in\Omega^{+} \text{ near } \Gamma,\\
a(x)&=-\alpha_2(x)\,[d(x,\Gamma)]^{\gamma_2}
\quad \text{for } x\in\Omega^{-} \text{ near } \Gamma,
\end{align*}
where $\alpha_1(x)$ and $\alpha_2(x)$ are positive continuous functions defined in a neighbourhood of $\Gamma$ and $\gamma_2>0.$ Then $\Gamma$ contains no limit point of the sequence $\{x_n\}.$
\end{lemma}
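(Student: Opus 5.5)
We argue by contradiction, as in Lemma~\ref{lemma3.3}. Suppose that, along a subsequence, $x_n\to x_0\in\Gamma$. Put $M_n=u_n(x_n)$ and $\rho_n=M_n^{-(p-1)/(2+\gamma_2)}$, and let $v_n(y)=u_n(x_n+\rho_n y)/M_n$. By Lemma~\ref{scaled:solution} (with $\gamma=\gamma_2$), $v_n$ solves \eqref{eq:rescaled solution} with $a_n(y)=\rho_n^{-\gamma_2}a(x_n+\rho_n y)$. By Lemma~\ref{lem:2.5}, the $v_n$ are uniformly bounded and $v_n(0)=1$. Since $x_0\in\Gamma\subset\Omega$ is interior, $\Omega_n\to\R^N$. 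Let $z_n\in\Gamma$ be the nearest point to $x_n$, let $\nu_n$ be the unit normal to $\Gamma$ at $z_n$ pointing into $\Omega^+$, and define the signed distance $\delta_n=\pm d(x_n,\Gamma)$, positive when $x_n\in\Omega^+$. The smoothness of $\Gamma$ gives the expansion
\[
d_\Gamma^{\pm}(x_n+\rho_n y)=\delta_n+\rho_n\nu_n\cdot y+o(\rho_n)
\]
for the signed distance. Combined with \eqref{eq:thin}, this yields
\[
a_n(y)=\alpha_1(x_n+\rho_n y)\pos{t_n(y)}^{\gamma_2}-\alpha_2(x_n+\rho_n y)\negpart{t_n(y)}^{\gamma_2},
\qquad t_n(y)=\frac{\delta_n}{\rho_n}+\nu_n\cdot y+o(1),
\]
locally uniformly in $y$.

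\textbf{Case (i): $\delta_n/\rho_n$ bounded.} Pass to a subsequence with $\delta_n/\rho_n\to s\in\R$ and $\nu_n\to\nu_0$. Then $a_n\to h(s+\nu_0\cdot y)$ locally uniformly, where
\[
h(t)=\alpha_1(x_0)\pos{t}^{\gamma_2}-\alpha_2(x_0)\negpart{t}^{\gamma_2}.
\]
Also $\rho_n^2\mu_n v_n\to0$. Proposition~\ref{lem:compactness} gives local uniform convergence $v_n\to v$, where $v$ is a bounded nonnegative viscosity solution of $-\Mplus(D^2v)=h(s+\nu_0\cdot y)v^p$ with $v(0)=1$. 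After a rotation and translation the weight becomes $h(y_1)$. Corollary~\ref{cor:power-weight}, with $c_+=\alpha_1(x_0)>0$, $c_-=\alpha_2(x_0)\ge0$ and $\gamma=\gamma_2$, then forces $v\equiv0$, contradicting $v(0)=1$. We should check that $h$ satisfies (L1)--(L2): $h$ is H\"older continuous and increasing, and $h/h'=t/\gamma_2$ near $0^+$. We should also check (L3)--(L4) for $f(s)=s^p$: here $sf'/f=p$.

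\textbf{Case (ii): $\delta_n/\rho_n\to+\infty$.} Then $x_n\in\Omega^+$. Rescale by $\beta_n=(\rho_n/\delta_n)^{\gamma_2/2}\to0$ as in Lemma~\ref{scaled:solution2}, and set $w_n(z)=v_n(\beta_n z)$. The coefficient satisfies $\beta_n^2a_n(\beta_n z)\to\alpha_1(x_0)$ locally uniformly. The reason is that $\beta_n\rho_n|z|/\delta_n=\beta_n^{1+2/\gamma_2}|z|\to0$, so the points stay on the positive side. The $w_n$ are still uniformly bounded with $w_n(0)=1$. The limit $w$ therefore solves $-\Mplus(D^2w)=\alpha_1(x_0)w^p$ in $\R^N$ with $w\ge0$ and $w(0)=1$. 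This contradicts Theorem~\ref{thm:whole-space}, using $p\le\psafe=\pCL$.

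\textbf{Case (iii): $\delta_n/\rho_n\to-\infty$.} Here $x_n\in\Omega^-$, and this is the main obstacle. The same rescaling by $\beta_n=(\rho_n/|\delta_n|)^{\gamma_2/2}$ gives a bounded limit $w\ge0$ with $w(0)=1$ solving $-\Mplus(D^2w)=-\alpha_2(x_0)w^p$. This is not excluded by a Liouville theorem: constants fail to solve it, but nonconstant bounded solutions need ruling out. I would handle it with the maximality of $u_n/\psi$. The test function $s_n\psi$ touches $u_n$ from above at $x_n$. Since $x_0\in\Gamma$ lies away from the thick neutral components, one may take $\psi\equiv c$ near $x_0$. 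The subsolution inequality then gives
\[
0\le \mu_n u_n(x_n)+a(x_n)u_n(x_n)^p .
\]
Rescaling, this reads $0\le(\beta_n\rho_n)^2\mu_n-\beta_n^2a_n(0)$-type terms, which tends to $0\le-\alpha_2(x_0)<0$. More directly, $w$ attains its maximum $1$ at $0$, so at that point $-\Mplus(D^2w)\ge0$ in the viscosity sense, while the right-hand side equals $-\alpha_2(x_0)<0$; this is a contradiction. The same maximum-point argument also disposes of the fact that $\alpha_2$ was only assumed positive, so the case works. The delicate point is justifying that $0$ remains a maximum of the limit, which holds since the ratio bound in Lemma~\ref{lem:2.5} gives $0\le w\le1$.
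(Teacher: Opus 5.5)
Your proposal is correct and follows essentially the same route as the paper. It uses the blow-up at scale $\rho_n=M_n^{-(p-1)/(2+\gamma_2)}$, the weighted Liouville theorem (via Corollary~\ref{cor:power-weight}) when $\delta_n/\rho_n$ stays bounded, and the secondary $\beta_n$-rescaling, with Cutri--Leoni on the positive side and the constant test function at the maximum point $0$ on the negative side. Your signed distance only merges the paper's two one-sided subcases into your three cases, and you correctly note that $0\le w\le 1=w(0)$ is all the negative case needs, rather than the paper's claim that $w\equiv 1$.
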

\begin{proof}
Suppose by contradiction that $x_n\to x_0\in\Gamma$ (if necessary we can take subsequence). By passing to a further subsequence if necessary we may assume that $\{x_n\}\in\overline{\Omega^-}$ or $\{x_n\}\in\overline{\Omega^+}.$ For definiteness we assume that the first case occurs. As in the Lemma \ref{lemma3.3}, let us set 
\[M_n:=u_n(x_n)~~\text{and}~~\rho_n:=M_n^\frac{-(p-1)}{(2+\gamma_2)}.\] We find that $M_n\to\infty$ and $\rho_n\to0$ as $n\to\infty.$ Let us also set 
\begin{align*}
\Omega_{n}&=\{y\in\mathbb{R}^{N}~|~x_{n}+\rho_{n}y\in\Omega\}\\
\Omega_{n}^{+}&=\{y\in\mathbb{R}^{N}~|~x_{n}+\rho_{n}y\in\Omega^{+}\}\\
\Omega_{n}^{-}&=\{y\in\mathbb{R}^{N}~|~x_{n}+\rho_{n}y\in\Omega^{-}\}.
\end{align*}
Note that as $u_{n}$ solves \eqref{eq:problem1} with $\mu=\mu_{n}$ we find that the rescaled function \[v_n(y):=\frac{u_n(x_{n}+\rho_{n}y)}{M_n}\] 
satisfies the following equation
\begin{equation}\label{eq4.41}
-\mathcal{M}^{+}_{\lambda,\Lambda}(D^2v_n)=\rho_n^2\mu_nv_n+a_n(y)v_n^p~~\text{in}~~\Omega_{n},
\end{equation}
where $a_n(y)=a(x_n+\rho_ny)\rho_n^{-\gamma_2}.$ We denote by $\delta_n=d(x_n,\Gamma)=d(x_n,z_n),$ where $z_n\in\Gamma.$ We may also assume that $z_n\to z_0.$ Denote by $\nu_n$ the unit normal of $\Gamma$ at $z_n$ pointing inward of $\Omega^-.$ With this notation in hand it is easy to see
\[d_{\Gamma}(x_n+\rho_ny)=|\delta_n+\rho_n\nu_n.y+o(\rho_n)|\]
and 
\begin{equation}\label{eq4.42}
\left\{
\begin{aligned}
(a)~a_n(y)&=-\alpha_2(x_n+\rho_n y)|(\delta_n/\rho_n)+\nu_n.y+o(1)|^{\gamma_2}~~~\text{provided}~~x_n+\rho_n y\in\Omega^-,\\
(b)~a_n(y)&=\alpha_1(x_n+\rho_n y)|(\delta_n/\rho_n)+\nu_n.y+o(1)|^{\gamma_2}~~~\text{provided}~~x_n+\rho_n y\in\Omega^+.
\end{aligned}
\right.
\end{equation}
(i) Suppose that $\left\{\frac{\delta_n}{\rho_n}\right\}$ is bounded from above. By passing to a suitable subsequence, we may assume that $\frac{\delta_n}{\rho_n}\to s\ge0~~\text{as}~~n\to\infty.$ Then by \eqref{eq4.42}
\[a_n(y)\to h(s+\nu_0.y)~~\text{as}~~n\to\infty,\]   
where $h(t)=\alpha_1(x_0)(t^-)^{\gamma_2}-\alpha_2(x_0)(t^+)^{\gamma_2}.$
Then by applying Proposition \ref{lem:compactness}, we get
\begin{equation*}
\left\{
\begin{aligned}
&\lim\limits_{n\to\infty}v_n=v,~~\text{uniformly in}~~B_R(0)~\forall ~R>0,\\
&\text{and}\\
&v(0)=1.
\end{aligned}
\right.
\end{equation*}
Thus, in view of \eqref{eq4.41} and stability results results for viscosity solutions we have:
\begin{equation*}
\left\{
\begin{aligned}
-\mathcal{M}^{+}_{\lambda,\Lambda}(D^2v)&=h(s+\nu_0.y)v^p~~\text{in}~~\mathbb{R}^N,\\
v&\ge0~~\text{in}~~\mathbb{R}^N\\
v(0)&=\max_{\mathbb{R}^N}{v}=1.
\end{aligned}
\right.
\end{equation*}
As Pucci's operator is invariant under rotation and translation. Therefore, by appropriate translation and rotation of the coordinates we may assume $s+\nu_0.y=-y_1$ and hence $v$ is a solution of
\begin{equation*}
\left\{
\begin{aligned}
-\mathcal{M}^{+}_{\lambda,\Lambda}(D^2v)&=h(-y_1)v^p~~\text{in}~~\mathbb{R}^N,\\
v&\ge0~~\text{in}~~\mathbb{R}^N\\
\max_{\mathbb{R}^N}{v}&=1.
\end{aligned}
\right.
\end{equation*}
This contradicts  \cref{thm:weighted-liouville}.\\
(ii). Suppose that $\left\{\frac{\delta_n}{\rho_n}\right\}$ is unbounded from above. By selecting a suitable subsequence, we may assume  $\beta_n:=(\rho_n/\delta_n)^{{\gamma_2}/2}\to0$ as $n\to\infty.$ Let us set $z:=\beta_{n}^{-1}y$ and $w_n(z):=v_n(\beta_nz).$ Then by \cref{scaled:solution2}, $w_{n}$ satisfies 
\[-\mathcal{M}^{+}_{\lambda,\Lambda}(D^2w_n)=(\beta_n\rho_n)^2\mu_nw_n+\beta_n^2a_n(\beta_nz)w_n^p,\]
 Observe that
\[\beta_n^2a_n(\beta_nz)=-\alpha_2(x_n+\beta_n\rho_nz)[1+\beta_n^{1+2/\gamma_2}\nu_n.z+o(\beta_n)]^{\gamma_2}\to-\alpha_2(x_0)~~\text{as}~~n\to\infty.\]
It follows that the problem 
\begin{equation}\label{eq4.44}
\left\{
\begin{aligned}
-\mathcal{M}^{+}_{\lambda,\Lambda}(D^2w)&=-\alpha_2(x_0)w^p~~\text{in}~~\mathbb{R}^N,\\
w&\ge0~~\text{in}~~\mathbb{R}^N\\
w(0)&=1.
\end{aligned}
\right.
\end{equation}
has a solution.\\
Then by strong maximum principle, $w\equiv 1.$
Now, Take a constant test function $\phi(x)=1$, since $w(x)\leq w(0)=1$. Therefore, $\phi$ touches $w$ from above at $x=0.$ Since $w$ is a viscosity subsolution of \cref{eq4.44}, Then 
\begin{equation*}
\begin{aligned}
     -\mathcal{M}^{+}_{\lambda,\Lambda}(D^2\phi(0))&\leq-\alpha_2(x_0)w(0)^p\\
     0\leq -\alpha_2(x_0)
\end{aligned}   
\end{equation*}
which contradicts positivity of $\alpha.$

If $\{x_n\}\subset\overline{\Omega^+}$ occurs, then the proof for case (i) is similar but for case (ii), instead of \eqref{eq4.44}, we obtain
\begin{equation}\label{eq4.44+}
\left\{
\begin{aligned}
-\mathcal{M}^{+}_{\lambda,\Lambda}(D^2w)&=\alpha_1(x_0)w^p~~\text{in}~~\mathbb{R}^N,\\
w&\ge0~~\text{in}~~\mathbb{R}^N\\
w(0)&=1
\end{aligned}
\right.
\end{equation}
 which contradicting  Theorem 4.1 in  \cite{CutriLeoni2000}, the only viscosity supersolution of \eqref{eq4.44+} is $w\equiv0.$  \\
 \end{proof}
 
\begin{lemma}
Let $\{x_n\}$ be the sequence defined in Lemma \eqref{lem:2.5} and $u_n(x_n)\to\infty$ as $n\to\infty.$ Assume that $p\in(1,\psafe]~~\text{if}~~\NtildeP>2$ and $1<p<\infty~~\text{if}~~\NtildeP\le2$, where $\psafe$ is defined by \eqref{eq:safe-exponent}. Suppose further that
\[
a(x)=\alpha_3(x)\,[d(x,\partial Z_{+})]^{\gamma_3}
\quad \text{for } x\in\Omega^{+} \text{ near } \partial Z_{+},
\]
where \(\alpha_3(x)\) is a positive continuous function defined in a neighbourhood of \(\partial Z_{+}\), and \(\gamma_3>0\). Then \(\partial Z_{+}\) contains no limit point of the sequence \(\{x_n\}\).                                      
\end{lemma}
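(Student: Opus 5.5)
We argue by contradiction, following the scheme of Lemma \ref{lemma3.3}. Suppose that, along a subsequence, $x_n\to x_0\in\partial Z_+$. By Lemma \ref{lem:2.5} the points $x_n$ never lie in the interior $Z_+^{\circ}$. Assumption \ref{ass:geometry} gives $\partial Z_+\subset\overline{\OmP}$, and $\partial Z_+$ lies at positive distance from $\overline{\OmM}$ and from $\partial\Om$ (recall $\OmZ\Subset\Om$). Hence, for $n$ large, we may assume $x_n\in\overline{\OmP}$ near $\partial Z_+$.

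\textbf{Rescaling.} Set $M_n:=u_n(x_n)\to\infty$ and $\rho_n:=M_n^{-(p-1)/(2+\gamma_3)}\to0$, and define $v_n(y)=u_n(x_n+\rho_n y)/M_n$. By Lemma \ref{scaled:solution} (with $\gamma=\gamma_3$), $v_n$ solves
\[
-\Mplus(D^2v_n)=\rho_n^2\mu_nv_n+a_n(y)v_n^p,\qquad a_n(y)=\rho_n^{-\gamma_3}a(x_n+\rho_ny).
\]
Lemma \ref{lem:2.5} gives $0\le v_n\le C$ uniformly and $v_n(0)=1$. Let $\delta_n=d(x_n,\partial Z_+)=|x_n-z_n|$ with $z_n\in\partial Z_+$, $z_n\to x_0$, and let $\nu_n\to\nu_0$ be the unit normal at $z_n$ pointing into $\OmP$. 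Since $\partial Z_+$ is $C^2$ (it is a boundary piece of the thick set $Z_+$, whose interior we take with $C^2$ boundary as in Lemma \ref{lem:positive-barrier}), we have $d_{\partial Z_+}(x_n+\rho_ny)=|\delta_n+\rho_n\nu_n\cdot y+o(\rho_n)|$ locally uniformly in $y$. Therefore
\[
a_n(y)=\alpha_3(x_n+\rho_ny)\bigl[(\delta_n/\rho_n)+\nu_n\cdot y+o(1)\bigr]_+^{\gamma_3}.
\]
Points mapped into $Z_+$ give $a_n=0$, consistent with the positive-part formula. Crucially, there is no negative region nearby, so $a_n\ge0$ throughout the relevant balls.

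\textbf{Case (i): $\delta_n/\rho_n\to s\in[0,\infty)$.} The coefficients $a_n$ converge locally uniformly to $h(s+\nu_0\cdot y)$ with $h(t)=\alpha_3(x_0)t_+^{\gamma_3}$. Proposition \ref{lem:compactness} and viscosity stability yield a limit $v$ with $0\le v\le1$, $v(0)=1$, solving $-\Mplus(D^2v)=h(s+\nu_0\cdot y)v^p$ in $\R^N$. After a rotation and translation this becomes $-\Mplus(D^2v)=\alpha_3(x_0)(y_1)_+^{\gamma_3}v^p$.

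This $h$ satisfies (L1): it is $C^{\alpha}_{loc}$ for a small $\alpha\in(0,1)$, it is $C^1$ away from $0$, and $h'>0$ for $t>0$. It satisfies (L2) because $h/h'=t/\gamma_3\to0$. With $f(s)=s^p$, (L3) and (L4) hold since $sf'/f=p$. Moreover $h\to+\infty$, so Theorem \ref{thm:weighted-liouville} (equivalently Corollary \ref{cor:power-weight} with $c_-=0$) forces $v\equiv0$, contradicting $v(0)=1$.

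\textbf{Case (ii): $\delta_n/\rho_n\to\infty$.} Set $\beta_n=(\rho_n/\delta_n)^{\gamma_3/2}\to0$ and $w_n(z)=v_n(\beta_nz)$. By Lemma \ref{scaled:solution2}, $w_n$ solves the equation with coefficient $\beta_n^2a_n(\beta_nz)$. Since $\beta_n\rho_n|z|/\delta_n\to0$ locally uniformly, $x_n+\rho_n\beta_nz\in\OmP$ and
\[
\beta_n^2a_n(\beta_nz)=\alpha_3(x_n+\rho_n\beta_nz)\bigl(1+O(\beta_n\rho_n|z|/\delta_n)\bigr)^{\gamma_3}\to\alpha_3(x_0).
\]
The zeroth-order term $(\beta_n\rho_n)^2\mu_n w_n\to0$ because $\mu_n$ is bounded. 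The limit $w\ge0$, $w(0)=1$, satisfies $-\Mplus(D^2w)=\alpha_3(x_0)w^p$ in $\R^N$. Since $p\le\psafe=\pCL$, Theorem \ref{thm:whole-space} gives a contradiction.

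\textbf{Main obstacle.} The delicate step is justifying the geometric expansion of $d_{\partial Z_+}$ and checking that the limit domains $\Omega_n$ exhaust $\R^N$. The latter holds because $\delta(x_n,\partial\Om)$ stays bounded below, so $\Omega_n\to\R^N$. In case (ii) one must also verify that the blow-up ball stays inside $\OmP$; this is exactly where $\beta_n\to0$ is used.
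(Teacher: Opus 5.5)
Your proposal is correct and is essentially the paper's proof. The paper simply states that the argument of Lemma~\ref{lemma3.3} carries over, and you carry it out with $\partial Z_+$, $\alpha_3$, $\gamma_3$ in place of $\Gamma_+$, $\alpha$, $\gamma$: the regime $\delta_n/\rho_n$ bounded is excluded by \cref{thm:weighted-liouville} with $h(t)=\alpha_3(x_0)t_+^{\gamma_3}$, and the regime $\delta_n/\rho_n\to\infty$ is excluded by the Cutr\`i--Leoni theorem after the secondary rescaling by $\beta_n$, with your extra checks (distance of $\partial Z_+$ from $\overline{\OmM}$ and $\partial\Om$, (L1)--(L4) for the limiting weight, $\beta_n\rho_n/\delta_n\to0$) filling in details the paper leaves implicit.
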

\begin{proof}
  The same argument as in Lemma \eqref{lemma3.3} can be applied.
\end{proof} 
\begin{lemma}
Let $\{x_n\}$ be the sequence defined in Lemma \eqref{lem:2.5} and $u_n(x_n)\to\infty $ as $n\to\infty.$ Assume that $p>1$ and assumption \ref{ass:geometry} (G2) holds. Suppose also
\[m\leq \frac{-a(x)}{[d(x,\Gamma_{-})]^{\gamma_4}} \leq M~~\text{for}~~x\in\Omega^{-}~~\text{near}~~\Gamma_{-},\]
where $0<m<M$ and $\gamma_4>0.$ Then $\Gamma_{-}$ does not contain any limit point of $\{x_n\}.$ Similarly, if the above condition on $a(x)$ holds for $x\in\Omega^{-}$ near $\partial Z_{-},$ then $\partial Z_{-}$ does not contain any limit point  contains no limit point of $\{x_n\}.$
\end{lemma}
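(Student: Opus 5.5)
We argue by contradiction and assume that, along a subsequence, $x_n\to x_0\in\Gamma_-$; the case of $\partial Z_-$ is identical. By Lemma \ref{lem:2.5}, $x_n$ does not lie in the interior of any neutral component. Since $\Gamma_-$ has positive distance from $\overline{\OmP}$, we may therefore assume $\{x_n\}\subset\overline{\OmM}$ near $\Gamma_-$. Here the weight is only two-sided comparable to $-d_{\Gamma_-}^{\gamma_4}$, so it has no exact profile. This does not matter, because the limiting problems will carry a nonpositive coefficient, and we will use only signs and bounds, not the form of the limit. Set $M_n=u_n(x_n)$, $\rho_n=M_n^{-(p-1)/(2+\gamma_4)}$ and $v_n(y)=u_n(x_n+\rho_ny)/M_n$. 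By Lemma \ref{scaled:solution}, $v_n$ solves \eqref{eq:rescaled solution} with $a_n(y)=\rho_n^{-\gamma_4}a(x_n+\rho_ny)$. By Lemma \ref{lem:2.5}, $0\le v_n\le C$ and $v_n(0)=1$. Let $\delta_n=d(x_n,\Gamma_-)$.

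\emph{Case (i): $\delta_n/\rho_n$ bounded.} Here $a_n\le0$ on the rescaled negative side, $a_n=0$ on the rescaled thick layer $Z_-^+$, and $|a_n(y)|\le M(\delta_n/\rho_n+|y|+1)^{\gamma_4}$ is locally bounded. Proposition \ref{lem:compactness} then gives a subsequence $v_n\to v$ locally uniformly in $\R^N$. The coefficients $a_n$ converge weakly-$*$ (or, after a further subsequence, locally uniformly off the limiting hyperplane, by continuity of $a$) to some $a_\infty\le0$. Thus $v$ satisfies $-\Mplus(D^2v)\le0$ in the viscosity sense, together with $0\le v\le1=v(0)$. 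So $v$ is a subsolution attaining an interior maximum. By the strong maximum principle (Theorem \ref{lem:smp-hopf} applied to $1-v$, using $\Mminus(D^2(1-v))=-\Mplus(D^2v)\le0$) we get $v\equiv1$. Put this back into the equation. On the half-space where the rescaled points lie in $\OmM$, $a_\infty$ is bounded above by $-m\,\mathrm{dist}(y,H)^{\gamma_4}<0$, where $H$ is the limiting hyperplane. Testing with the constant function $1$ at a point of that half-space gives $0\le a_\infty<0$, a contradiction, exactly as in the argument following \eqref{eq4.44}. To make this rigorous without a pointwise limit of $a_n$, we test $v_n$ directly with $\phi\equiv\max v_n$ near points of $\Omega_n^-$. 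A cleaner alternative: $v_n\to1$ uniformly on a ball $B$ contained in the negative side. Then $-\Mplus(D^2v_n)\le\rho_n^2\mu_nv_n-c\,v_n^p\le-c/2$ on $B$. Comparing with the paraboloid $1-\frac{c}{4N\Lambda}|y-y_B|^2$ contradicts $v_n\to1$.

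\emph{Case (ii): $\delta_n/\rho_n\to\infty$.} We rescale further as in Lemma \ref{scaled:solution2}, with $\beta_n=(\rho_n/\delta_n)^{\gamma_4/2}\to0$ and $w_n(z)=v_n(\beta_nz)$. The coefficient $\beta_n^2a_n(\beta_nz)$ lies in $[-M-o(1),\,-m+o(1)]$ locally uniformly. The limit $w$ then satisfies $-\Mplus(D^2w)\le-mw^p$ (taking limits of subsolution inequalities with bounded coefficients), with $0\le w\le1=w(0)$. The constant test function $1$ touches $w$ from above at $0$ and yields $0\le-m$, which is absurd. The same two cases, with $\partial Z_-$ in place of $\Gamma_-$, give the second assertion.

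The main obstacle is Case (i): the weight has no exact power profile, so there is no limiting equation and Theorem \ref{thm:weighted-liouville} is unnecessary. The argument must instead rest on the sign of the coefficient, carried through only as a one-sided viscosity inequality in the limit. It also depends on $0$ remaining at bounded distance from the strictly negative region, which is exactly what the boundedness of $\delta_n/\rho_n$ provides.
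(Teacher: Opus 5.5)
Your proposal is correct and follows the paper's argument: the same scale $\rho_n=M_n^{-(p-1)/(2+\gamma_4)}$, the same dichotomy on $\delta_n/\rho_n$ with the secondary rescaling by $\beta_n$, and the strong maximum principle plus a constant test function to reach the sign contradiction. In Case (ii) you are right that the strong maximum principle is not needed, since $w\le 1=w(0)$ already lets the constant $1$ touch from above at $0$.

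In Case (i), drop the claim that $a_n$ converges locally uniformly off the hyperplane. It can fail, because $a/d_{\Gamma_-}^{\gamma_4}$ need not have a limit at $\Gamma_-$. Instead, do what the paper does and pass to the limit with the continuous upper bound $a_n(y)\le -m\bigl((\delta_n/\rho_n+\nu_n\cdot y+o(1))^{+}\bigr)^{\gamma_4}$. This converges locally uniformly to $h(s+\nu_0\cdot y)$ with $h(t)=-m(t^+)^{\gamma_4}$, so $v\equiv 1$ gives $0\le h(s+\nu_0\cdot y)<0$ at any point with $s+\nu_0\cdot y>0$.
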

\begin{proof}
Again we prove this results by method of contradiction. Suppose some subsequence of $\{x_{n}\}$ converges to $x_{0}\in\Gamma_{-}.$ In view of Lemma \eqref{lem:2.5} we may assume $\{x_n\}\subset \overline{\Omega^{-}}.$ Let us set 
\[M_n:=u_n(x_n),~\rho_n:=M_n^\frac{-(p-1)}{(2+\gamma_4)}\] 
and observe that $M_n\to\infty$ and $\rho_n\to0$ as $n\to\infty.$ Again as in the proof of the previous results define 
\[v_{n}(y)=\frac{u_n(x_{n}+\rho_{n}y)}{M_n}~~\text{for}~~y\in\Omega_{n}.\]
Also note that $v_{n}$ solves 
\begin{equation}\label{eq4.61}
-\mathcal{M}^{+}_{\lambda,\Lambda}(D^2v_n)=\rho_n^2\mu_nv_n+a_n(y)v_n^p~~\text{in}~~\Omega_{n},
\end{equation}
where $a_n(y)=a(x_n+\rho_ny)\rho_n^{-\gamma_4}.$ Let $\delta_n=d(x_n,\Gamma_-)=d(x_n,z_n),$ where $z_n\in\Gamma_-.$ We may assume $z_n\to z_0.$ Denote by $\nu_n$ the unit normal of $\Gamma_-$ at $z_n$ pointing inward to $\Omega^-.$ With this notation in hand we can write
\[d_{\Gamma_-}(x_n+\rho_ny)=|\delta_n+\rho_n\nu_n.y+o(\rho_n)|,\]
for large $n$ and all $y\in\Omega^-_{n}$ we have 
\begin{equation}\label{eq4.62}
-a_n(y)/|(\delta_n/\rho_n)+\nu_n.y+o(1)|^{\gamma_4}\in[m,M].
    \end{equation}
\begin{itemize}
\item[(i)] Suppose that $\left\{\frac{\delta_n}{\rho_n}\right\}$ is bounded from above. Then we can choose a subsequence, which we still denote by same sequence, such that $\frac{\delta_n}{\rho_n}\to s\ge0~~\text{as}~~n\to\infty.$ Thus in view of  \eqref{eq4.62}, we get
\[a_n(y)\leq-m[(\delta_n/\rho_n)+\nu_n.y+o(1)]^{\gamma_4}\to h(s+\nu_0.y)\] 
as $n\to\infty,$ where $h(t)=-m(t^+)^{\gamma_4}.$\\
Then by applying Proposition \ref{lem:compactness}, we obtain
\begin{equation*}
\left\{
\begin{aligned}
&\lim\limits_{n\to\infty}v_n=v,~~\text{uniformly in}~~B_R(0)~\forall ~R>0,\\
&\text{and}\\
&v(0)=1.
\end{aligned}
\right.
\end{equation*}
By taking \eqref{eq4.61},\eqref{eq4.62} into account and stability results for viscosity solution we get
\begin{equation*}
\left\{
\begin{aligned}
-\mathcal{M}^{+}_{\lambda,\Lambda}(D^2v)&\leq h(s+\nu_0.y)v^p~~\text{in}~~\mathbb{R}^N,\\
v&\ge0~~\text{in}~~\mathbb{R}^N\\
v(0)&=\max_{\mathbb{R}^N}{v}=1.
\end{aligned}
\right.
\end{equation*}
Again in view translation and rotation invariance property of the Pucci operator, we can translate and rotate the coordinates to get $s+\nu_0.y=-y_1.$ Thus $v$ solves
\begin{equation*}
\left\{
\begin{aligned}
-\mathcal{M}^{+}_{\lambda,\Lambda}(D^2v)&\leq h(-y_1)v^p\leq0~~\text{in}~~\mathbb{R}^N,\\
v&\ge0~~\text{in}~~\mathbb{R}^N\\
v(0)&=\max_{\mathbb{R}^N}{v}=1.
\end{aligned}
\right.
\end{equation*}
By the strong maximum principle, $v$ is constant, i.e., $v\equiv1 $. But this contradicts Theorem 4.1\cite{CutriLeoni2000}.\\
\item[(ii)] Now suppose $\left\{\frac{\delta_n}{\rho_n}\right\}$ is unbounded from above. By choosing a suitable subsequence, we may assume $\beta_n:=(\rho_n/\delta_n)^{{\gamma_4}/2}\to0$ as $n\to\infty.$ Let us denote $z=\beta^{-1}_{n}y$
and $w_n(z):=v_n(\beta_nz).$ With the help of this notation equation \eqref{eq4.61} can be rewritten as follows:
\begin{equation*}
-\mathcal{M}^{+}_{\lambda,\Lambda}(D^2w_n)=(\beta_n\rho_n)^2\mu_nw_n+\beta_n^2a_n(\beta_nz)w_n^p.
\end{equation*}
By using the stability result for viscosity solution we find 
\begin{equation}\label{eq4.64}
\left\{
\begin{aligned}
-\mathcal{M}^{+}_{\lambda,\Lambda}(D^2w)&\leq-mw^p\leq0~~\mathbb{R}^N,\\
w&\ge0~~\text{in}~~\mathbb{R}^N\\
w(0)&=1,
\end{aligned}
\right.
\end{equation}
where we have used 
\[\beta_n^2a_n(\beta_nz)\leq-m[1+\beta_n^{1+2/\gamma_4}\nu_n.z+o(\beta_n)]^{\gamma_4}\to-m~~\text{as}~~n\to\infty.\]
By the strong maximum principle, $w$ is constant, i.e., $w\equiv1 $. Now, Take a constant test function $\phi(x)=1$, since $w(x)\leq w(0)=1$. Therefore, $\phi$ touches $w$ from above at $x=0.$ Since $w$ is a viscosity subsolution of \cref{eq4.64}, Then 
\begin{equation*}
\begin{aligned}
     -\mathcal{M}^{+}_{\lambda,\Lambda}(D^2\phi(0))&\leq-mw(0)^p\\
     0\leq -m
\end{aligned}   
\end{equation*}
which contradicts positivity of $m.$
\end{itemize}    
\end{proof}
\begin{lemma}\label{lem:negative-interior-exclusion}
 Let $K_1,\ldots,K_m\Subset\Omega$ denote the thick neutral
components, so that
\[
K_j\subset\{a=0\},
\qquad j=1,\ldots,m.
\]
Assume that
\[
B<\min_{1\le j\le m}\mu_1^+(K_j^\circ).
\]

Let $\{u_n\}$ be a sequence of positive viscosity solutions of
\begin{equation}
\label{eq:forced-sequence}
-\mathcal M_{\lambda,\Lambda}^+(D^2u_n)
=
\mu_nu_n+a(x)u_n^p
\quad\text{in }\Omega,
\end{equation}
with
\[
u_n=0\quad\text{on }\partial\Omega,
\]
where
\[
\mu_n\le B.
\]

Let $\psi$ be the positive barrier constructed as in
Lemma \ref{lem:positive-barrier}, and choose $x_n\in\Omega$ such that
\[
\frac{u_n(x_n)}{\psi(x_n)}
=
\max_{\overline\Omega}\frac{u_n}{\psi}.
\]
Set
\[
M_n:=u_n(x_n).
\]

Suppose that
\[
M_n\to\infty
\qquad\text{and}\qquad
x_n\to x_0\in\Omega^-.
\]
Then such a sequence cannot exist.
Similarly, if $a(x)<0$ on $\partial\Omega$ Then $x_n$ does not have limiting point in $\partial\Omega.$
\end{lemma}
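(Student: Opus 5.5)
We argue by contradiction, following the template of the interface lemmas above but with the unweighted scaling, since $a$ is bounded away from zero near $x_0$. First consider $x_0\in\Omega^-$. Because $a$ is continuous and $a(x_0)<0$, there is $r_0>0$ with $B_{2r_0}(x_0)\Subset\Omega^-$ and $a\le -c_0<0$ on $B_{2r_0}(x_0)$. Set $\rho_n:=M_n^{-(p-1)/2}\to0$ and $v_n(y):=u_n(x_n+\rho_n y)/M_n$. By \cref{scaled:solution} with $\gamma=0$, $v_n$ solves
\[
 -\Mplus(D^2v_n)=\rho_n^2\mu_nv_n+a(x_n+\rho_ny)v_n^p
\]
on $B_{r_0/\rho_n}(0)$, a family of balls exhausting $\R^N$. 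By \cref{lem:2.5} the $v_n$ are uniformly bounded, so the right-hand sides are locally uniformly bounded; here we use that $\mu_n$ lies in the bounded interval $\Ipar$, so $\rho_n^2\mu_n\to0$. \cref{lem:compactness} then gives a locally uniform limit $v$ with $0\le v\le1$, $v(0)=1$, and, by stability,
\[
 -\Mplus(D^2v)=a(x_0)v^p\le 0\quad\text{in }\R^N .
\]

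Next we exclude this limit. The quickest route is the constant test function argument already used in the $\Gamma_-$ lemma. Since $v\le1=v(0)$, the function $\phi\equiv1$ touches $v$ from above at $0$. The viscosity subsolution inequality then gives $0=-\Mplus(D^2\phi(0))\le a(x_0)v(0)^p=a(x_0)<0$, a contradiction. We can even bypass the limit entirely: at the maximum point $x_n$ of $u_n/\psi$, the function $s_n\psi$ with $s_n=M_n/\psi(x_n)$ touches $u_n$ from above. This yields
\[
 s_n\bigl(-\Mplus(D^2\psi(x_n))\bigr)\le \mu_nM_n+a(x_n)M_n^p .
\]
Dividing by $M_n^p$, the left side is $O(M_n^{1-p})$ since $\psi\in C^2$ near $x_0$, because $x_0$ is an interior point. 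Hence in the limit $0\le a(x_0)<0$. The one genuine check is that $\psi$ is $C^2$ with bounded Hessian near $x_0$, which holds because $x_0\in\Omega$ lies at positive distance from $\partial\Omega$.

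The boundary statement is the main obstacle. Suppose $a<0$ on $\partial\Omega$ and $x_n\to x_0\in\partial\Omega$. The direct test-function argument fails there, because $\psi$ is only $C^2(\Omega)$ and $D^2\psi$ need not stay bounded up to the boundary; by construction, however, $\psi\equiv c$ outside the cutoff supports, which are compactly contained in $\Omega$. So near $\partial\Omega$ we have $\psi\equiv c$, and $x_n$ is a genuine local maximum of $u_n$. The one-line argument then goes through with $\phi\equiv M_n$ touching $u_n$ from above at $x_n$: it gives $0\le \mu_nM_n+a(x_n)M_n^p$. For large $n$, $a(x_n)\le -c_0<0$ by continuity of $a$ on $\overline\Omega$, so $M_n^{p-1}\le \mu_n/c_0\le B/c_0$ when $\mu_n>0$, and the inequality fails outright when $\mu_n\le0$. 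Either way this contradicts $M_n\to\infty$. If one instead prefers the blow-up route, the distinction between $d(x_n,\partial\Omega)/\rho_n$ bounded and unbounded would produce a half-space limit handled as above or by \cref{thm:half-space}; the direct argument avoids this.
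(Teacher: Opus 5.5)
Your proof is correct. Your direct test-function route is at heart the paper's proof: test the subsolution inequality at $x_n$ with $s_n\psi$, where $s_n=M_n/\psi(x_n)$, and contradict $0\le \mu_n+a(x_n)M_n^{p-1}$ using $a(x_n)\le-\delta$, $\mu_n\le B$ and $M_n\to\infty$. Your boundary case, where $\psi\equiv c$ near $\partial\Omega$ so that $x_n$ is a local maximum of $u_n$, is exactly the paper's Case 2.

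The real difference is how you dispose of the term $-\Mplus(D^2\psi(x_n))$. The paper makes it vanish by re-choosing the domains $D_j$ in the barrier construction so that they miss $\overline{B(x_0,r)}$, which gives $\psi\equiv c$ on $B(x_0,r)$. But $x_n$, and hence $x_0$, were defined as maximizers of $u_n/\psi$ for a $\psi$ already fixed. Altering $\psi$ after $x_0$ is known is therefore circular as written. Moreover, points of $\Omega^-$ close to a thick component such as $Z_-$ do lie where the fixed $\psi$ equals the eigenfunction $\phi_j$, so $D^2\psi\neq0$ there.

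Your version needs only that $D^2\psi$ is bounded on a compact neighbourhood of $x_0$ in $\Omega$, which holds because $\psi\in C^2(\Omega)$. The Hessian term is then $O(M_n^{1-p})$ after dividing by $M_n^p$. This treats every $x_0\in\Omega^-$ without touching the barrier, so it is the more robust argument.

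Your blow-up route is also valid, but it uses $\mu_n\in\Ipar$, i.e.\ a lower bound on $\mu_n$, to get $\rho_n^2\mu_n\to0$ and locally bounded right-hand sides. The lemma only assumes $\mu_n\le B$, so the direct route is the one that proves the statement as given.
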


\begin{proof}
Since $x_0\in\Omega^-$ and $\Omega^-=\{a<0\}$ is open, there exists
$r>0$ such that
\begin{equation}
\label{eq:negative-ball}
\overline{B(x_0,r)}\subset\Omega^-.
\end{equation}

Since every thick neutral component satisfies
\[
K_j\subset\{a=0\},
\]
we have
\[
K_j\cap\overline{B(x_0,r)}=\varnothing,
\qquad j=1,\ldots,m.
\]
Both $K_j$ and $\overline{B(x_0,r)}$ are compact; hence
\[
d_j:=
\operatorname{dist}
\bigl(K_j,\overline{B(x_0,r)}\bigr)>0.
\]

By the spectral continuity of the principal half-eigenvalue under
smooth exterior approximation, the domains $D_j$ in the construction
of Lemma~\ref{lem:positive-barrier} may be chosen sufficiently close
to $K_j$ so that, simultaneously,
\begin{equation}
\label{eq:Dj-choice-negative}
K_j\Subset D_j\Subset\Omega,
\qquad
\mu_1^+(D_j)>B,
\qquad
D_j\cap\overline{B(x_0,r)}=\varnothing.
\end{equation}
Indeed, since
\[
\mu_1^+(D_j)\longrightarrow\mu_1^+(K_j^\circ)
\]
as the smooth exterior domains $D_j$ shrink to $K_j^\circ$, and
\[
B<\mu_1^+(K_j^\circ),
\]
the spectral inequality $\mu_1^+(D_j)>B$ is preserved when $D_j$ is
taken sufficiently close to $K_j$.

Let $\phi_j>0$ be the principal eigenfunction in $D_j$ satisfying
\[
-\mathcal M_{\lambda,\Lambda}^+(D^2\phi_j)
=
\mu_1^+(D_j)\phi_j
\quad\text{in }D_j,
\]
and choose
\[
\chi_j\in C_c^\infty(D_j),
\qquad
0\le\chi_j\le1,
\]
with $\chi_j\equiv1$ in a neighborhood of $K_j$.
For a fixed constant $c>0$, define
\begin{equation}
\label{eq:psi-negative}
\psi
=
\sum_{j=1}^m\chi_j\phi_j
+
c\left(1-\sum_{j=1}^m\chi_j\right).
\end{equation}

Since
\[
\operatorname{supp}\chi_j\subset D_j
\]
and, by \eqref{eq:Dj-choice-negative},
\[
D_j\cap B(x_0,r)=\varnothing,
\]
we have
\[
\chi_j\equiv0
\qquad\text{in }B(x_0,r)
\]
for every $j$. Consequently,
\begin{equation}
\label{eq:psi-constant-negative}
\psi\equiv c
\qquad\text{in }B(x_0,r),
\end{equation}
and hence
\begin{equation}
\label{eq:hessian-zero}
D^2\psi\equiv0
\qquad\text{in }B(x_0,r).
\end{equation}

Since $x_n\to x_0$, for all sufficiently large $n$,
\[
x_n\in B(x_0,r).
\]
At the maximum point of $u_n/\psi$, define
\[
s_n:=\frac{M_n}{\psi(x_n)}.
\]
Then $s_n\psi$ touches $u_n$ from above at $x_n$. Since $u_n$ is a
viscosity subsolution of \eqref{eq:forced-sequence}, we obtain
\[
s_n
\left[
-\mathcal M_{\lambda,\Lambda}^+
(D^2\psi(x_n))
\right]
\le
\mu_nM_n+a(x_n)M_n^p.
\]
Dividing by $M_n>0$ and using
\[
s_n=\frac{M_n}{\psi(x_n)},
\]
gives
\begin{equation}
\label{eq:key-negative-interior}
\frac{
-\mathcal M_{\lambda,\Lambda}^+(D^2\psi(x_n))
}{
\psi(x_n)
}
\le
\mu_n+a(x_n)M_n^{p-1}.
\end{equation}

By \eqref{eq:hessian-zero},
\[
-\mathcal M_{\lambda,\Lambda}^+(D^2\psi(x_n))=0
\]
for all sufficiently large $n$. Moreover,
\[
B(x_0,r)\subset\Omega^-,
\]
for all sufficiently large $n$. Therefore
\eqref{eq:key-negative-interior} reduces to
\begin{equation}
\label{eq:negative-contradiction}
0
\le
\mu_n+a(x_n)M_n^{p-1}.
\end{equation}

Since $a(x_0)<0$ and $a$ is continuous, there exist
$\delta>0$ and $n_0\in\mathbb N$ such that
\[
a(x_n)\le-\delta
\qquad\text{for all }n\ge n_0.
\]
Using also $\mu_n\le B$, we deduce from
\eqref{eq:negative-contradiction} that
\[
0
\le
B-\delta M_n^{p-1}.
\]
Since $p>1$ and $M_n\to\infty$,
\[
B-\delta M_n^{p-1}\longrightarrow-\infty,
\]
which is impossible.

Hence the ratio-maximizing points $x_n$ cannot converge to a point
$x_0\in\Omega^-$.\\
\textbf{Case (2):} If $x_n\to x_0\in\partial\Omega$ with $a(x)<0$ on $\partial\Omega.$ Then by definition of $\psi$, it is constant near $\partial\Omega.$ Then again by \cref{eq:key-negative-interior}, we get a contradiction. Therefore $x_0\notin\partial\Omega$ as well.
\end{proof}

\begin{lemma}
Let $\{x_n\}$ be the sequence defined in Lemma \ref{lem:2.5} and $u_n(x_n)\to\infty $ as $n\to\infty.$ Assume that $p\in(1,\psafe]~~\text{if}~~\NtildeP>2$ and $1<p<\infty~~\text{if}~~\NtildeP\le2$, where $\psafe$ is defined by \eqref{eq:safe-exponent}. If \(K\) is an arbitrary compact subset of \(\Omega^{+}\), then \(K\) contains no limit point of the sequence \(\{x_n\}\).
\end{lemma}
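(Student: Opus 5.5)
We argue by contradiction, using the standard Gidas--Spruck blow-up with the unweighted scale. Suppose that, along a subsequence, $x_n\to x_0\in K\Subset\Omega^+$. Then $a(x_0)>0$, and since $\Omega^+$ is open we may fix $r>0$ with $\overline{B(x_0,2r)}\subset\Omega^+$. Consequently $a\ge a_0>0$ on $B(x_0,2r)$. Set
\[
M_n:=u_n(x_n),\qquad \rho_n:=M_n^{-\frac{p-1}{2}},
\qquad v_n(y):=\frac{u_n(x_n+\rho_n y)}{M_n}.
\]
This is Lemma \ref{scaled:solution} with $\gamma=0$: the same test-function computation shows that $v_n$ is a viscosity solution of
\[
-\Mplus(D^2v_n)=\rho_n^2\mu_n v_n+a(x_n+\rho_n y)v_n^p
\]
in $\Omega_n=\{y:x_n+\rho_ny\in\Omega\}$. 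Since $\rho_n\to0$ and $x_n$ stays a distance at least $r$ from $\partial\Omega$ for large $n$, the domains $\Omega_n$ contain $B_{r/\rho_n}(0)$, and these balls exhaust $\R^N$.

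Next we invoke Lemma \ref{lem:2.5} with $r_n=\rho_n$. It gives a uniform bound $0\le v_n\le \max\psi/\min\psi$, and any locally uniform limit satisfies $0\le v\le1$ and $v(0)=1$. The right-hand sides $g_n=\rho_n^2\mu_nv_n+a(x_n+\rho_ny)v_n^p$ are uniformly bounded, because $\mu_n$ lies in the bounded interval $\Ipar$, $a$ is bounded, and the $v_n$ are bounded. Proposition \ref{lem:compactness} therefore yields, along a subsequence, locally uniform convergence $v_n\to v$. The coefficient $\rho_n^2\mu_n$ tends to $0$, and $a(x_n+\rho_ny)\to a(x_0)$ locally uniformly by continuity of $a$. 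Hence $g_n\to a(x_0)v^p$ locally uniformly, and stability gives
\[
-\Mplus(D^2v)=a(x_0)v^p\quad\text{in }\R^N,\qquad v\ge0,\quad v(0)=1.
\]

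In particular $v$ is a nonnegative viscosity supersolution of $-\Mplus(D^2v)\ge c\,v^p$ with $c=a(x_0)>0$. Note that $\NtildeP$ coincides with $\NCL$, so $\psafe=\pCL$, and the hypothesis $1<p\le\psafe$ (or $p$ arbitrary when $\NtildeP\le2$) is exactly the range of Theorem \ref{thm:whole-space}. That theorem forces $v\equiv0$, which contradicts $v(0)=1$.

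The main point needing care is the dimensional restriction $N\ge3$ in Theorem \ref{thm:whole-space}. When $N=2$ we have $\NtildeP=\lambda/\Lambda+1\le2$, so we must use the Cutr\`i--Leoni result for the case $\NCL\le2$, in which no nontrivial supersolution exists for any $p>1$. I would cite \cite[Theorem~4.1]{CutriLeoni2000} directly for this case.

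A second point is that the uniform bound on $v_n$ comes from the $\psi$-normalization of Lemma \ref{lem:2.5}, not from $x_n$ being a maximum of $u_n$. This bound is what makes the compactness step legitimate. Beyond these two points, the argument is routine.
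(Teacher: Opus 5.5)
Your proof is correct and follows the paper's argument step for step. It uses the unweighted scale $\rho_n=M_n^{-(p-1)/2}$, the $\psi$-normalized uniform bound from Lemma~\ref{lem:2.5}, and compactness plus stability to reach $-\Mplus(D^2v)=a(x_0)v^p$ in $\R^N$ with $v(0)=1$, which Cutr\`i--Leoni's theorem then excludes. Your remark on the case $\NtildeP\le2$ (which includes $N=2$) is well taken, but the precise input there is \cite[Theorem~3.2]{CutriLeoni2000}, as the paper itself uses in Lemma~\ref{lem3.11}: nonnegative entire supersolutions of $-\Mplus(D^2v)\ge0$ are constant, and the limiting equation then forces that constant to vanish.
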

\begin{proof}
Let $K$ be an arbitrary compact subset of $\Omega^{+}$ and $x_{0}\in K,$ where $x_{0}$ is a limit point of $\{x_{n}\}.$ As in the proof of the previous results, without loss of generality we can assume that $x_{n}\to x_{0}.$ Also setting  $M_n:=u_n(x_n),$ $\rho_n:=M_n^\frac{-(p-1)}{2}$ we find $M_n\to\infty$ and $\rho_n\to0$ as $n\to\infty.$ 
Define 
\[v_n(y):=\frac{u_n(x_{n}+\rho_{n}y)}{M_n}~~\text{for}~~y\in\Omega_{n}.\]
By using the fact that $u_{n}$ solves \eqref{eq:problem1} with $\mu=\mu_{n},$ we find $v_{n}$ satisfies
\begin{equation}\label{change2}
-\mathcal{M}^{+}_{\lambda,\Lambda}(D^2v_n)=\rho_n^2\mu_nv_n+a_n(y)v_n^p~~\text{in}~~\Omega_{n}
\end{equation}
where $a_n(y)=a(x_n+\rho_ny)$ for $x_n+\rho_n y\in\Omega^{+}.$ Given any $R>0,$ it follows from $\rho_n\to0$ and $K\subset\subset\Omega^{+}$ that there exists $n_R$ such that $x_n+\rho_n y\in\Omega^{+}$ for each $n\geq n_R$ and all $y\in\mathbb{R}^N$ with $|y|\leq R+1.$ We also observe that 
\[0<v_n(y)\leq\frac{\psi(x_n+\rho_n y)}{\psi(x_n)}
\leq
\frac{\max_{\overline{\Omega}}\psi}
     {\min_{\overline{\Omega}}\psi}=C_\psi,~~~|y|\leq R+1,~~\text{for}~~n\geq n_R.\]
     where $\psi$ is as in Lemma \ref{lem:2.5}.\\
Then by applying Proposition \ref{lem:compactness}, we get 
\begin{equation*}
\left\{
\begin{aligned}
&\lim\limits_{n\to\infty}v_n=v~~\text{uniformly in}~~B_R(0)~\forall~R>0\\
&\text{and}\\
&v(0)=1
\end{aligned}
\right.
\end{equation*}
By using the above convergence and stability result for viscosity solution we find
\begin{equation*}
\left\{
\begin{aligned}
-\mathcal{M}^{+}_{\lambda,\Lambda}(D^2v)&=a(x_0)v^p ~~~\text{in}~~\mathbb{R}^N\\
v(0)&=1
\end{aligned}
\right.
\end{equation*}
where $a(x_0)>0,$ which contradicts Theorem 4.1 in \cite{CutriLeoni2000}.
\end{proof}
\begin{lemma}\label{lem3.11}
Let $\{x_n\}$ be the sequence defined in Lemma \ref{lem:2.5} and $u_n(x_n)\to\infty $ as $n\to\infty.$ Assume that $p\in(1,\psafe]~~\text{if}~~\NtildeP>2$ and $1<p<\infty~~\text{if}~~\NtildeP\le2$, where $\psafe$ is given by \eqref{eq:safe-exponent}. Let $I\subset\mathbb R$ be a bounded interval with $\mu_n\in I$.Let $\psi$ be the fixed positive barrier from Lemma \ref{lem:2.5},
Suppose further that $a(x)>0$ on $\partial\Omega.$ Then $\partial\Omega$ contains no limit point of the sequence $\{x_n\}.$
\end{lemma}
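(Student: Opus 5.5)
We argue by contradiction, following the standard Gidas--Spruck boundary blow-up. Suppose a subsequence satisfies $x_n\to x_0\in\partial\Omega$. Since $a>0$ on $\partial\Omega$ and $a$ is continuous, there is $r>0$ with $a\ge a_0>0$ on $\overline{B(x_0,r)}\cap\overline\Omega$. Moreover, since $\OmZ\Subset\Omega$, the barrier $\psi$ of Lemma \ref{lem:positive-barrier} is constant near $\partial\Omega$. Set $M_n=u_n(x_n)$, which tends to $\infty$ by \eqref{max:inequality}, and take the unweighted scale $\rho_n=M_n^{-(p-1)/2}$, i.e.\ Lemma \ref{scaled:solution} with $\gamma=0$. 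The rescaled functions $v_n(y)=u_n(x_n+\rho_n y)/M_n$ then solve
\[
-\Mplus(D^2v_n)=\rho_n^2\mu_nv_n+a(x_n+\rho_ny)v_n^p \quad\text{in }\Omega_n .
\]
By Lemma \ref{lem:2.5}, $0\le v_n\le C_\psi$ and $v_n(0)=1$. Because $\mu_n$ lies in the bounded interval $I$, we have $\rho_n^2\mu_n\to0$, and $a(x_n+\rho_ny)\to a(x_0)>0$ locally uniformly.

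Let $\delta_n=d(x_n,\partial\Omega)$. We split into two cases according to the behavior of $\delta_n/\rho_n$.

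\emph{Case 1: $\delta_n/\rho_n\to\infty$ along a subsequence.} Then $\Omega_n$ exhausts $\R^N$. Proposition \ref{lem:compactness} gives a locally uniform limit $v$ with $0\le v\le1$, $v(0)=1$, solving $-\Mplus(D^2v)=a(x_0)v^p$ in $\R^N$. This contradicts \cref{thm:whole-space}, since $p\le\psafe=\pCL$.

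\emph{Case 2: $\delta_n/\rho_n\to s\in[0,\infty)$.} After a rotation sending the inner normal at the nearest boundary point $z_n$ to $e_N$, the domains $\Omega_n$ converge in uniformly $C^2$ local charts to the half-space $H_s=\{y_N>-s\}$. This uses that $\partial\Omega$ is $C^2$, so the flattening error is $O(\rho_n|y|^2)$. The boundary part of Proposition \ref{lem:compactness} then yields convergence locally up to $\partial H_s$, with $v=0$ there. The limit satisfies $-\Mplus(D^2v)=a(x_0)v^p$ in $H_s$, $0\le v\le 1$ and $v(0)=1$.

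We must rule out $s=0$: then $0\in\partial H_s$ would give $v(0)=0$, contradicting $v(0)=1$. To make this rigorous we need a uniform boundary H\"older (or Lipschitz) estimate for $v_n$ near the flattened boundary, with right-hand side bounded by $(|\mu_n|\rho_n^2+\|a\|_\infty)C_\psi^p$. Such an estimate gives $1=v_n(0)\le C(\delta_n/\rho_n)^\alpha\to0$, which is impossible; hence $s>0$. Translating by $s e_N$, we obtain a bounded nontrivial solution of the half-space problem \eqref{eq:half-space-liouville-problem}. This contradicts \cref{thm:half-space}, provided $p\le\pHS$.

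The main obstacle is the exponent compatibility in this last step. The lemma only assumes $p\le\psafe=\NtildeP/(\NtildeP-2)$, while \cref{thm:half-space} requires $p\le\pHS$. We would first verify directly that $\pCL\le\pHS$. Writing $\kappa=\lambda/\Lambda$, the inequality $\frac{\kappa(N-1)+1}{\kappa(N-1)-1}\le\frac{\kappa(N-2)+1}{\kappa(N-2)-1}$ holds, since $t\mapsto (t+1)/(t-1)$ is decreasing for $t>1$ and $\kappa(N-1)>\kappa(N-2)$. The case $\lambda(N-2)\le\Lambda$ gives $\pHS=\infty$ and is trivial. So $\psafe\le\pHS$ and the half-space theorem applies. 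A secondary technical point is the uniform-in-$n$ boundary regularity that forces $s>0$. We would justify it with a standard local barrier in a tangent exterior ball, using the uniform exterior sphere condition of the $C^2$ domain together with the fixed bound $v_n\le C_\psi$.
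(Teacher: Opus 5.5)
Your proposal is correct, and its skeleton matches the paper's. You use the scale $\rho_n=M_n^{-(p-1)/2}$, split according to the behaviour of $\delta_n/\rho_n$, apply the whole-space Liouville theorem when $\delta_n/\rho_n\to\infty$, and use a uniform boundary H\"older estimate to exclude $\delta_n/\rho_n\to0$.

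The genuine difference is the case $\delta_n/\rho_n\to s\in(0,\infty)$. You close it with the half-space Liouville theorem, \cref{thm:half-space}, which forces the exponent comparison $\psafe\le\pHS$. Your check via the monotonicity of $t\mapsto(t+1)/(t-1)$ is right. When $\NtildeP\le2$ one has $\lambda(N-2)<\Lambda$, so $\pHS=+\infty$.

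The paper instead recenters at $z_n$ and keeps the normalization $0\le w\le1$, $w(se_N)=1$, inherited from the maximization of $u_n/\psi$. It then uses only the half-space monotonicity theorem \cite[Theorem~4]{charro2009multiplicity}: $w$ is strictly increasing in $y_N$, so $w((s+t)e_N)>1$ for $t>0$, contradicting $w\le1$. That argument needs no restriction on $p$ in this case; the hypothesis $p\le\psafe$ enters only through the whole-space case.

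Your route, conversely, does not need to know that the limit attains its maximum at an interior point, only that it is bounded and nontrivial. But it inherits the assumptions of \cref{thm:half-space} as recorded in the paper, including $N\ge3$. So for $N=2$ you should add a remark. There $\pHS=+\infty$ and the conclusion still holds: by monotonicity in $y_N$, the limit as $y_N\to\infty$ is a bounded concave function on $\R$, hence a constant, which the equation forces to be zero.

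Since the half-space Liouville theorem is itself proved by this monotonicity followed by a reduction to $\R^{N-1}$, the paper's argument amounts to using the first half of that proof. It replaces the second half by the normalization $w\le w(se_N)$.
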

\begin{proof}
We adapt the boundary blow-up argument of
\cite[Proposition~12]{charro2009multiplicity}.

Suppose, to the contrary, that, after passing
to a subsequence,
\[
x_n\longrightarrow x_0\in\partial\Omega,
\qquad
M_n\longrightarrow+\infty.
\]
Write
\[
a_0:=a(x_0)>0,
\qquad
C_\mu:=\sup_n|\mu_n|<\infty.
\]
The maximality of $u_n/\psi$ gives
\begin{equation}\label{eq:lemma514-ratio}
\frac{u_n(x)}{M_n}
\le
\frac{\psi(x)}{\psi(x_n)}
\qquad (x\in\overline{\Omega}).
\end{equation}

Define
\[
\rho_n:=M_n^{-(p-1)/2},
\qquad
\Omega_n:=\frac{\Omega-x_n}{\rho_n},
\qquad
v_n(y):=\frac{u_n(x_n+\rho_n y)}{M_n}.
\]
Then $\rho_n\to0$ and
$\rho_n^2M_n^{p-1}=1$.
Positive homogeneity of the Pucci operator
yields, in the viscosity sense,
\begin{equation}\label{eq:lemma514-scaled}
\begin{cases}
-\mathcal{M}_{\lambda,\Lambda}^{+}(D^2v_n)
=
\rho_n^2\mu_n v_n
+a(x_n+\rho_n y)v_n^p
& \text{in }\Omega_n,\\
v_n=0
& \text{on }\partial\Omega_n.
\end{cases}
\end{equation}
Indeed, if $\phi$ touches $v_n$ from above
or below at $y_*$, then
\[
\Phi(x):=
M_n\phi\left(\frac{x-x_n}{\rho_n}\right)
\]
touches $u_n$ in the same manner at
$x_n+\rho_n y_*$.
Since
\[
D^2\Phi
=
\frac{M_n}{\rho_n^2}D^2\phi,
\]
the corresponding viscosity inequality for
\eqref{eq:problem1}, divided by
$M_n/\rho_n^2$, is precisely the required
inequality for \eqref{eq:lemma514-scaled}.

By \eqref{eq:lemma514-ratio},
\begin{equation}\label{eq:lemma514-bound}
0\le v_n(y)
\le
\frac{\psi(x_n+\rho_n y)}{\psi(x_n)}
\le C_\psi,
\qquad
v_n(0)=1,
\end{equation}
where
\[
C_\psi:=
\frac{\max_{\overline{\Omega}}\psi}{\min_{\overline{\Omega}}\psi}.
\]
Uniform continuity of $\psi$ and its positive
minimum imply that
\begin{equation}\label{eq:lemma514-ratio-limit}
\frac{\psi(x_n+\rho_n y)}{\psi(x_n)}
\longrightarrow1
\end{equation}
uniformly on every fixed bounded set of
admissible points $y$.
Moreover,
\begin{equation}\label{eq:lemma514-parameter}
\rho_n^2\mu_n\longrightarrow0.
\end{equation}
For large $n$, the right-hand sides of
\eqref{eq:lemma514-scaled} have the uniform bound
\[
\left|
\rho_n^2\mu_n v_n
+a(x_n+\rho_n y)v_n^p
\right|
\le
C_\mu C_\psi
+\|a\|_{L^\infty(\Omega)}C_\psi^p.
\]

Set
\[
\delta_n:=
\operatorname{dist}(x_n,\partial\Omega),
\qquad
d_n:=\frac{\delta_n}{\rho_n}.
\]
Although $\delta_n\to0$, the quotient $d_n$
need not tend to zero.
Passing to a further subsequence, there are
three possibilities.

\medskip
\noindent
\textit{Case 1: $d_n\to+\infty$.}

For every $R>0$, eventually
$\delta_n>R\rho_n$, and hence
$B_R(0)\subset\Omega_n$.
The interior H\"older estimates,
\eqref{eq:lemma514-bound}, and a diagonal
extraction provide a locally uniform limit
$v$ on $\mathbb{R}^N$.
By \eqref{eq:lemma514-bound} and
\eqref{eq:lemma514-ratio-limit},
\[
0\le v\le1
\quad\text{in }\mathbb{R}^N,
\qquad
v(0)=1.
\]
Since
$a(x_n+\rho_n y)\to a_0$
locally uniformly, viscosity stability and
\eqref{eq:lemma514-parameter} imply
\begin{equation}\label{eq:lemma514-entire}
-\mathcal{M}_{\lambda,\Lambda}^{+}(D^2v)=a_0v^p
\qquad\text{in }\mathbb{R}^N.
\end{equation}

The whole-space Liouville theorem forces
$v\equiv0$ in the stated exponent range,
a contradiction.
For $\widetilde N_+>2$, the precise
maximal-Pucci input is
\cite[Remark~8 following Theorem~4.1]{CutriLeoni2000}.
For $\widetilde N_+\le2$,
\cite[Theorem~3.2]{CutriLeoni2000}
makes every nonnegative entire supersolution
of $-\mathcal{M}_{\lambda,\Lambda}^{+}(D^2v)\ge0$ constant, and
\eqref{eq:lemma514-entire} makes this
constant zero.
The coefficient $a_0$ is inessential,
since $a_0^{1/(p-1)}v$ solves the equation
with coefficient one.
In dimension one, the same conclusion
follows directly from concavity.

\medskip
\noindent
\textit{Boundary coordinates for the remaining cases.}

Suppose that $d_n\to s\in[0,\infty)$.
The $C^2$ boundary has a tubular neighborhood
in which the nearest boundary point is unique.
Thus, for large $n$, there exist
$z_n\in\partial\Omega$ and inward unit normals
$\nu_n$ at $z_n$ such that
\[
x_n=z_n+\delta_n\nu_n,
\qquad
z_n\longrightarrow x_0.
\]
Choose orthogonal matrices $Q_n$ with
$Q_ne_N=\nu_n$, and define
\[
\widehat\Omega_n
:=
\{y\in\mathbb{R}^N:
z_n+\rho_nQ_ny\in\Omega\},
\qquad
w_n(y):=
\frac{u_n(z_n+\rho_nQ_ny)}{M_n}.
\]
Orthogonal invariance and positive homogeneity
yield
\begin{equation}\label{eq:lemma514-boundary-scaled}
\begin{cases}
-\mathcal{M}_{\lambda,\Lambda}^{+}(D^2w_n)
=
\rho_n^2\mu_nw_n
+a(z_n+\rho_nQ_ny)w_n^p
& \text{in }\widehat\Omega_n,\\
w_n=0
& \text{on }\partial\widehat\Omega_n.
\end{cases}
\end{equation}
Furthermore,
\begin{equation}\label{eq:lemma514-boundary-values}
0\le w_n\le C_\psi,
\qquad
w_n(0)=0,
\qquad
w_n(d_ne_N)=1.
\end{equation}

In the coordinates centered at $z_n$ and
rotated by $Q_n$, the boundary is locally a
graph $\xi_N=g_n(\xi')$, with the interior
above it, where
\[
g_n(0)=0,
\qquad
Dg_n(0)=0,
\qquad
\|D^2g_n\|_\infty\le C
\]
on a fixed neighborhood, uniformly in $n$.
The rescaled graph is
\[
y_N=G_n(y'),
\qquad
G_n(y'):=
\rho_n^{-1}g_n(\rho_n y').
\]
For every fixed $R>0$, Taylor's theorem gives
\[
\|G_n\|_{C^2(B_R')}
\le C_R\rho_n\longrightarrow0.
\]
Consequently, $\widehat\Omega_n$ converges
locally, with uniformly controlled boundary
charts, to the half-space
\[
H:=\{y_N>0\}.
\]

\medskip
\noindent
\textit{Case 2: $d_n\to0$.}

The uniform bound in
\eqref{eq:lemma514-boundary-values},
the uniform bound on the right-hand side of
\eqref{eq:lemma514-boundary-scaled},
and the uniformly controlled boundary charts
imply a uniform boundary H\"older estimate;
see \cite[Theorem~2.3]{QuaasSirakov2006}.
Thus, there exist $C>0$ and
$\alpha\in(0,1)$, independent of $n$, such that
\[
|w_n(y)-w_n(0)|
\le C|y|^\alpha
\qquad
\left(
y\in
\overline{\widehat\Omega_n}\cap B_1(0)
\right).
\]
For large $n$, $d_ne_N\in B_1(0)$, so
\eqref{eq:lemma514-boundary-values} gives
\[
1
=
|w_n(d_ne_N)-w_n(0)|
\le Cd_n^\alpha
\longrightarrow0,
\]
a contradiction.

\medskip
\noindent
\textit{Case 3: $d_n\to s\in(0,\infty)$.}

The interior and boundary H\"older estimates,
followed by a diagonal extraction, give a
limit $w\in C(\overline H)$, with local uniform
convergence in $H$ and convergence up to the
limiting boundary.
The zero boundary data pass to the limit.
Since $d_ne_N\to se_N\in H$,
\eqref{eq:lemma514-boundary-values} gives
\[
w(se_N)=1.
\]
By \eqref{eq:lemma514-ratio},
\[
w_n(y)
\le
\frac{\psi(z_n+\rho_nQ_ny)}{\psi(x_n)}.
\]
Both arguments of $\psi$ tend to $x_0$,
uniformly on fixed bounded sets.
Hence $0\le w\le1$ in $H$.
Finally, continuity of $a$,
\eqref{eq:lemma514-parameter},
and viscosity stability yield
\begin{equation}\label{eq:lemma514-half-space}
\begin{cases}
-\mathcal{M}_{\lambda,\Lambda}^{+}(D^2w)=a_0w^p
& \text{in }H,\\
0\le w\le1
& \text{in }H,\\
w=0
& \text{on }\partial H,\\
w(se_N)=1.
\end{cases}
\end{equation}

We now apply \cite[Theorem~4]{charro2009multiplicity}
with
\[
G(X):=-\mathcal M_{\lambda,\Lambda}^{+}(X),
\qquad f(t):=a_0t^p.
\]
The operator $G$ is uniformly elliptic, positively
homogeneous, and invariant under orthogonal
transformations. Moreover, since $p>1$ and $a_0>0$,
the function $f$ is locally Lipschitz on $[0,\infty)$
and satisfies $f(0)=0$.

The function $w$ is a bounded, nontrivial,
nonnegative viscosity solution of
\[
G(D^2w)=f(w)\quad\text{in }\mathbb R^N_+,
\qquad
w=0\quad\text{on }\partial\mathbb R^N_+.
\]
Therefore, Theorem~4 implies that $w$ is strictly
increasing in the $y_N$-direction. Consequently,
for every $t>0$,
\[
w((s+t)e_N)>w(se_N)=1,
\]
contradicting $w\le1$ in $\mathbb R^N_+$.
This excludes the case $d_n\to s\in(0,\infty)$.\\
All possible behaviors of $d_n$ have been
excluded.
Thus $\partial\Omega$ contains no limit
point of $\{x_n\}$.
\end{proof}
We are now in a position to prove ~\cref{thm:apriori}.

\begin{proof}[Proof of \cref{thm:apriori}]
Arguing by contradiction, suppose that there exists a sequence of positive
solutions $\{u_n\}$ of \cref{eq:problem1}, with $\mu=\mu_n\in\Ipar$, such that
\[
\|u_n\|_\infty\to\infty.
\]

By Proposition \ref{prop:necessary-spectral-restriction}, we have $\mu_n<\sigma_*$. Moreover, by
Lemma \ref{lem:2.5}, the maximum points $x_n$ of $\frac{u_n}{\psi}$ with $u_n(x_n):=M_n\to\infty$ (by \cref{max:inequality}) may be chosen so that
\[
    x_n\in\overline{\Omega^-}\cup\overline{\Omega^+}.
\]
Since $\overline{\Omega}$ is compact, after passing to a subsequence, we may
assume that
\[
    x_n\longrightarrow x_0\in\overline{\Omega}.
\]
As $\overline{\Omega^-}\cup\overline{\Omega^+}$ is closed, it follows that
\[
    x_0\in\overline{\Omega^-}\cup\overline{\Omega^+}.
\]
On the other hand, Lemma \ref{lemma3.3} to \ref{lem3.11} exclude all possible limit
points of $\{x_n\}$ in
$\overline{\Omega^-}\cup\overline{\Omega^+}$. Hence,
\[
    x_0\notin\overline{\Omega^-}\cup\overline{\Omega^+},
\]
which is a contradiction. Therefore, the desired uniform a priori estimate
follows.

\end{proof}
\subsection{Existence of positive solution}
\begin{proposition}
\label{prop:positive-bifurcation}
Let $\Omega\subset\mathbb{R}^{N}$ be a bounded domain with $C^{2}$
boundary, let \cref{eq:problem1} holds
Then $\bigl(\mu_{1}^{+}(\Omega),0\bigr)$ is a bifurcation
point of positive viscosity solutions of
\eqref{eq:problem1}.
\end{proposition}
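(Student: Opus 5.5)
The plan is to apply a Rabinowitz-type local bifurcation argument for positively homogeneous operators, in the spirit of the half-eigenvalue bifurcation theory for Pucci operators used by Quaas–Sirakov and Busca–Esteban–Quaas. Since $-\Mplus$ is not linear, we cannot apply Crandall–Rabinowitz directly. Instead we work with the Leray–Schauder degree on the positive cone, or equivalently on the whole space $C(\overline\Omega)$ restricted to solutions of one sign.

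\textbf{Step 1: solution operator.} For $g\in C(\overline\Omega)$, let $S(g)$ be the unique viscosity solution of $-\Mplus(D^2w)=g$ in $\Omega$ with $w=0$ on $\partial\Omega$. Existence and uniqueness follow from the proper comparison principle, \cref{lem:comparison}. Interior and boundary H\"older estimates (\cref{thm:interior-holder} and the boundary estimate cited from \cite{QuaasSirakov2006}) show that $S$ is compact and continuous on $C(\overline\Omega)$. It is also positively homogeneous and order preserving. Problem \eqref{eq:problem1} becomes the fixed point equation
\[
u=S\bigl(\mu u+a(x)(u^+)^p\bigr)=:\Phi(\mu,u).
\]
Using $(u^+)^p$ in place of $u^p$ extends the problem to all of $C(\overline\Omega)$, and positive fixed points are exactly the solutions of \eqref{eq:problem1}.

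\textbf{Step 2: degree jump across $\mu_1^+(\Omega)$.} We compare $\Phi$ with its positively homogeneous part $u\mapsto S(\mu u)$ near $u=0$. The perturbation satisfies $a(u^+)^p=o(\|u\|_\infty)$, so a homotopy shows that the index of $0$ for $\Phi(\mu,\cdot)$ equals the index of $0$ for $u\mapsto S(\mu u)$ on small balls, whenever $\mu$ is not a half-eigenvalue. For $\mu<\mu_1^+(\Omega)$ with $\mu$ close to it, the homotopy $t\mapsto S(t\mu u)$, $t\in[0,1]$, has no nontrivial fixed point, because $\mu_1^{\pm}(\Omega)$ are the smallest half-eigenvalues (\cref{prop:first_halfeigenvalue}). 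Hence the index is $1$. For $\mu$ slightly above $\mu_1^+(\Omega)$ we restrict to the cone of nonnegative functions, to avoid the other half-eigenvalue $\mu_1^-$. We add a small multiple of the eigenfunction $\phi_1^+$ to show the fixed-point problem $u=S(\mu u)+t\phi_1^+$ has no nonnegative solution. That nonexistence follows from characterization (iv) of \cref{prop:first_halfeigenvalue}: a positive $u$ with $-\Mplus(D^2u)\ge\mu u$ and $\mu>\mu_1^+$ is impossible. So the cone index is $0$.

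\textbf{Step 3: conclusion.} The index jumps from $1$ to $0$ as $\mu$ crosses $\mu_1^+(\Omega)$. The standard argument then produces a sequence $(\mu_n,u_n)$ of nontrivial nonnegative fixed points with $\mu_n\to\mu_1^+(\Omega)$ and $u_n\to0$. Positivity of these solutions follows from the strong maximum principle (\cref{lem:smp-hopf}), after writing the equation as $\Mminus(D^2u)+c(x)u\le0$ with a bounded coefficient $c$. Normalizing $u_n/\|u_n\|_\infty$ and passing to the limit via \cref{lem:compactness} yields a positive multiple of $\phi_1^+$, which confirms that the branch emanates from $(\mu_1^+(\Omega),0)$ in the positive direction. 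This is a bifurcation point in the sense required.

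\textbf{Main obstacle.} The hardest step is the index computation on the cone for $\mu>\mu_1^+(\Omega)$, since $S$ is nonlinear and the index cannot be read off from the spectrum. I would first try the $t\phi_1^+$-shift argument described in Step 2, relying on homogeneity and the maximum principle from \cref{rem:2.2}. If that fails, I would cite the corresponding result of Busca–Esteban–Quaas for Pucci half-eigenvalues directly.
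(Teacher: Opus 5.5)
Your route differs from the paper's. The paper computes no index: it checks that $\mathcal N(u)=a(x)(u^+)^p$ is continuous with $\mathcal N(u)=o(\|u\|_\infty)$, and then invokes Theorem~1.3 of \cite{MR2124162}. That citation gives the local bifurcation and also the global continuum $\mathcal C^+$ with the Rabinowitz alternative, which is later needed in \cref{thm:existence-full-interval}; your fallback of citing Busca--Esteban--Quaas is exactly the paper's proof. You instead reprove the local statement by an index jump. This is self-contained, and working in the cone spares you any information on $\mu_1^-(\Omega)$ or on the isolation of $\mu_1^+(\Omega)$ in the full spectrum. The price is that you only obtain local bifurcation. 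Recovering the global alternative would need a cone version of the global theorem (Dancer) or a whole-space degree computation, which is what the citation supplies.

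As written, however, Step~3 compares two different quantities. The index $1$ for $\mu<\mu_1^+(\Omega)$ is computed in $C(\overline\Omega)$, while the index $0$ for $\mu>\mu_1^+(\Omega)$ is a cone index, and homotopy invariance cannot connect them. Do both computations in the cone $P$ of nonnegative functions.

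First, because $a$ changes sign, $\Phi(\mu,\cdot)$ does not map $P$ into itself. It does map $P\cap\overline{B_r}$ into $P$ when $\mu$ is near $\mu_1^+(\Omega)>0$ and $r$ is small, since then $\mu u+au^p\ge(\mu-\|a\|_\infty r^{p-1})u\ge0$; this suffices for a local index.

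For $\mu<\mu_1^+(\Omega)$, the homotopy $S(t\mu\,\cdot)$ preserves $P$. By the strong maximum principle, a nontrivial nonnegative fixed point would be a positive eigenfunction with eigenvalue $t\mu\ne\mu_1^+(\Omega)$, contradicting simplicity in \cref{prop:first_halfeigenvalue}. Combined with your normalization argument to pass from $S(\mu\,\cdot)$ to $\Phi(\mu,\cdot)$, this gives cone index $1$.

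For $\mu>\mu_1^+(\Omega)$, make explicit why a fixed point of $u=\Phi(\mu,u)+t\phi_1^+$ in $P\cap\overline{B_r}$ is a positive supersolution. Set $w:=u-t\phi_1^+$; it solves $-\Mplus(D^2w)=\mu u+au^p\ge0$, so $w\ge0$ and $u\ge t\phi_1^+$. Subadditivity of $\Mplus$ then gives $-\Mplus(D^2u)\ge\mu u+au^p+t\mu_1^+(\Omega)\phi_1^+\ge(\mu-\|a\|_\infty r^{p-1})u$. Choose $r$ with $\mu-\|a\|_\infty r^{p-1}>\mu_1^+(\Omega)$. Characterization (iv) then excludes such $u$ for $t>0$, and, with the strong maximum principle, also for $t=0$ on the relative boundary. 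Hence the cone index is $0$, and with these repairs the jump argument is complete.
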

\begin{proof}
 Define
\[
\mathcal{N}:C_0(\overline{\Omega})
\longrightarrow C(\overline{\Omega}),
\qquad
\mathcal{N}(u)(x)=a(x)(u^{+}(x))^{p}.
\]
Since $a\in C(\overline{\Omega})$ and $p>1$,
$\mathcal{N}$ is continuous. Moreover,
\[
\frac{|\mathcal{N}(u)|}
     {|u|}
\leq
|a|
|u|^{p-1}
\longrightarrow0
\]
as $|u|\to0$.
Thus
\[
\mathcal{N}(u)=o(|u|)
\quad\text{in }C(\overline{\Omega}).
\]   
Then, by Theorem 1.3 \cite{MR2124162}, $\bigl(\mu_{1}^{+}(\Omega),0\bigr)$ is a bifurcation
point of positive  solutions to
\eqref{eq:problem1}. Moreover, there exists a closed connected set
$\mathcal{C}^{+}\subset\mathbb{R}\times C_{0}(\overline{\Omega})$
such that
\begin{enumerate}
\item
\[
\bigl(\mu_{1}^{+}(\Omega),0\bigr)
 \in\mathcal{C}^{+};
\]

\item every
\[
(\mu,u)\in
\mathcal{C}^{+}\setminus
\bigl\{(\mu_{1}^{+}(\Omega),0)\bigr\}
\]
satisfies
\[
u>0\quad\text{in }\Omega;
\]

\item either $\mathcal{C}^{+}$ is unbounded in
$\mathbb{R}\times C_{0}(\overline{\Omega})$, or it contains
a point $(\widehat{\mu},0)$, where
$\widehat{\mu}\neq\mu_{1}^{+}(\Omega)$ is an eigenvalue of
$-\mathcal{M}_{\lambda,\Lambda}^{+}$.
\end{enumerate}
\end{proof}
\begin{theorem}
\label{thm:existence-full-interval}

Set $\kappa=\mu_1^+(\Omega)$ and
\[
X=\{u\in C(\overline\Omega):u=0\text{ on }\partial\Omega\},
\qquad
\mathcal S_+=\{(\mu,u)\in\mathbb R\times X:
 u\text{ solves \eqref{eq:problem1}},\ u>0\text{ in }\Omega\}.
\]
All solutions are understood in the viscosity sense.

Assume that the positive global continuum in Proposition \ref{prop:positive-bifurcation} is
available in the following precise form: there exists a nontrivial
closed connected set
$\mathcal C^+\subset\overline{\mathcal S_+}$ containing $(\kappa,0)$,
which is either unbounded in $\mathbb R\times X$ or contains a point
$(\widehat\mu,0)$ with $\widehat\mu\ne\kappa$.
The closure is taken in $\mathbb R\times X$.
Suppose also that:
\begin{enumerate}
\item[(P1)] There exists $\sigma\in\mathbb R$ such that
 $\mu<\sigma$ for every $(\mu,u)\in\mathcal S_+$.
\item[(P2)] For every bounded closed interval $I\subset\mathbb R$
 there exists $C_I>0$ such that
 \[
 \|u\|_{L^\infty(\Omega)}\le C_I
 \quad\text{for every }(\mu,u)\in\mathcal S_+
 \text{ with }\mu\in I.
 \]
\end{enumerate}
Then, for every $\mu<\mu_1^+(\Omega)$,
problem \eqref{eq:problem1} has a positive viscosity solution.
\end{theorem}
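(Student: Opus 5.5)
The strategy is the standard connectedness argument along the continuum $\mathcal C^+$ given in the hypothesis: show that $\mathcal C^+$ is unbounded, show that it cannot escape to infinity in the $u$-direction over bounded $\mu$-ranges, and deduce that its $\mu$-projection must reach $-\infty$.

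\textbf{Step 1: $\mathcal C^+$ meets the trivial line only at $(\kappa,0)$.} Suppose $(\widehat\mu,0)\in\mathcal C^+$ with $\widehat\mu\ne\kappa$. Then there are $(\mu_n,u_n)\in\mathcal S_+$ with $\mu_n\to\widehat\mu$ and $\|u_n\|_\infty\to0$. Normalize $z_n=u_n/\|u_n\|_\infty$. Each $z_n$ solves
\[
-\Mplus(D^2z_n)=\mu_nz_n+a(x)u_n^{p-1}z_n ,
\]
and the right-hand side is uniformly bounded. Proposition \ref{lem:compactness}, together with the boundary H\"older estimate, gives $z_n\to z$ uniformly on $\overline\Omega$. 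The limit satisfies $z\ge0$, $\|z\|_\infty=1$, and
\[
-\Mplus(D^2z)=\widehat\mu z,\qquad z=0\text{ on }\partial\Omega .
\]
By the strong maximum principle (Remark \ref{rem:2.2}), $z>0$. The simplicity statement in Proposition \ref{prop:first_halfeigenvalue}(i) then forces $\widehat\mu=\mu_1^+(\Omega)=\kappa$, a contradiction. Hence the second alternative fails and $\mathcal C^+$ is unbounded in $\mathbb R\times X$.

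\textbf{Step 2: the $\mu$-projection of $\mathcal C^+$ is unbounded below.} By (P1), every point of $\mathcal C^+\subset\overline{\mathcal S_+}$ has $\mu\le\sigma$. Suppose the projection were bounded below by some $m$. Then $\mathcal C^+\subset[m,\sigma]\times X$. Applying (P2) on $I=[m,\sigma]$ and passing to the closure, every $(\mu,u)\in\mathcal C^+$ satisfies $\|u\|_\infty\le C_I$. This makes $\mathcal C^+$ bounded, contradicting Step 1. The projection is connected, contains $\kappa$, and is unbounded below, so it contains $(-\infty,\kappa]$.

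\textbf{Step 3: a genuine positive solution for each $\mu<\kappa$.} Fix $\mu<\kappa$ and pick $(\mu,u)\in\mathcal C^+$. We must rule out $u\equiv0$. If $u\equiv0$, then $(\mu,0)$ is a limit of points of $\mathcal S_+$. Repeating the normalization argument of Step 1 produces a positive eigenfunction with eigenvalue $\mu\ne\kappa$, which is impossible. So $u\not\equiv0$, and $u$ is a limit of positive solutions. Stability of viscosity solutions (Proposition \ref{lem:compactness}) shows $u$ solves \eqref{eq:problem1} with $u\ge0$. The strong maximum principle of Remark \ref{rem:2.2}, applied with $c_0=\mu+a u^{p-1}$ bounded, then gives $u>0$, so $(\mu,u)\in\mathcal S_+$.

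\textbf{Main obstacle.} The delicate part is the normalized-limit argument used in Steps 1 and 3. It needs compactness up to the boundary, which comes from Proposition \ref{lem:compactness} and boundary H\"older estimates on the $C^2$ domain. It also needs the observation that the term $a(x)u_n^{p-1}\to0$ uniformly, so that the limit equation is exactly the half-eigenvalue problem. With those in hand, simplicity of $\mu_1^+$ closes both steps.
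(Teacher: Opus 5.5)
Your proposal is correct and follows the paper's proof essentially step for step. You rule out trivial points other than $(\kappa,0)$ by normalization and simplicity of $\mu_1^+$, deduce that $\mathcal C^+$ is unbounded, use (P1)--(P2) to push the $\mu$-projection to $-\infty$, and conclude via stability and the strong maximum principle; the only difference is that the paper first shows nontrivial points of $\overline{\mathcal S_+}$ already lie in $\mathcal S_+$ and then applies (P2) directly, whereas your ``passing to the closure'' needs (P2) on a slightly enlarged interval such as $[m-1,\sigma]$, because the approximating parameters may lie below $m$.
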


\begin{proof}
We first identify the trivial points in $\overline{\mathcal S_+}$.
Suppose $(\widehat\mu,0)\in\overline{\mathcal S_+}$.
Choose $(\mu_n,u_n)\in\mathcal S_+$ with
$\mu_n\to\widehat\mu$ and
$\varepsilon_n:=\|u_n\|_\infty\to0$, and put
$v_n=u_n/\varepsilon_n$.
Positive homogeneity gives
\[
-\mathcal M^+_{\lambda,\Lambda}(D^2v_n)
 =\mu_n v_n+a(x)\varepsilon_n^{p-1}v_n^p
 \quad\text{in }\Omega,
\qquad v_n=0\quad\text{on }\partial\Omega,
\]
with $0\le v_n\le1$ and $\|v_n\|_\infty=1$.
The right-hand sides are uniformly bounded.
The interior and zero-boundary H\"older estimates yield, after passing
to a subsequence, uniform convergence on $\overline\Omega$ to a function
$v\ge0$ with $\|v\|_\infty=1$.
Since $p>1$, by stability of viscosity solutions
\[
-\mathcal M^+_{\lambda,\Lambda}(D^2v)=\widehat\mu v
 \quad\text{in }\Omega,
\qquad v=0\quad\text{on }\partial\Omega.
\]
The strong minimum principle implies $v>0$ in $\Omega$.
The characterization of the positive principal half-eigenvalue
therefore gives $\widehat\mu=\kappa$.

Every nontrivial point of $\overline{\mathcal S_+}$ is a positive
solution, by viscosity stability and the strong minimum principle.
The preceding normalization excludes every trivial point of
$\mathcal C^+$ other than $(\kappa,0)$.
Its assumed global alternative therefore implies that
$\mathcal C^+$ is unbounded.

Suppose that its parameter projection were bounded below by some
$m\in\mathbb R$.
Set $B=\max\{\sigma,\kappa\}$.
By (P1), every point of $\mathcal C^+$ has parameter in the bounded
closed interval $[m,B]$.
By (P2), the amplitudes of all its nontrivial points are bounded
by $C_{[m,B]}$, while its only trivial point has amplitude zero.
Thus $\mathcal C^+$ is bounded in $\mathbb R\times X$, a contradiction.
Consequently,
\[
\inf\pi_\mu(\mathcal C^+)=-\infty,
\qquad \pi_\mu(\mu,u)=\mu.
\]
Since $\mathcal C^+$ is connected and contains $(\kappa,0)$,
its continuous parameter projection is an interval containing
$\kappa$ and unbounded below.
Hence $(-\infty,\kappa)\subset\pi_\mu(\mathcal C^+)$.
For each $\mu<\kappa$, a point $(\mu,u)\in\mathcal C^+$ cannot be
trivial, because its only trivial point is $(\kappa,0)$.
It is therefore a positive solution of
\eqref{eq:problem1}.
\end{proof}

\section*{Declarations}

\subsection*{Ethical Approval}
Not applicable.

\subsection*{Funding}
Mohan Mallick acknowledges the support of the Anusandhan National Research Foundation through grant ANRF/ARGM/2025/002309/MTR. Ram Baran Verma acknowledges the support of the Anusandhan National Research Foundation through grant  \\ ANRF/ARGM/2025/002357/MTR and the National Board for Higher Mathematics through grant 02011/36/2025/NBHM/RP/9466.

\subsection*{Availability of data and materials}
Not applicable. No datasets were generated or analysed during the current study.

\bibliographystyle{amsplain}
\bibliography{references_C1}

\end{document}